\pdfoutput=1
\documentclass[
	english
]{scrartcl}

\usepackage[T1]{fontenc}
\usepackage[utf8]{inputenc}

\usepackage{amsmath,amsfonts,amsthm,amssymb}
\usepackage{xcolor}
\usepackage[colorlinks,citecolor=blue]{hyperref}

\usepackage{preamble}
\usepackage{marginnote}

\usepackage{enumitem}
\setlist{itemsep=0em} 
\setlist[enumerate]{label=(\roman*)}

\usepackage[capitalise,nameinlink]{cleveref}

\usepackage{dsfont}

\usepackage{soul,cancel}

\usepackage{graphicx}

\usepackage{mathtools} 

\newif\ifbiber
\bibertrue

\ifbiber
\usepackage[
	style=authoryear-comp,
	sorting=nyt,
	url=false, 
	doi=true,                
	eprint=true,
	uniquename=allinit,        
	maxbibnames=99,          
	maxcitenames=2,
	uniquelist=minyear,
	dashed=false,            
	sortcites=false,          
	backend=biber,           
	safeinputenc,            
]{biblatex}

\addbibresource{references.bib}

\DeclareCiteCommand{\cite}{%
	\ifbibmacroundef{cite:init}{}{\usebibmacro{cite:init}}\usebibmacro{prenote}%
}{%
	\usebibmacro{citeindex}%
	\printtext[bibhyperref]{\usebibmacro{cite}}%
}{%
	\ifbibmacroundef{cite:init}{\multicitedelim}{}%
}{%
	\usebibmacro{postnote}%
}%
\DeclareCiteCommand{\parencite}[\mkbibbrackets]{%
	\ifbibmacroundef{cite:init}{}{\usebibmacro{cite:init}}\usebibmacro{prenote}%
}{%
	\usebibmacro{citeindex}%
	\printtext[bibhyperref]{\usebibmacro{cite}}%
}{%
	\ifbibmacroundef{cite:init}{\multicitedelim}{}%
}{%
	\usebibmacro{postnote}%
}%

\let\cite\parencite

\DeclareNameAlias{sortname}{last-first}
\DeclareNameAlias{default}{last-first}

\else

\usepackage{doi,natbib}

\fi

\newcommand\norm[1]{\lVert#1\rVert}
\newcommand\bignorm[1]{\bigl\lVert#1\bigr\rVert}

\newcommand\abs[1]{\lvert#1\rvert}
\newcommand\bigabs[1]{\bigl\lvert#1\bigr\rvert}

\newcommand\Bigabs[1]{\Bigl\lvert#1\Bigr\rvert}

\newcommand\dual[2]{\langle #1, #2\rangle}
\newcommand\bigdual[2]{\bigl\langle #1, #2\bigr\rangle}
\newcommand\Bigdual[2]{\Bigl\langle #1, #2\Bigr\rangle}
\newcommand\scalarprod[2]{( #1, #2)}
\newcommand\bigscalarprod[2]{\bigl( #1, #2\bigr)}

\newcommand\N{\mathbb{N}}
\newcommand\Z{\mathbb{Z}}
\newcommand\R{\mathbb{R}}

\renewcommand\d{\mathrm{d}}

\newcommand{\weakly}{\rightharpoonup}
\newcommand{\weaklystar}{\stackrel\star\rightharpoonup}

\newcommand{\dom}{\operatorname{dom}}

\newcommand{\jxa}{_j^{\bar x_0, a_0}}
\newcommand{\jxg}{\texorpdfstring{_j^{x,g}}{jxg}}
\newcommand{\jxp}{_j^{\bar u,-\bar\varphi}}

\newcommand{\anni}{^\perp}

\newcommand{\dualspace}{^\star}
\newcommand{\adjoint}{^\star}

\renewcommand\AA{\mathcal{A}}

\newcommand\DD{\mathcal{D}}
\newcommand\HH{\mathcal{H}}
\newcommand\LL{\mathcal{L}}
\newcommand\TT{\mathcal{T}}

\newcommand\KK{\mathcal{K}}
\newcommand\MM{\mathcal{M}}
\newcommand\NN{\mathcal{N}}
\newcommand\ZZ{\mathcal{Z}}

\newcommand\Uad{U_{\mathrm{ad}}}

\DeclareMathAlphabet{\mathpzc}{OT1}{pzc}{m}{it}
\newcommand\oo{\mathpzc{o}}

\newcommand\dist{\operatorname{dist}}
\newcommand\conv{\operatorname{conv}}

\newcommand\proj{\operatorname{proj}}

\let\subseteq\subset

\newtheorem{theorem}{Theorem}[section]
\newtheorem{lemma}[theorem]{Lemma}

\newtheorem{assumption}[theorem]{Assumption}

\newtheorem{corollary}[theorem]{Corollary}
\newtheorem{remark}[theorem]{Remark}
\newtheorem{definition}[theorem]{Definition}

\crefname{assumption}{Assumption}{Assumptions}


\definecolor{darkgreen}{rgb}{0,0.5,0}
\definecolor{darkred}{rgb}{0.8,0,0}



\begin{document}
\title{Differential Sensitivity Analysis of Variational Inequalities with Locally Lipschitz Continuous Solution Operators\footnote{
This research was supported by the German Research Foundation (DFG) under grant numbers ME 3281/7-1 and WA 3636/4-1
 within the priority program ``Non-smooth and Complementarity-based Distributed Parameter
Systems: Simulation and Hierarchical Optimization'' (SPP 1962).}
}

\author{%
 	Constantin Christof%
 	\footnote{%
 		Technische Universität Dortmund,
 		Faculty of Mathematics,
 		LS X, 
 		44227 Dortmund,
 		Germany
 		}%
 	\and
 	Gerd Wachsmuth%
 	\footnote{%
 		Technische Universität Chemnitz,
 		Faculty of Mathematics,
 		Professorship Numerical Mathematics (Partial Differential Equations),
 		09107 Chemnitz,
 		Germany,
 		\url{http://www.tu-chemnitz.de/mathematik/part_dgl/people/wachsmuth/},
 		\email{gerd.wachsmuth@mathematik.tu-chemnitz.de}%
 	}%
 }
 \publishers{}
 \maketitle

 \begin{abstract}
This paper is concerned with the differential sensitivity analysis of variational inequalities
in Banach spaces whose solution operators satisfy a generalized Lipschitz condition.
We prove a sufficient criterion for the directional differentiability of the solution map
that turns out to be also necessary for elliptic variational inequalities in Hilbert spaces
(even in the presence of asymmetric bilinear forms, nonlinear operators and nonconvex functionals). 
In contrast to classical results, our method of proof does not rely on Attouch's theorem on
the characterization of Mosco convergence but is fully elementary. 
Moreover, our technique allows us to also study those cases where the variational inequality at hand is not uniquely solvable 
and where directional differentiability can only be obtained w.r.t.\ the weak or the weak-$\star$  topology of the underlying space. 
As tangible examples, we consider a variational inequality arising in elastoplasticity, the projection onto prox-regular sets,
and a bang-bang optimal control problem.
 \end{abstract}
 
 \begin{keywords}
Variational Inequalities, Sensitivity Analysis, Directional Differentiability, Bang-Bang, Optimal Control, Differential Stability, Second-Order Epi-Differentiability\\
 \end{keywords}
 
\begin{msc}
	\mscLink{90C31},
	\mscLink{49K40},
	\mscLink{47J20}
\end{msc}

\section{Introduction}

The aim of this paper is to study the differentiability properties of the solution operator to a parametrized variational inequality (VI) of the form 
\begin{equation}
	\label{eq:VI}
	\bar x \in X,\qquad 
	\dual{A(p, \bar x)}{x - \bar x} + j(x) - j(\bar x) \ge 0
	\qquad\forall x \in X.
\end{equation}
Here, $X$ denotes a Banach space, $A$ is an operator into the topological dual  $X\dualspace$ of $X$,  $j : X \to (-\infty, \infty]$ 
is a proper function (i.e., $j \not \equiv \infty$), and $p$ is an element of some parameter space $P$
(the argument of the solution map). 
For the precise assumptions on the quantities in \eqref{eq:VI}, 
we refer to \cref{sec:abstract_analysis}. 

Note that VIs of the type \eqref{eq:VI} occur naturally as optimality conditions
for minimization problems of the form
\begin{equation}
	\label{eq:minimization_problem}
	\text{Minimize} \quad J(p,x) + j(x)
	\qquad\text{w.r.t.\ } x \in X.
\end{equation}
Indeed, if $j$ is convex and $J(p, \cdot) : X \to \mathbb{R}$ is convex and Gâteaux differentiable,
then \eqref{eq:VI} with $A(p, \cdot ) = \partial_x J(p, \cdot)$  is a necessary and sufficient optimality condition for \eqref{eq:minimization_problem}.

Differentiability results for special instances of the VI \eqref{eq:VI} 
or the minimization problem \eqref{eq:minimization_problem}
can be found frequently in the literature. 
Especially  the case where $X$ is a Hilbert space and where \eqref{eq:minimization_problem}
describes the metric projection onto a closed convex nonempty set $K$ 
(i.e., where $P=X$, $J(p, x) = \frac12\norm{x - p}_X^2$, and $j = \delta_K : X \to \{0,\infty\}$ is the indicator function of $K$) 
has been studied extensively throughout the years in a wide variety of different settings.
Exemplarily, we mention 
\cite{Zarantonello1971,Mignot1976,Haraux1977,FitzpatrickPhelps1982,Rockafellar1990,Shapiro1994,Noll1995,RockafellarWets1998,Levy1999,Shapiro2016}.
Results that cover cases where $j$ is not the indicator function of some set $K$ may be found, 
e.g., in \cite{Sokolowski1988,SokolowskiZolesio1992,Do1992,BorweinNoll1994} 
and the more recent \cite{DelosReyes2016,ChristofMeyer2016,Adly2017,Hintermueller2017,ChristofWachsmuth2017:2}.

The contributions that shed the most light on the mechanisms that underlie the sensitivity
analysis of VIs of the form \eqref{eq:VI} are probably \cite{Do1992,BorweinNoll1994,RockafellarWets1998} 
and \cite{Adly2017}. In these works, 
it is shown that the differentiability properties of the solution operator to \eqref{eq:VI} are directly related 
to the so-called second-order epi-differentiability of the  functional $j : X \to (-\infty, \infty]$, cf.\ \cref{def:second_order_derivative}. 
More precisely, in \cite[Chapter 13G]{RockafellarWets1998}, \cite[Theorems 3.9 and 4.3]{Do1992} 
and \cite[Proposition 6.3]{BorweinNoll1994} it is established that the directional differentiability 
of the solution operator to \eqref{eq:minimization_problem} in a point $p$ is equivalent to the (strong) second-order 
epi-differentiability of the functional $j$ in $\bar x$ and the proto\-differentiability of the subdifferential $\partial j$ in $\bar x$, respectively,  
provided $X$ is a Hilbert space, $J(p, x) = \frac12\norm{x - p}_X^2$, and $j$ is a convex and lower semicontinuous function. 
In \cite{Adly2017}, a similar (but only sufficient) criterion for the directional differentiability of 
the solution map is obtained for problems that are not only perturbed in the operator $A$ 
but also in the functional $j$, see \cite[Theorem~41]{Adly2017}. 

What the approaches in \cite{Do1992,BorweinNoll1994,RockafellarWets1998,Adly2017} 
have in common is that they rely heavily on rather involved 
concepts and theorems from set-valued and convex analysis and 
the theory of monotone operators, cf., e.g., the proof of \cite[Theorem 4.3]{Do1992}. 
In this paper, we will demonstrate that the majority of the results in  \cite{Do1992,BorweinNoll1994,RockafellarWets1998,Adly2017} 
can be reproduced and even extended using only elementary 
tools from functional analysis 
(the most complicated are the theorem of Banach-Alaoglu and Banach's fixed-point theorem). 
The main advantages and novel features of our approach are the following:

\begin{enumerate}
\item 
We can establish that the second-order epi-differentiability 
of the functional $j$ in $\bar x$ is sufficient for the directional differentiability
of the solution operator to \eqref{eq:VI} without making 
use of involved instruments from convex and set-valued analysis, 
see \cref{thm:sufficient_abstract} and the more tangible \cref{corollary:tangible}. 
We do not have to invoke, e.g., Attouch's theorem which is at the heart of the proofs in \cite{Do1992}.
We further emphasize that the proof of \cref{thm:sufficient_abstract} is shorter than one page
and its most complicated argument is the selection of a weak-$\star$ convergent subsequence.

\item 
Because of its simplicity, our analysis allows for various generalizations and extensions. 
In particular, it is also applicable when  \eqref{eq:VI} is not uniquely solvable, 
when $X$ is not a Hilbert space, 
when $j$ is not convex, 
and when the directional differentiability is only obtainable in 
the weak or the weak-$\star$ topology of the underlying space,
cf.\ the analysis in \cref{sec:abstract_analysis} and the examples in \cref{sec:examples}.

\item 
In the case of an elliptic variational inequality in a Hilbert space,
our approach yields the equivalence of the (strong) second-order epi-differentiability 
of $j$ in $\bar x$ and the directional differentiability of the solution operator to \eqref{eq:VI}  
even in the presence of nonlinear operators, asymmetric bilinear forms and nonconvex functionals (see \cref{th:elliptic}). 
We are thus able to extend \cite[Theorem 4.3]{Do1992} and \cite[Proposition 6.3]{BorweinNoll1994} 
(which require $A$ to be given by $A(p,x) := x-p$ and $j$ to be convex and lower semicontinuous, 
and which rely heavily on results for the classical Moreau-Yosida regularization) 
to cases where the VI at hand cannot be identified with a minimization problem of the form \eqref{eq:minimization_problem}
and where the method of proof in \cite{Do1992,BorweinNoll1994} cannot be employed.
\end{enumerate}

We hope that the self-containedness and conciseness 
of our approach make this paper in particular 
helpful for those readers who are interested in the sensitivity analysis of VIs 
of the form \eqref{eq:VI} but who are not familiar with, e.g., 
the concepts of graphical convergence and protodifferentiability.

We would like to point out that the ideas that our analysis is based on 
can also be used to obtain differentiability results for VIs that involve 
not only a parameter-dependent operator $A$ but also a parameter-dependent functional $j$.
In \cite{ChristofMeyer2016}, for example, 
our approach was used to study the directional differentiability of the solution map 
$L^\infty_+(\Omega) \times H^{-1}(\Omega) \to H_0^1(\Omega)$, $(c, p) \mapsto \bar x$, 
to a $H_0^1(\Omega)$-elliptic VI of the form 
\begin{equation*} 
\bar x \in H_0^1(\Omega), \qquad \dual{A(\bar x) - p}{x - \bar x}  +  \int_\Omega c\, k(x) \mathrm{d}\lambda   -  \int_\Omega c \, k(\bar x) \mathrm{d}\lambda \geq 0 \qquad \forall x \in H_0^1(\Omega),
\end{equation*}
where $k$ is the Nemytskii operator of a piecewise-smooth convex real-valued function. 
We restrict our analysis to perturbations in the operator $A$ since a unified description of 
the sensitivity analysis becomes rather involved when perturbations in the functional $j$ are considered, 
and since somewhat peculiar effects occur when the functional $j$ is manipulated. 
See, e.g., the results in \cite[Section 5]{ChristofMeyer2016} for some examples and \cite{Adly2017} 
where the approach of \cite{Do1992} is generalized to parameter-dependent functionals $j$. 

Before we begin with our analysis, we give a short overview of the contents and the structure of this paper.

In \cref{sec:abstract_analysis}, 
we study the differentiability properties of the solution operator to the VI \eqref{eq:VI} in an abstract setting. 
Here, we also motivate and introduce the notions of ``weak-$\star$ second subderivative'' (\cref{def:weak_star_subderivative}) and
``second-order epi-differentiability'' (\cref{def:second_order_derivative}) that are needed for our approach. 
The main results of \cref{sec:abstract_analysis}, \cref{thm:necessary_condition_1} and \cref{thm:sufficient_abstract},
yield that directional derivatives of the solution map to \eqref{eq:VI} are
themselves solutions to suitably defined variational inequalities 
and that the second-order epi-differentiability of $j$ is sufficient for the directional differentiability of the solution operator to \eqref{eq:VI}. 

In \cref{sec:tangible_corollary},
we state a self-contained corollary of \cref{thm:sufficient_abstract} that is more tangible than the results of \cref{sec:abstract_analysis}.
\cref{sec:tangible_corollary} further contains a criterion for second-order epi-differentiability that is of major importance not only for practical applications 
but also for the development of the theory. 

In \cref{sec:elliptic}, 
we consider the special case that $X$ is a Hilbert space and that $A$ is strongly monotone. 
In this situation, the sufficient differentiability criterion proved in \cref{sec:abstract_analysis} 
is also necessary and, as a consequence, sharp. 

In \cref{sec:examples}, we apply our results to three model problems. 
These examples are not covered by the classical theory
and, thus, highlight the broad applicability of our results.
The first problem is a variational inequality of the first kind with saddle-point structure that arises in elastoplasticity and has been studied, e.g., in \cite{HerzogMeyerWachsmuth2010:2}.
Second, we study the projection onto prox-regular sets.
Finally, we apply our theorems to bang-bang optimal control problems in the measure space $\MM(\Omega)$. 
The results that we obtain here underline that it makes sense to study the variational inequality \eqref{eq:VI} in a Banach space setting 
and that the generality of our approach is not only of theoretical interest but also of relevance in practice. 

Lastly, in \cref{sec:conclusion}, we summarize our findings and make some concluding remarks. 
 
\section{Sensitivity Analysis in an Abstract Setting}
\label{sec:abstract_analysis}

As already mentioned in the introduction, the aim of this paper is to study variational inequalities of the form \eqref{eq:VI},
i.e., problems of the type
\begin{equation*}
	\bar x \in X,\qquad 
	\dual{A(p, \bar x)}{x - \bar x} + j(x) - j(\bar x) \ge 0
	\qquad\forall x \in X.
\end{equation*}
Our standing assumptions on the quantities in \eqref{eq:VI} are as follows:

\pagebreak[2]

\begin{assumption}[Functional Analytic Setting]\hfill
	\label{asm:data}
	\begin{itemize}
		\item
			$X$ is the (topological) dual of a reflexive or separable Banach space $Y$.
		\item
			$p$ is an element of a normed vector space $P$ (the space of parameters).
		\item
			$j : X \to (-\infty, \infty]$ is a proper function (not necessarily convex).
		\item
			$A : P \times X \to Y$ is a mapping into the predual $Y$ of $X$. 
	\end{itemize}
\end{assumption}

Our main interest is in the differentiability properties of the (potentially set-valued) solution operator
\begin{equation*}
S : P \rightrightarrows X,\qquad p \mapsto \{\bar x \in X \mid \bar x \text{ solves \eqref{eq:VI} with parameter } p\}.
\end{equation*}
To study the latter in the greatest possible generality, 
we avoid discussing the solvability of the problem \eqref{eq:VI} and simply state the minimal assumptions
that the solutions to  \eqref{eq:VI}  have to satisfy for our sensitivity analysis to hold. 
Tangible examples (e.g., applications with elliptic variational inequalities in Hilbert spaces) 
will be addressed later on, cf.\ \cref{sec:tangible_corollary,sec:elliptic,sec:examples}. 			

\begin{assumption}[Standing Assumptions for the Sensitivity Analysis] 
\label{assumption:sensitivity}
We are given two families $\{q_t\}_{0 < t < t_0} \subset P$ and $\{\bar x_t\}_{0 \leq t < t_0} \subset X$, $t_0 > 0$, 
such that the following is satisfied:
	\begin{enumerate}
		\item
			\label{assumption:sensitivity:i}
			It holds $q_t \to q$ in $P$ for $t \searrow 0$ with some $q \in P$.
		\item
			\label{assumption:sensitivity:ii}
			It holds $\bar x_t \in S(t q_t)$ for all $0 < t < t_0$,
			$\bar x_0 \in S(0)$, and there exists a constant $L>0$ with
			\begin{equation}
				\label{eq:LipschitzEstimate}
				\norm{\bar x_t - \bar x_0}_X \leq L t \quad \forall t \in [0, t_0).
			\end{equation}		
		\item
			\label{assumption:sensitivity:iii}
			There exist bounded linear operators $A_p \in \LL(P, Y)$ and $A_x \in \LL(X, Y)$ 
			   such that the difference quotients $y_t := (\bar x_t - \bar x_0)/t$, $0 < t < t_0$, satisfy
			\begin{equation}
				\label{eq:taylor_A}
					A(t \, q_t , \bar x_0 + t \, y_t)
					=
					A(0,\bar x_0)  + t \, A_p q_t + t \, A_x y_t + r(t)
			\end{equation}
			with a remainder $r : (0,t_0) \to Y$ such that $\|r(t)\|_Y/t \to 0$ for $t \searrow 0$.
	\end{enumerate}
\end{assumption}

\begin{remark}\hfill
\begin{enumerate}
\item Instead of $\bar x_t \in S(t q_t)$ for all $0 < t < t_0$ and $\bar x_0 \in S(0)$ we could also assume  
$\bar x_t \in S(p + t q_t)$ for all $0 < t < t_0$ and $\bar x_0 \in S(p)$ with some fixed $p \in P$. 
Since such a $p$ can always be ``hidden'' by redefining $A$, we consider w.l.o.g.\ the case $p=0$.

\item The Lipschitz condition in \cref{assumption:sensitivity} \ref{assumption:sensitivity:ii} is, e.g., 
satisfied in case of an elliptic variational inequality in a Hilbert space, cf.\ \cref{sec:elliptic}.  
If \eqref{eq:VI} can be identified with a minimization problem of the form \eqref{eq:minimization_problem}, \eqref{eq:LipschitzEstimate} 
can further be recovered from a quadratic growth condition for the solution $\bar x_0$ of the unperturbed problem, cf.\ \cref{subsec:bang_bang}.

\item \cref{assumption:sensitivity} \ref{assumption:sensitivity:iii}
is, e.g., satisfied if  $A$ is  Fréchet differentiable in $(0, \bar x_0)$.
\end{enumerate}
\end{remark}

We emphasize that we do not say anything about the uniqueness of solutions in the above. 
We just assume that a family $\{\bar x_t\}_{0 \leq t < t_0}$ with the properties in \cref{assumption:sensitivity} exists. 
In what follows, our aim will be to prove necessary and sufficient conditions
for the weak-$\star$ and the strong convergence of the difference 
quotients $y_t$. Note that, if $q_t = q$ and if $S(tq)$ is a singleton for all $0 \leq t < t_0$, 
then the weak-$\star$ (respectively, strong) convergence of $y_t$ to some $y$  for $t \searrow 0$ 
is equivalent to the weak-$\star$ (respectively, strong) directional differentiability 
of the solution operator $S$ in the point $p=0$ in the direction $q$ with directional derivative $y$. 
To study the behavior of the difference quotients $\{  y_t\}_{0 < t < t_0} $, 
we make the following observation:

\begin{lemma}
	\label{lem:test_VI}
	The difference quotients $y_t$, $0 < t < t_0$, satisfy
	\begin{equation}
		\label{eq:VI_diffquot}
		\begin{aligned}
			\bigdual{A_p q_t + A_x y_t}{z - y_t}
			&+
			\frac12 \, \left ( \frac{j(\bar x_0 + t \, z) - j(\bar x_0) - t \, \dual{a_0}{z}}{t^2 / 2} \right )
			\\
			&-
			\frac12 \, \left ( \frac{j(\bar x_0 + t \, y_t) - j(\bar x_0) - t \, \dual{a_0}{y_t}}{t^2 / 2} \right )
			+
			\hat r(t) \, \norm{z - y_t}_X
			\ge0
		\end{aligned}
	\end{equation}
	for all $z \in X$.
	Here, $a_0 := -A(0, \bar x_0)$ and  $\hat r(t) :=  \norm{r(t)}_Y / t$, so that $\hat r(t)  = \oo(1)$ as $t \searrow 0$.
\end{lemma}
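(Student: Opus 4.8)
The plan is to derive \eqref{eq:VI_diffquot} by a purely algebraic manipulation: insert the representation $\bar x_t = \bar x_0 + t\,y_t$ into the variational inequality \eqref{eq:VI} satisfied by $\bar x_t \in S(t\,q_t)$, test it with a suitably shifted element, substitute the first-order expansion \eqref{eq:taylor_A} of $A$, and rearrange. Nothing beyond the definition of the duality pairing is required.

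First I would fix an arbitrary $z \in X$. If $\bar x_0 + t\,z \notin \dom j$, the corresponding difference quotient in \eqref{eq:VI_diffquot} equals $+\infty$ and there is nothing to show, so I may assume $j(\bar x_0 + t\,z) < \infty$; recall also that $\bar x_0, \bar x_t \in \dom j$ because they solve \eqref{eq:VI}, so that $y_t \in X$ and every finite-valued quotient occurring below is well defined. Testing the VI for the parameter $t\,q_t$ and the solution $\bar x_t = \bar x_0 + t\,y_t$ with the element $x := \bar x_0 + t\,z = \bar x_t + t\,(z - y_t)$ gives
\[
	t\,\bigdual{A(t\,q_t, \bar x_0 + t\,y_t)}{z - y_t} + j(\bar x_0 + t\,z) - j(\bar x_0 + t\,y_t) \ge 0 .
\]

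Next I would substitute \eqref{eq:taylor_A} and use $a_0 = -A(0,\bar x_0)$, so that the leading term becomes $-t\,\dual{a_0}{z - y_t}$, and then divide by $t^2 > 0$. Adding and subtracting $j(\bar x_0)$ as well as $t\,\dual{a_0}{z}$ and $t\,\dual{a_0}{y_t}$ in the numerator of the $j$-increment splits it into the two second-order difference quotients appearing in \eqref{eq:VI_diffquot} plus the extra term $\tfrac1t\,\dual{a_0}{z - y_t}$, which cancels exactly the contribution $-\tfrac1t\,\dual{a_0}{z - y_t}$ stemming from $A(0,\bar x_0)$. What remains is \eqref{eq:VI_diffquot} with $\hat r(t)\,\norm{z - y_t}_X$ replaced by $\tfrac1t\,\dual{r(t)}{z - y_t}$; since $r(t) \in Y$ and $z - y_t \in X = Y\dualspace$, the duality estimate between $Y$ and $X = Y\dualspace$ yields $\tfrac1t\,\dual{r(t)}{z - y_t} \le (\norm{r(t)}_Y/t)\,\norm{z - y_t}_X = \hat r(t)\,\norm{z - y_t}_X$, and replacing $\tfrac1t\,\dual{r(t)}{z - y_t}$ by this larger quantity preserves the inequality; rewriting $1/t^2 = \tfrac12\,(t^2/2)^{-1}$ in the two remaining quotients then produces precisely \eqref{eq:VI_diffquot}. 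The property $\hat r(t) = \oo(1)$ as $t \searrow 0$ is immediate from \cref{assumption:sensitivity}~\ref{assumption:sensitivity:iii}.

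I do not expect any genuine obstacle, since the claim is essentially a rearrangement identity. The only points that require a bit of care are the admissibility of the test element (handling the case $\bar x_0 + t\,z \notin \dom j$ and noting that solutions of \eqref{eq:VI} lie in $\dom j$), the bookkeeping that makes the $\dual{a_0}{\cdot}$-corrections cancel against the $A(0,\bar x_0)$-contribution, and using the correct duality estimate between $Y$ and $X = Y\dualspace$ for the remainder $r(t)$.
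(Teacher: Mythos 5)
Your proposal is correct and follows essentially the same route as the paper: test the VI for $\bar x_t = \bar x_0 + t\,y_t$ with $x = \bar x_0 + t\,z$, insert the expansion \eqref{eq:taylor_A}, divide by $t^2$, let the $\dual{a_0}{\cdot}$-corrections cancel the $-A(0,\bar x_0)$-contribution, and bound the remainder term by $\hat r(t)\,\norm{z - y_t}_X$ using the duality between $Y$ and $X = Y\dualspace$. The extra case distinction for $\bar x_0 + t\,z \notin \dom j$ is harmless but unnecessary, since \eqref{eq:VI} and \eqref{eq:VI_diffquot} hold trivially when the corresponding term equals $+\infty$.
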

\begin{proof}
	Since $\bar x_t = \bar x_0 + t y_t$ solves \eqref{eq:VI} with $p = t \, q_t$ and because of \eqref{eq:taylor_A}, it holds
	\begin{align*}
		0
		&\le
		\bigdual{A(t \, q_t, \bar x_t)}{\bar x_0 + t \, z - \bar x_t} + j(\bar x_0 + t \, z) - j(\bar x_t) \\
		&=
		\bigdual{-a_0 + t \, A_p q_t + t \, A_x y_t + r(t)}{t \, (z - y_t)} + j(\bar x_0 + t \, z) - j(\bar x_0 + t \, y_t)
	\end{align*}
	for all $z \in X$, 
	where $\norm{r(t)}_Y / t \to 0$ as $t \searrow 0$.
	Dividing by $t^2$, rearranging terms and using that $j(\bar x_0) \in \R$, \eqref{eq:VI_diffquot} follows immediately 
	with $\hat r(t) = \norm{r(t)}_Y / t$.
\end{proof}

Note that from $\bar x_0 \in S(0)$ and \eqref{eq:VI}, we obtain that 
$\bar x_0$ and $a_0 := -A(0, \bar x_0)$ satisfy $\bar x_0 \in \dom(j) := \{x \in X \mid j(x) \in \mathbb{R}\}$ and
$a_0 \in \partial j(\bar x_0)$, where
\begin{equation*}
	\partial j(x)
	:=
	\{
		g \in Y
		\mid
		j(z) \ge j(x) + \dual{g}{z- x} \; \forall z \in X
	\}\quad \forall x \in X.
\end{equation*}
This shows that the bracketed expressions in \eqref{eq:VI_diffquot} may be interpreted as second-order difference quotients
in which the (possibly nonexistent) derivative of $j$ at $\bar x_0$ is replaced with an element of the subdifferential $\partial j(\bar x_0)$ 
(where we use the term subdifferential  somewhat loosely here since $j$ is not assumed to be convex). The structure of \eqref{eq:VI_diffquot} 
motivates the following definition.

\begin{definition}[Weak-\texorpdfstring{$\star$}{*} Second Subderivative]
	\label{def:weak_star_subderivative}
	Let $x \in  \dom(j)$ and $g \in Y$ be given.
	Then the (weak-$\star$) second subderivative  $Q\jxg : X \to [-\infty,\infty]$ of $j$ in $x$ for $g$ is defined by
	\begin{equation*}
		Q\jxg(z)
		:=
		\inf
		\biggh\{\}{
			\liminf_{n \to \infty} \frac{j(x + t_n \, z_n) - j(x) - t_n \dual{g}{z_n}}{t_n^2/2}
			\mid
			t_n \searrow 0,
			z_n \weaklystar z
		}
		.
	\end{equation*}
\end{definition}
The notion of second subderivatives goes (at least to the authors' best knowledge) 
back to Rockafellar who introduced the concept in finite dimensions in 1985, see \cite{ROCKAFELLAR1985167}. 
Since then second subderivatives have appeared frequently in the literature, 
although under different names.  
\cite{Do1992} and \cite{Noll1995}, for example, 
use a construction analogous  to that in \cref{def:weak_star_subderivative} in the Hilbert space setting 
and call the resulting functional second-order epi-derivative and second-order Mosco derivative, respectively.
We remark that the epigraph of $Q\jxg$ can be identified with an appropriately defined Kuratowski limit 
of the epigraphs of the difference quotient functions appearing in \eqref{eq:VI_diffquot}, cf.\ \cite[Section 1]{Do1992}. 

The next lemma collects some basic properties of the functional $Q\jxg$.
\begin{lemma}
	\label{lem:basic_properties}
	Let $x \in  \dom(j)$ and $g \in Y$ be arbitrary but fixed.
	Then, it holds
	$Q\jxg(\alpha \, z) = \alpha^2 \, Q\jxg(z)$ for all $\alpha > 0$ and all $z \in X$
	and $Q\jxg(0) \le 0$.
	Moreover,
	if $g \in \partial j(x)$, then
	$Q\jxg(z) \ge 0$ for all $z \in X$
	and
	$Q\jxg(0) = 0$.
\end{lemma}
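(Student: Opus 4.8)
The statement has three parts: positive homogeneity of degree two, the bound $Q\jxg(0)\le 0$, and—under the additional hypothesis $g\in\partial j(x)$—nonnegativity of $Q\jxg$ together with $Q\jxg(0)=0$. The plan is to unwind \cref{def:weak_star_subderivative} in each case and exploit the freedom in choosing the sequences $t_n\searrow 0$ and $z_n\weaklystar z$.

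For the homogeneity $Q\jxg(\alpha z)=\alpha^2 Q\jxg(z)$ with $\alpha>0$, I would set up a bijection between the admissible sequences for $z$ and those for $\alpha z$. Given $t_n\searrow 0$ and $z_n\weaklystar z$, put $\tilde t_n:=\alpha t_n$ and $\tilde z_n:=z_n$; then $\tilde t_n\searrow 0$ and $\tilde z_n\weaklystar z$, and a short computation shows
\[
\frac{j(x+\tilde t_n\,(\alpha z)/\alpha\cdot\alpha)-j(x)-\tilde t_n\dual{g}{\tilde z_n}}{\tilde t_n^2/2}
\]
is not quite the right substitution—so instead I would directly compare: for admissible $(t_n,w_n)$ with $w_n\weaklystar\alpha z$, write $w_n=\alpha(w_n/\alpha)$, note $w_n/\alpha\weaklystar z$, and observe
\[
\frac{j(x+t_n w_n)-j(x)-t_n\dual{g}{w_n}}{t_n^2/2}
=\alpha^2\,\frac{j\bigl(x+(\alpha t_n)(w_n/\alpha)\bigr)-j(x)-(\alpha t_n)\dual{g}{w_n/\alpha}}{(\alpha t_n)^2/2}.
\]
Since $\alpha t_n\searrow 0$ and $w_n/\alpha\weaklystar z$, taking $\liminf$ and then $\inf$ over all such sequences yields $Q\jxg(\alpha z)\ge\alpha^2 Q\jxg(z)$; the reverse inequality follows symmetrically by running the substitution the other way. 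This step is essentially bookkeeping and I expect no real obstacle here.

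For $Q\jxg(0)\le 0$: take the constant sequences $z_n:=0\weaklystar 0$ and any $t_n\searrow 0$. Then each difference quotient equals $\bigl(j(x)-j(x)-0\bigr)/(t_n^2/2)=0$, so the $\liminf$ along this particular sequence is $0$, and since $Q\jxg(0)$ is the infimum over all admissible sequences, $Q\jxg(0)\le 0$. For the last part, assume $g\in\partial j(x)$. Then by the very definition of $\partial j(x)$ recalled in the excerpt, $j(z)\ge j(x)+\dual{g}{z-x}$ for all $z\in X$; applying this with $z$ replaced by $x+t_n z_n$ gives $j(x+t_n z_n)-j(x)-t_n\dual{g}{z_n}\ge 0$, hence every difference quotient in \cref{def:weak_star_subderivative} is nonnegative. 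Taking $\liminf$ preserves this, and then the infimum over admissible sequences is $\ge 0$; thus $Q\jxg(z)\ge 0$ for all $z$. Combining $Q\jxg(0)\ge 0$ with the already-established $Q\jxg(0)\le 0$ gives $Q\jxg(0)=0$. The only mild subtlety—hardly an obstacle—is making sure the difference quotients are well defined (not $\infty-\infty$): this is fine because $x\in\dom(j)$ means $j(x)\in\R$, while $j(x+t_n z_n)$ may be $+\infty$, in which case the quotient is $+\infty$ and the inequalities still hold in $[-\infty,\infty]$.
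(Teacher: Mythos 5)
Your proof is correct and follows essentially the same route as the paper, which disposes of the lemma in one line by citing a scaling argument, the choice $z_n=0$, and the definition of $\partial j$; your substitution $(t_n,w_n)\mapsto(\alpha t_n, w_n/\alpha)$ is exactly the scaling argument made explicit, and the two-sided inequality via applying it with $\alpha$ and $1/\alpha$ is sound. The only cosmetic issue is the false start in the homogeneity paragraph, which you should delete since the corrected computation that follows is the one that matters.
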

\begin{proof}
	The first formula follows from a simple scaling argument. 
	In the case
	$g \in \partial j(x)$,
	the nonnegativity of $Q\jxg$ follows from the definition of $\partial j$.
	The formulas for
	$Q\jxg(0)$ follow from the choice $z_n = 0$.
\end{proof}

In the case that $g \in \partial j(x)$,
\cref{lem:basic_properties} implies in particular that
$Q\jxg$ is proper
and that
the domain of the second subderivative $Q\jxg$ 
is a pointed cone (where ``pointed'' means that the cone contains the origin). 
In what follows, we will call this cone the \emph{reduced critical cone} $\KK\jxg$, i.e.,
\begin{equation*}
	\KK\jxg
	:=
	\dom \bigh(){ Q\jxg }
	=
	\bigh\{\}{z \in X \mid Q\jxg(z) < +\infty }.
\end{equation*}
The motivation behind this naming convention will become clear in \cref{lem:more_properties_of_Q} \ref{item:more_properties_of_Q_3} 
and the examples in \cref{sec:examples}. 

We are now in the position to prove that a limit point $y$ of the difference quotients $y_t$ has to be the solution 
of  a 
certain VI that involves the second subderivative.

\begin{theorem}[Necessary Condition for Limit Points of the Difference Quotients \texorpdfstring{$y_t$}{}]
\label{thm:necessary_condition_1}
	Suppose that there exists a $y \in X$ such that the difference quotients $y_t$ satisfy
	$y_t \weaklystar y$ in $X$
	and
	$A_x y_t \to A_x y$ in $Y$
	for $t \searrow 0$.
	Then,
	$y$ satisfies	
	\begin{equation}
		\label{eq:linearized-VI}
		\dual{A_p q + A_x y}{z - y}
		+ \frac12 Q_j^{\bar x_0, a_0}(z)
		- \frac12 Q_j^{\bar x_0, a_0}(y)
		\ge
		0
		\qquad\forall z \in X
	\end{equation}
	and it holds
	\begin{equation}
		\label{eq:identity_Q_y}
		Q_j^{\bar x_0, a_0}(y)
		=
		-\dual{A_p q + A_x y}{y}
		<
		+\infty
	\end{equation}
	as well as
	\begin{equation}
		\label{eq:recovery_sequence}
		Q_j^{\bar x_0, a_0}(y)
		=
		\lim_{t \searrow 0} \frac{j(\bar x_0 + t \, y_{t}) - j(\bar x_0) - t \dual{a_0}{y_{t}}}{t^2/2}
		.
	\end{equation}
\end{theorem}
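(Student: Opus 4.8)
The plan is to pass to the limit $t \searrow 0$ in the variational inequality \eqref{eq:VI_diffquot} from \cref{lem:test_VI}, exploiting the definition of the weak-$\star$ second subderivative. First I would fix an arbitrary $z \in \KK_j^{\bar x_0, a_0}$ (the inequality \eqref{eq:linearized-VI} is trivial when $Q_j^{\bar x_0,a_0}(z) = +\infty$, so this is the only case that needs work). By definition of $Q_j^{\bar x_0, a_0}(z)$ as an infimum over recovery sequences, there exists a sequence $z_n \weaklystar z$ and $t_n \searrow 0$ along which the second-order difference quotient of $j$ at $\bar x_0$ for $a_0$ converges to $Q_j^{\bar x_0, a_0}(z)$. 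Since \eqref{eq:VI_diffquot} holds for \emph{every} test element, I would insert $z = z_n$ and $t = t_n$ there. On the left-hand side, the term $\bigdual{A_p q_{t_n} + A_x y_{t_n}}{z_n - y_{t_n}}$ converges to $\dual{A_p q + A_x y}{z - y}$: indeed $q_{t_n} \to q$ by \cref{assumption:sensitivity}\,\ref{assumption:sensitivity:i}, $A_x y_{t_n} \to A_x y$ strongly by hypothesis, and $z_n - y_{t_n} \weaklystar z - y$, so the duality pairing of a strongly convergent sequence in $Y$ against a weak-$\star$ convergent sequence in $X = Y\dualspace$ converges. The remainder term $\hat r(t_n)\norm{z_n - y_{t_n}}_X$ vanishes because $\hat r(t_n) = \oo(1)$ and $\norm{z_n - y_{t_n}}_X$ stays bounded (the $z_n$ are bounded as a weak-$\star$ convergent sequence by Banach--Steinhaus/Banach--Alaoglu, and $\norm{y_{t_n}}_X \le L$ by \eqref{eq:LipschitzEstimate}).

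The key point is the treatment of the two bracketed $j$-difference quotients. The one with $z_n$ converges to $Q_j^{\bar x_0,a_0}(z)$ by the choice of the recovery sequence. For the one with $y_{t_n}$, since $y_{t_n} \weaklystar y$ and $t_n \searrow 0$, the pair $(t_n, y_{t_n})$ is itself an admissible sequence in the infimum defining $Q_j^{\bar x_0,a_0}(y)$, so
\[
\liminf_{n\to\infty} \frac{j(\bar x_0 + t_n y_{t_n}) - j(\bar x_0) - t_n \dual{a_0}{y_{t_n}}}{t_n^2/2}
\;\ge\; Q_j^{\bar x_0,a_0}(y).
\]
Because this term enters \eqref{eq:VI_diffquot} with a \emph{minus} sign, taking $\limsup$ of the whole inequality gives
\[
\dual{A_p q + A_x y}{z - y} + \tfrac12 Q_j^{\bar x_0,a_0}(z) - \tfrac12 Q_j^{\bar x_0,a_0}(y) \ge 0,
\]
which is \eqref{eq:linearized-VI}. (One should be slightly careful: the middle $j$-term converges along the chosen subsequence, while only a $\liminf$ is available for the $y_{t_n}$-term; combining a convergent term, a term with a lower bound appearing with a minus sign, and convergent pairings, the $\limsup$ of the sum is controlled as claimed.)

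Having established \eqref{eq:linearized-VI}, the identity \eqref{eq:identity_Q_y} follows by testing: choosing $z = y$ in \eqref{eq:linearized-VI} is not directly available since that yields $0 \ge 0$ trivially, so instead I would argue as follows. Testing \eqref{eq:VI_diffquot} with the \emph{constant} choice $z = y$ (more precisely, with a recovery sequence for $Q_j^{\bar x_0,a_0}(y)$ in the first slot as well) and passing to the limit gives $\dual{A_p q + A_x y}{y - y} + \tfrac12 Q_j^{\bar x_0,a_0}(y) - \tfrac12 Q_j^{\bar x_0,a_0}(y) \ge 0$, which is vacuous; the sharp statement instead comes from combining the upper bound obtained by testing \eqref{eq:VI_diffquot} with $z$ against the lower bound $\liminf \ge Q_j^{\bar x_0,a_0}(y)$. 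Concretely: from \eqref{eq:VI_diffquot} with any fixed $z$ one extracts, after rearranging and passing to the limit, that
\[
\tfrac12 Q_j^{\bar x_0,a_0}(y) \;\le\; \limsup_{n} \tfrac12\!\left(\tfrac{j(\bar x_0+t_n y_{t_n})-j(\bar x_0)-t_n\dual{a_0}{y_{t_n}}}{t_n^2/2}\right) \le \dual{A_p q + A_x y}{z-y} + \tfrac12 Q_j^{\bar x_0,a_0}(z);
\]
taking the infimum over $z$ together with \eqref{eq:linearized-VI} pins down equality, and choosing $z$ in a recovery sequence for $y$ forces
\[
\dual{A_p q + A_x y}{y} \le -Q_j^{\bar x_0,a_0}(y) \le \dual{A_p q + A_x y}{y},
\]
giving \eqref{eq:identity_Q_y}; along the way the squeeze shows $\liminf = \limsup = \lim$ in \eqref{eq:recovery_sequence}. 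Finiteness in \eqref{eq:identity_Q_y} then follows since the right-hand side is a finite real number, and $Q_j^{\bar x_0,a_0}(y) \ge 0$ by \cref{lem:basic_properties} (as $a_0 \in \partial j(\bar x_0)$), so $Q_j^{\bar x_0,a_0}(y) \in [0,\infty)$.

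\textbf{Main obstacle.} The delicate part is bookkeeping the mismatch between the convergent first $j$-term (guaranteed only along a cleverly chosen recovery subsequence $t_n$) and the $\liminf$-only lower bound for the $y_{t_n}$-term, and then running the squeeze argument that simultaneously yields \eqref{eq:linearized-VI}, \eqref{eq:identity_Q_y}, and the full limit \eqref{eq:recovery_sequence}. One must be sure the recovery sequence for $z$ in the first slot does not disturb the $y_{t_n}$ in the second slot — but since the second slot depends only on $t$ and $y_t$, not on $z$, once a time sequence $t_n \searrow 0$ realizing the infimum for $z$ is fixed, the second term is automatically an admissible competitor for $Q_j^{\bar x_0,a_0}(y)$ along that \emph{same} $t_n$. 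Everything else (boundedness of the involved sequences, continuity of the linear pairings under the weak-$\star$/strong duality) is routine given \cref{assumption:sensitivity} and Banach--Alaoglu.
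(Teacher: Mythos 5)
Your argument for \eqref{eq:linearized-VI} is essentially the paper's: insert a (near-)recovery sequence for $z$ into \eqref{eq:VI_diffquot} and pass to the limit, bounding the $y_{t_n}$-term from below by $Q\jxa(y)$ via \cref{def:weak_star_subderivative}. (One blemish: the infimum in \cref{def:weak_star_subderivative} need not be attained, so you should work with an $\varepsilon$-almost-minimizing pair $(t_n,z_n)$ and send $\varepsilon\searrow 0$ at the end, as the paper does.) For \eqref{eq:identity_Q_y}, however, the chain $\dual{A_p q + A_x y}{y}\le -Q\jxa(y)\le \dual{A_p q + A_x y}{y}$ is asserted rather than derived: testing \eqref{eq:linearized-VI} with $z=0$ only gives $\tfrac12 Q\jxa(y)\le -\dual{A_p q + A_x y}{y}$ (note the factor $\tfrac12$), and ``taking the infimum over $z$'' of the right-hand side of \eqref{eq:linearized-VI} returns the tautology $\tfrac12 Q\jxa(y)\le\tfrac12 Q\jxa(y)$ because the infimum sits at $z=y$. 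The missing ingredient is the positive homogeneity of degree two from \cref{lem:basic_properties}: test \eqref{eq:linearized-VI} with $z=s\,y$, divide by $s-1$, and let $s\searrow 1$ and $s\nearrow 1$.

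The serious gap is \eqref{eq:recovery_sequence}, which asserts a \emph{full} limit as $t\searrow 0$. Writing $D_t(w):=\bigl(j(\bar x_0+t\,w)-j(\bar x_0)-t\dual{a_0}{w}\bigr)/(t^2/2)$, your squeeze controls $\limsup_n D_{t_n}(y_{t_n})$ only along the particular null sequence $\{t_n\}$ attached to the recovery sequence for $y$ that you place in the first slot of \eqref{eq:VI_diffquot}; it says nothing about an arbitrary null sequence. Along an arbitrary $\{t_n\}$, the only test elements whose $j$-difference-quotients you can control are the trivial ones, and $z=0$ yields merely $\limsup_n D_{t_n}(y_{t_n})\le -2\dual{A_p q + A_x y}{y}=2\,Q\jxa(y)$ — off by a factor of two from the required upper bound $Q\jxa(y)$ (this is exactly the paper's \eqref{eq:inductionbasis}). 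Closing that factor-two gap is the whole point of the paper's argument: it introduces $\Theta(s)=\limsup_n D_{s t_n}(y_{s t_n})$ for an \emph{arbitrary} fixed null sequence, derives the two-parameter inequality \eqref{eq:relation_s1_s2} by testing the VI at time $s_1 t_n$ with the rescaled difference quotient $\frac{s_2}{s_1}y_{s_2 t_n}$, and then bootstraps by induction to $\Theta(s)\le\frac{m+1}{m}Q\jxa(y)$ for all $m$. Nothing in your proposal supplies this step, so \eqref{eq:recovery_sequence} remains unproven as written.
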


\begin{proof}
	We first prove \eqref{eq:linearized-VI}.
	Let $z \in \KK\jxa$ and $\varepsilon > 0$ be given. Then, the definitions of  $\smash{Q\jxa(z)}$ and $\smash{\KK\jxa}$ yield that there exist
	sequences $z_n \weaklystar z$ and $t_n \searrow 0$
	with
	\begin{equation*}
		Q\jxa(z) 
		\le
		\lim_{n \to \infty} \frac{j(\bar x_0 + t_n \, z_n) - j(\bar x_0) - t_n \dual{g}{z_n}}{t_n^2/2}
		\le Q\jxa(z) + \varepsilon < \infty
		.
	\end{equation*}
	From \eqref{eq:VI_diffquot} with $t_n$ and $z_n$, we infer
	\begin{align*}
		\bigdual{A_p q_{t_n} + A_x y_{t_n}}{z_n - y_{t_n}}
		&+
		\frac12 \, \frac{j(\bar x_0 + t_n \, z_n) - j(\bar x_0) - t_n \, \dual{a_0}{z_n}}{t_n^2 / 2}
		\\
		&-
		\frac12 \, \frac{j(\bar x_0 + t_n \, y_{t_n}) - j(\bar x_0) - t_n \, \dual{a_0}{y_{t_n}}}{t_n^2 / 2}
		+
		\hat r_n
		\ge0
	\end{align*}
	with $\hat r_n \to 0$ as $n \to \infty$.
	Passing to the limit $n \to \infty$ in the above, we find
	\begin{align*}
		0
		&\le
		\bigdual{A_p q + A_x y}{z - y}
		+
		\frac12 \, Q\jxa(z) + \varepsilon
		-
		\liminf_{n \to \infty} \frac12 \, \frac{j(\bar x_0 + t_n \, y_{t_n}) - j(\bar x_0) - t_n \, \dual{a_0}{y_{t_n}}}{t_n^2 / 2}
		\\
		&\le
		\bigdual{A_p q + A_x y}{z - y}
		+
		\frac12 \, Q\jxa(z) + \varepsilon
		-
		\frac12 \, Q\jxa(y).
	\end{align*}
	Letting $\varepsilon \searrow 0$, \eqref{eq:linearized-VI} now follows immediately. 

	In order to prove \eqref{eq:identity_Q_y},
	we first note that \eqref{eq:linearized-VI} with $z = 0 \in \KK\jxa$ yields $Q\jxa(y) < +\infty$.
	Choosing $z = s \, y$ with arbitrary $s \ge 0$ in \eqref{eq:linearized-VI} and using the positive homogeneity
	of $Q\jxa$, we further find that
	\begin{equation*}
		(s - 1) \, 
		\bigdual{A_p q + A_x y}{y}
		+
		\frac{s^2 - 1}{2} \, Q\jxa(y) \ge 0
		\qquad\forall s \ge 0.
	\end{equation*}
	Dividing this inequality by $s - 1$ and passing to the limits $s \searrow 1$ and $s \nearrow 1$, we obtain \eqref{eq:identity_Q_y} as claimed. 

	It remains to check \eqref{eq:recovery_sequence}.
	To this end, we fix a sequence $\{t_n\} \subset \R^+$ with $t_n \searrow 0$ and consider for $s>0$ the function
	\begin{equation*}
		\Theta(s)
		:=
		\limsup_{n \to \infty}
		\frac{j(\bar x_0 + s \, t_n \, y_{s \, t_n}) - j(\bar x_0) - s \, t_n \dual{a_0}{y_{s \, t_n}}}{(s \, t_n)^2/2}
		\ge
		Q\jxa(y)
		.
	\end{equation*}
	For arbitrary $s_1, s_2 > 0$,  \eqref{eq:VI_diffquot} with $t = s_1 \, t_n$, $z = \frac{s_2}{s_1} \, y_{s_2 \, t_n}$ yields
	\begin{align*}
		&
		\Bigdual{A_p q_{s_1\,t_n} + A_x y_{s_1\,t_n}}{\frac{s_2}{s_1} \, y_{s_2 \, t_n} - y_{s_1\,t_n}}
		\\&\qquad
		+
		\frac12 \, \frac{j(\bar x_0 + s_2 \, t_n \, y_{s_2 \, t_n}) - j(\bar x_0) - s_2 \, t_n \, \dual{a_0}{y_{s_2 \, t_n}}}{(s_1 \, t_n)^2 / 2}
		\\&\qquad
		-
		\frac12 \, \frac{j(\bar x_0 + s_1 \, t_n \, y_{s_1\,t_n}) - j(\bar x_0) - s_1 \, t_n \, \dual{a_0}{y_{s_1\,t_n}}}{(s_1 \, t_n)^2 / 2}
		+
		\hat r_n
		\ge0,
	\end{align*}
	where $\hat r_n \to 0$ as $n \to \infty$.
	Passing to the limit in the above (with a suitable subsequence), using \eqref{eq:identity_Q_y} and multiplying by $2$, we find
	\begin{equation}
		\label{eq:relation_s1_s2}
		2 \, \left ( 1 - \frac{s_2}{s_1} \right ) \, Q\jxa(y)
		+ \frac{s_2^2}{s_1^2} \, \Theta(s_2) - \Theta(s_1)
		\ge
		0.
	\end{equation}
	The same arguments with $s_2 = 0$ yield
	\begin{equation}
	\label{eq:inductionbasis}
		2 \, Q\jxa(y)
		- \Theta(s_1)
		\ge
		0\quad \forall s_1 >0.
	\end{equation}
	We will now prove that
	\begin{equation}
		\label{eq:induction_theta}
		\Theta(s) \le \frac{m+1}{m} \, Q\jxa(y)
		\qquad\forall s > 0 \quad \forall m \in \N.
	\end{equation}
	To obtain \eqref{eq:induction_theta}, we use induction over $m$.
	For $m = 1$,  \eqref{eq:induction_theta} is equivalent to \eqref{eq:inductionbasis} so there is nothing to prove. 
	For the induction step $m \mapsto m+1$, we choose $s_1 = s$ and $s_2 = \frac{m}{m+1} \, s$ in \eqref{eq:relation_s1_s2}. 
	This yields 
	\begin{align*}
		\Theta(s)
		&\le
		2 \, \Bigh(){ 1 - \frac{m}{m+1}} \, Q\jxa(y) + \frac{m^2}{(m+1)^2} \, \Theta(s_2)
		\\
		&\le
		2 \, \Bigh(){ 1 - \frac{m}{m+1}} \, Q\jxa(y) + \frac{m^2}{(m+1)^2} \, \frac{m+1}{m} \, Q\jxa(y)
		\le \frac{m+2}{m+1} \, Q\jxa(y),
	\end{align*}
	where in the second estimate we have used the induction hypothesis.
	Hence, \eqref{eq:induction_theta} is valid and the induction is complete. Letting $m \to \infty$ in \eqref{eq:induction_theta}, we   arrive at
	$\Theta(s) \le Q\jxa(y)$ for all $s>0$.
	With $s = 1$, we obtain in particular
	\begin{equation*}
		Q\jxa(y)
		\le
		\liminf_{n \to \infty} \frac{j(\bar x_0 + t_n \, y_{t_n}) - j(\bar x_0) - t_n \dual{a_0}{y_{t_n}}}{t_n^2/2}
		\le
		\Theta(1)
		\le
		Q\jxa(y).
	\end{equation*}
	Since $\{t_n\}$ was arbitrary, \eqref{eq:recovery_sequence} now follows immediately and the proof is complete. 
\end{proof}

\begin{remark}~
\begin{enumerate}
\item 
The assumptions $y_t \weaklystar y$ in $X$ and $A_x y_t \to A_x y$ in $Y$ in \cref{thm:necessary_condition_1} are satisfied in two interesting situations:
Firstly, if $y_t$ converges even strongly to $y$ and secondly if $\smash{y_t \weaklystar y}$ and if $A_x$ is weakly-$\star$ completely continuous. 
We will see in \cref{sec:elliptic,sec:examples} 
that both these cases appear in practice (the first one in the Hilbert space setting, the second one in case of our bang-bang example).

\item Assume for the moment that $q_t = q$, that $S(tq)$ is a singleton for all $0 \leq t < t_0$, and
that $S$ is (strongly) directionally differentiable in $p=0$ in the direction $q$ with derivative $y$.
Then, \cref{thm:necessary_condition_1} implies that $y$ has to be a solution to \eqref{eq:linearized-VI} and
that the difference quotients $y_t$ have to be a recovery sequence
for the weak-$\star$ second subderivative $Q_j^{\bar x_0, a_0}(y)$, see \eqref{eq:recovery_sequence}.
These necessary conditions for directional derivatives (that also apply when $S$ is only directionally differentiable in some directions)
have, at least to the authors' best knowledge, not been known before. 
\end{enumerate}
\end{remark}

The above observation that
the difference quotients $y_t$ provide a recovery sequence
motivates the following definition.

\begin{definition}[Second-Order Epi-Differentiability]
	\label{def:second_order_derivative}
	Let $x \in  \dom(j)$ and $g \in Y$ be given.
	The functional $j$ is said to be weakly-$\star$ twice epi-differentiable (respectively, strictly twice epi-differentiable, respectively, strongly twice epi-differentiable) 
	in $x$ for $g$ in a direction $z \in X$,
	if for all $\{t_n\}\subset \R^+$ with $t_n \searrow 0$ there exists a sequence $z_n$ satisfying $z_n \weaklystar z$ 
	(respectively, $z_n \weaklystar z$ and $\|z_n\|_X \to \|z\|_X$, respectively, $z_n \to z$) and 
	\begin{equation}
	\label{eq:recovery_sequence_def}
		Q\jxg(z)
		=
		\lim_{n \to \infty} \frac{j(x + t_n \, z_n) - j(x) - t_n \dual{g}{z_n}}{t_n^2/2}
		.
	\end{equation}
	The functional $j$ is called weakly-$\star$/strictly/strongly twice epi-differentiable in $x$ for $g$ if it is  
	weakly-$\star$/strictly/strongly twice epi-differentiable in $x$ for $g$ in all directions $z \in X$.
\end{definition}

\begin{remark}~
\label{remark:secondorderepidifferentiability}
\begin{enumerate}
\item We emphasize that the prefixes ``weakly-$\star$'', ``strictly'' and ``strongly'' 
in \cref{def:second_order_derivative} refer to the mode of 
convergence of the recovery sequence. In all cases, the considered second subderivative is that in  \cref{def:weak_star_subderivative}.

\item If $X$ is reflexive, then strong second-order epi-differentiability is equivalent to the Mosco epi-convergence 
of the sequence of second-order difference quotient functions appearing in \eqref{eq:VI_diffquot}, see, e.g., \cite[Section 2]{Do1992}.

\item Note that $j$ is weakly-$\star$/strictly/strongly twice epi-differentiable in 
an $x \in \dom(j)$ for a $g \in \partial j(x)$ if and only if $j$ is
 weakly-$\star$/strictly/strongly twice epi-differentiable in $x$ for $g$ in all directions $z \in \KK\jxg$. 
This follows from the fact that for all $z \in X \setminus \KK\jxg$, recovery sequences can trivially be found (just choose, e.g., $z_n := z$). 
\label{remark:secondorderepidifferentiability:iii}
\end{enumerate}
\end{remark}

If $j$ is twice epi-differentiable in a point $x \in  \dom(j)$  for some $g \in \partial j(x)$, then  
$Q\jxg$ enjoys additional properties as the following lemma shows.
\begin{lemma} 
	\label{lem:more_properties_of_Q}
	Let $x \in  \dom(j)$ and $g \in \partial j(x)$ be given.
	\begin{enumerate}
		\item
			\label{item:more_properties_of_Q_1}
			If $j$ is convex and weakly-$\star$ twice epi-differentiable in $x$ for $g$,
			then $Q\jxg$ is convex.
		\item
			\label{item:more_properties_of_Q_2}
			If $j$ is strictly twice epi-differentiable in $x$ for $g$,
			then $Q\jxg$ is weakly-$\star$ sequentially lower semicontinuous.
		\item
			\label{item:more_properties_of_Q_3}
			If $j$ is Hadamard directionally differentiable in $x$ and strongly twice epi-differentiable in $x$ for $g$, then it holds
			$j'(x; z) = \dual{g}{z}$ for all $z \in \KK\jxg$.
	\end{enumerate}
\end{lemma}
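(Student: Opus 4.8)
The plan is to prove the three assertions in turn, in each case working with the second‑order difference quotient functions $\Delta_t(z) := \frac{j(x + t\,z) - j(x) - t\,\dual{g}{z}}{t^2/2}$ for $t > 0$, whose behaviour at weak‑$\star$ limits of varying arguments is exactly what is encoded by $Q\jxg$ through \cref{def:weak_star_subderivative,def:second_order_derivative}. Throughout I use that $g \in \partial j(x)$ forces $Q\jxg \ge 0$ and $Q\jxg(0) = 0$ by \cref{lem:basic_properties}, so that only finite values have to be dealt with. For \ref{item:more_properties_of_Q_1}, the key observation is that each $\Delta_t$ is convex when $j$ is (it equals $2/t^2$ times the sum of the convex map $z \mapsto j(x + t\,z)$, an affine precomposition of $j$, and a linear functional). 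Given $z^1, z^2 \in \KK\jxg$, $\lambda \in (0,1)$ and an arbitrary sequence $t_n \searrow 0$, weak‑$\star$ twice epi‑differentiability in the directions $z^1$ and $z^2$ furnishes recovery sequences $z_n^i \weaklystar z^i$ with $\Delta_{t_n}(z_n^i) \to Q\jxg(z^i)$, $i = 1,2$. Since $\lambda z_n^1 + (1-\lambda) z_n^2 \weaklystar \lambda z^1 + (1-\lambda) z^2$ and $\Delta_{t_n}(\lambda z_n^1 + (1-\lambda)z_n^2) \le \lambda\Delta_{t_n}(z_n^1) + (1-\lambda)\Delta_{t_n}(z_n^2)$ by convexity, the definition of $Q\jxg$ as an infimum over such sequences yields $Q\jxg(\lambda z^1 + (1-\lambda)z^2) \le \lambda Q\jxg(z^1) + (1-\lambda)Q\jxg(z^2)$; directions outside $\KK\jxg$ are trivial, so $Q\jxg$ is convex.

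For \ref{item:more_properties_of_Q_2}, let $z^k \weaklystar z$ with $\ell := \liminf_k Q\jxg(z^k) < \infty$ (otherwise there is nothing to show) and pass to a subsequence along which $Q\jxg(z^k) \to \ell$ with all values finite. Weak‑$\star$ convergent sequences are bounded, say $\norm{z^k}_X \le M$, and $\norm{z}_X \le M$ by weak‑$\star$ lower semicontinuity of the norm. Fix $t_n \searrow 0$; by the definition of $Q\jxg(z)$ it suffices to produce $w_n \weaklystar z$ with $\liminf_n \Delta_{t_n}(w_n) \le \ell$. For each $k$, strict twice epi‑differentiability in direction $z^k$ provides $(w_n^k)_n$ with $w_n^k \weaklystar z^k$, $\norm{w_n^k}_X \to \norm{z^k}_X \le M$ and $\Delta_{t_n}(w_n^k) \to Q\jxg(z^k)$ as $n \to \infty$; the norm convergence—which is precisely what distinguishes \emph{strict} from merely weak‑$\star$ epi‑differentiability—guarantees that for $n$ large all these recovery sequences lie in the single bounded, hence weak‑$\star$ metrizable, set $B := \{v \in X : \norm{v}_X \le M+1\}$ (using that $Y$ is separable; the reflexive case reduces to this by passing to a closed separable subspace). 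With $d$ a weak‑$\star$ metric on $B$, pick increasing thresholds $N_k \to \infty$ such that for $n \ge N_k$ one has $w_n^k \in B$, $d(w_n^k, z^k) \le 1/k$ and $\Delta_{t_n}(w_n^k) \le Q\jxg(z^k) + 1/k$. Setting $k(n) := \max\{k : N_k \le n\}$ (defined for $n$ large, with $k(n) \to \infty$ and $n \ge N_{k(n)}$) and $w_n := w_n^{k(n)}$, we get $d(w_n, z) \le d(w_n^{k(n)}, z^{k(n)}) + d(z^{k(n)}, z) \to 0$, so $w_n \weaklystar z$, and $\Delta_{t_n}(w_n) \le Q\jxg(z^{k(n)}) + 1/k(n) \to \ell$; hence $Q\jxg(z) \le \ell$, which is the claimed sequential lower semicontinuity.

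For \ref{item:more_properties_of_Q_3}, since $g \in \partial j(x)$ we have $\frac{j(x + t\,w) - j(x)}{t} \ge \dual{g}{w}$ for all $t > 0$ and $w \in X$, and letting $t \searrow 0$, $w \to z$ gives $j'(x;z) \ge \dual{g}{z}$ (the Hadamard directional differentiability of $j$ at $x$ guaranteeing that this limit exists). For the converse inequality, let $z \in \KK\jxg$, fix $t_n \searrow 0$, and let $z_n \to z$ (strongly) be a recovery sequence from strong twice epi‑differentiability, so that $j(x + t_n z_n) - j(x) - t_n\dual{g}{z_n} = \frac{t_n^2}{2}\bigl(Q\jxg(z) + \oo(1)\bigr)$. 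Dividing by $t_n$ and using $\dual{g}{z_n} \to \dual{g}{z}$ (continuity of $g$) together with $t_n \to 0$ yields $\frac{j(x + t_n z_n) - j(x)}{t_n} \to \dual{g}{z}$; on the other hand, since $z_n \to z$ and $t_n \searrow 0$, Hadamard directional differentiability forces the same quotient to converge to $j'(x;z)$. Hence $j'(x;z) = \dual{g}{z}$ for all $z \in \KK\jxg$.

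I expect \ref{item:more_properties_of_Q_2} to be the only delicate point: its proof hinges on a careful triple‑index diagonalization in which strict (rather than merely weak‑$\star$) twice epi‑differentiability is used precisely to keep all recovery sequences inside one fixed bounded, weak‑$\star$ metrizable set, so that the weak‑$\star$ convergence of the diagonal sequence can actually be established. Parts \ref{item:more_properties_of_Q_1} and \ref{item:more_properties_of_Q_3} are short, relying only on convexity of the difference quotients, the subgradient inequality, and the defining limit of the (Hadamard) directional derivative.
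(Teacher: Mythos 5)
Your proposal is correct and follows essentially the same route as the paper's proof in all three parts: convexity of the second-order difference quotients plus recovery sequences for (i), a diagonal selection over recovery sequences kept in a fixed bounded set (your weak-$\star$ metrizability argument for separable $Y$ is just a repackaging of the paper's testing against a countable dense subset of $Y$, with the same reduction of the reflexive case) for (ii), and dividing the recovery-sequence identity by $t_n$ instead of $t_n^2/2$ for (iii). The only cosmetic difference is the redundant first inequality $j'(x;z)\ge\dual{g}{z}$ in part (iii), which the equality argument already subsumes.
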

\begin{proof}
	We first prove \ref{item:more_properties_of_Q_1}:
	Let $z, \hat z \in \KK\jxg$ and $\lambda \in [0,1]$ be given, 
	and let $\{t_n\} \subset \R^+$ be an arbitrary but fixed sequence with $t_n \searrow 0$.
	Then, the definition of weak-$\star$ second-order epi-differentiability implies that 
	there exist recovery sequences $z_n, \hat z_n$ with $z_n \weaklystar z$ and $\hat z_n \weaklystar \hat z$ 
	such that \eqref{eq:recovery_sequence_def} holds for $z$ and $\hat z$, respectively.
	Using these sequences, the convexity of $j$ and the definition of $\smash{Q\jxg\bigh(){ \lambda \, z + (1-\lambda) \, \hat z}}$, we may compute
	\begin{align*}
		&\lambda \, Q\jxg(z) + (1-\lambda) \, Q\jxg(\hat z)
		\\&\qquad
		=
		\lim_{n \to \infty}
		\frac{\lambda \, j(x + t_n \, z_n) + (1-\lambda)\,j(x + t_n \, \hat z_n) - j(x) - t_n \, \dual{g}{\lambda \, z_n + (1-\lambda) \, \hat z_n}}{t_n^2 / 2}
		\\&\qquad
		\ge
		\liminf_{n \to \infty}
		\frac{j\bigh(){x + t_n \, (\lambda \, z_n + (1-\lambda) \, \hat z_n)} - j(x) - t_n \, \dual{g}{\lambda \, z_n + (1-\lambda) \, \hat z_n}}{t_n^2 / 2}
		\\&\qquad
		\ge
		Q\jxg\bigh(){ \lambda \, z + (1-\lambda) \, \hat z}.
	\end{align*}
	This establishes \ref{item:more_properties_of_Q_1}.

	To obtain \ref{item:more_properties_of_Q_2}, we consider an arbitrary but fixed $z \in X$ and a sequence $z_k$ with $\smash{z_k \weaklystar z}$. 
	We  assume w.l.o.g.\ that $ \liminf_{k \to \infty} Q\jxg(z_k) = \lim_{k \to \infty} Q\jxg(z_k) \in \R $
	(if it holds $\liminf_{k \to \infty} Q\jxg(z_k) = \infty$, then the claim is vacuously true). 
	Suppose for the time being that $Y$ is separable with a countable dense subset $\{w_i\}_{i \in \N}$,
	let $\{t_n\} \subset \R^+$ be some sequence with $t_n \searrow 0$,
	and let 	$\{z_{k,n}\}_{n\in\N}$ be recovery sequences for the $z_k$ as in the definition of the strict second-order 
	epi-differentiability. Then, we may find a strictly increasing sequence $\{N_k\}$ such that
	\begin{equation}
		\label{eq:property_zkn_first}
		\sum_{i = 1}^k
		\abs{\dual{w_i}{z_{k,n} - z_k}}
		+
		\bigabs{\norm{z_{k,n}}_X - \norm{z_k}_X}
		+
		\Bigabs{
			Q\jxg(z_k)
			-
			\frac{j(x + t_n z_{k,n}) - j(x) - t_n \dual{g}{z_{k,n}}}{t_n^2/2}
		}
		\le
		\frac1k
	\end{equation}
	holds for all $n \ge N_k$ and all $k \in \N$. Redefine $N_1 := 1$ and set $k_n := \sup\{ k \in \N \mid  n \geq N_{k}\}$ for all $n \in \N$.
	Then, it holds $n \geq N_{k_n}$ for all $n$ by definition, $k_n \in \N$ for all $n$ by the strict monotonicity of $\{N_k\}$ and $k_n \to \infty$ monotonously for $n \to \infty$.
	The latter implies in tandem with \eqref{eq:property_zkn_first}  that $\hat z_n := z_{k_n,n}$
	satisfies $\dual{w_i}{\hat z_n - z_{k_n}}  \to 0$ for all $i \in \N$, $\abs{\norm{\hat z_n}_X - \norm{z_{k_n}}_X}  \to 0$ and 
	\begin{equation*}
		\Bigabs{
			\frac{j(x + t_n \, \hat z_n) - j(x) - t_n \dual{g}{\hat z_n}}{t_n^2/2}
			-
			Q\jxg(z_{k_n})
		}
		\to
		0
	\end{equation*}
	 as $n \to \infty$.
	From $z_{k_n} \weaklystar z$ and the boundedness of the norms $\norm{z_{k_n}}_X$, 
	we now obtain  
	$\hat z_n \weaklystar z$ 
	with
	\begin{equation*}
		\lim_{k \to \infty} Q\jxg(z_k)
		=
		\lim_{n \to \infty} Q\jxg(z_{k_n})
		=
		\liminf_{n \to \infty} \frac{j(x + t_n \, \hat z_n) - j(x) - t_n \dual{g}{\hat z_n}}{t_n^2/2}
		\ge
		Q\jxg(z)
		.
	\end{equation*}
	This establishes \ref{item:more_properties_of_Q_2} in the case that $Y$ is separable.
	If $Y$ is not separable but reflexive, we can use standard arguments as employed, e.g., in
	\cite[Proof of Theorem~6.24]{Kuttler1997} 
	to resort to the separable case.  
	
	It remains to prove \ref{item:more_properties_of_Q_3}. 
	To this end, suppose that a $z \in \KK\jxg$ is given and that $z_n$ is a recovery sequence for some $\{t_n\}$ with $t_n \searrow 0$
	as in the definition of the strong second-order epi-differentiability. Then, the Hadamard directional differentiability and the finiteness of $Q\jxg(z)$ yield
	\begin{equation*}
		0
		=
		\lim_{n \to \infty} \frac{j(x + t_n \, z_n) - j(x) - t_n \dual{g}{z_n}}{t_n}
		=
		j'(x; z) - \dual{g}{z}
		.
		\qedhere
	\end{equation*}
\end{proof}

\begin{remark}
If $j$ is Hadamard directionally differentiable in $\bar x_0$ and $a_0 := -A(0, \bar x_0)$, 
then it is easy to check that $\bar x_0 \in S(0)$ implies
\begin{equation}
\label{eq:nec_opt_condition}
j'(\bar x_0; z) - \left \langle a_0, z \right \rangle \geq 0\qquad \forall z \in X.
\end{equation}
In the case that the VI \eqref{eq:VI} arises from a minimization problem of the form \eqref{eq:minimization_problem},
\eqref{eq:nec_opt_condition} is precisely the necessary optimality condition of first order.
\cref{lem:more_properties_of_Q} \ref{item:more_properties_of_Q_3} shows that, under the assumptions of Hadamard directional differentiability
and strong second-order epi-differentiability, 
all elements of the set $\smash{\KK\jxa}$ satisfy the necessary condition \eqref{eq:nec_opt_condition} with equality. 
The set $\smash{\KK\jxa}$ is thus contained in what is typically referred to as the critical cone. 
We point out that the latter inclusion is in general strict, cf.\ the examples in \cref{sec:examples}. 
It therefore makes sense to call the set $\smash{\KK\jxa}$ the reduced critical cone. 
\end{remark}

We are now in the position to state the main theorem of this section.
It establishes that the second-order epi-differentiability of $j$ and the uniqueness of solutions to \eqref{eq:linearized-VI} 
are sufficient for the weak-$\star$ convergence of the difference quotients $y_t$. 

\begin{theorem}[Sufficient Condition for the Convergence of the Difference Quotients]
	\label{thm:sufficient_abstract}
	Suppose that one of the following conditions is satisfied.
	\begin{enumerate}
		\item
			\label{item:sufficient_abstract_1}
			$j$ is weakly-$\star$ twice epi-differentiable in $\bar x_0$ for $a_0$ and $A_x$ is weakly-$\star$ completely continuous in the sense that
			 $y_n \weaklystar y$ in $X$ implies $A_x y_n \to A_x y$ in $Y$.
		\item
			\label{item:sufficient_abstract_2}
			$j$ is strongly twice epi-differentiable in $\bar x_0$ for $a_0$ and $A_x$ is such that $y_n \weaklystar y$ in $X$ implies $A_x y_n \weakly A_x y$ in $Y$ 
			and $\liminf_{n \to \infty } \dual{A_x y_n}{y_n} \geq \dual{A_x y}{y}$. 
	\end{enumerate}
	Then, the sequence of difference quotients $y_t$ has at least one weak-$\star$ accumulation point for $t \searrow 0$, and if $y \in X$  is such an accumulation point, then it holds 
	\begin{equation}
		\label{eq:VI_derivative}
		\dual{A_p q + A_x y}{z - y}
		+ \frac12 Q_j^{\bar x_0, a_0}(z)
		- \frac12 Q_j^{\bar x_0, a_0}(y)
		\ge
		0
		\qquad\forall z \in X
	\end{equation}
	and $\dual{A_x y_{t_n}}{y_{t_n}} \to \dual{A_x y}{y}$ for every sequence $\{t_n\} \subset \R^+$ with $t_n \searrow 0$ and $y_{t_n} \weaklystar y$.
	If, moreover, \eqref{eq:VI_derivative} admits at most one solution, then there exists a unique $y \in X$ 
	with $y_t \weaklystar y$ and $\dual{A_x y_{t}}{y_{t}} \to \dual{A_x y}{y}$ for $t \searrow 0$, and this limit $y$ is a solution to \eqref{eq:VI_derivative}.
\end{theorem}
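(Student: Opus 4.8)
The plan is to combine the Lipschitz bound \eqref{eq:LipschitzEstimate}, the Banach--Alaoglu theorem and a passage to the limit in the variational inequality \eqref{eq:VI_diffquot} of \cref{lem:test_VI} along recovery sequences supplied by the second-order epi-differentiability of $j$. As a preliminary step, \eqref{eq:LipschitzEstimate} gives $\norm{y_t}_X \le L$ for all $t \in (0, t_0)$, so $\{y_t\}$ is a bounded subset of $X = Y\dualspace$; since $Y$ is reflexive or separable, every sequence $t_n \searrow 0$ has a subsequence along which $y_{t_n}$ converges weak-$\star$, which already yields the existence of a weak-$\star$ accumulation point. I would then fix such an accumulation point $y \in X$ and a sequence $t_n \searrow 0$ with $y_{t_n} \weaklystar y$, and note that along it $A_p q_{t_n} \to A_p q$ in $Y$ (because $q_t \to q$ and $A_p \in \LL(P,Y)$), $\hat r(t_n) \to 0$, and — by hypothesis — either $A_x y_{t_n} \to A_x y$ in $Y$ (case \ref{item:sufficient_abstract_1}) or $A_x y_{t_n} \weakly A_x y$ in $Y$ with $\liminf_n \dual{A_x y_{t_n}}{y_{t_n}} \ge \dual{A_x y}{y}$ (case \ref{item:sufficient_abstract_2}). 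The quantities $\dual{A_x y_{t_n}}{y_{t_n}}$ are bounded by $\norm{A_x}_{\LL(X,Y)} L^2$, so all $\liminf$s and $\limsup$s occurring below are finite.

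To obtain \eqref{eq:VI_derivative}, let $z \in \KK\jxa$ be arbitrary. The second-order epi-differentiability of $j$ in $\bar x_0$ for $a_0$ — in the weak-$\star$ sense in case \ref{item:sufficient_abstract_1}, in the strong sense in case \ref{item:sufficient_abstract_2} — provides a (bounded) sequence $z_n$ with $z_n \weaklystar z$ (resp.\ $z_n \to z$) along which the second-order difference quotient at $z_n$ converges to $Q\jxa(z)$. Inserting $(t_n, z_n)$ into \eqref{eq:VI_diffquot} and rearranging so that $\dual{A_x y_{t_n}}{y_{t_n}}$ and the second-order difference quotient at $y_{t_n}$ stand alone on the left, I would pass to the limit: the terms paired with $z_n$ and the term $\dual{A_p q_{t_n}}{y_{t_n}}$ converge by pairing strong convergence in $Y$ with bounded weak-$\star$ convergence in $X$ (case \ref{item:sufficient_abstract_1}), resp.\ weak convergence in $Y$ with strong convergence in $X$ (case \ref{item:sufficient_abstract_2}); $\hat r(t_n)\norm{z_n - y_{t_n}}_X \to 0$; and on the left we use $\liminf_n \dual{A_x y_{t_n}}{y_{t_n}} \ge \dual{A_x y}{y}$, that $\liminf_n$ of the second-order difference quotient at $y_{t_n}$ is $\ge Q\jxa(y)$ (directly from \cref{def:weak_star_subderivative}, since $t_n \searrow 0$ and $y_{t_n} \weaklystar y$), and $\liminf(a_n + b_n) \ge \liminf a_n + \liminf b_n$. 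This gives
\[
\dual{A_x y}{y} + \tfrac12 Q\jxa(y) \;\le\; \dual{A_p q + A_x y}{z} - \dual{A_p q}{y} + \tfrac12 Q\jxa(z) \qquad \forall z \in \KK\jxa .
\]
Since the right-hand side is finite, this forces $Q\jxa(y) < \infty$, i.e.\ $y \in \KK\jxa$, and rearranging (using bilinearity of the pairing) yields \eqref{eq:VI_derivative} for $z \in \KK\jxa$; for $z \notin \KK\jxa$ one has $Q\jxa(z) = +\infty$ and \eqref{eq:VI_derivative} is trivial (note $0 \in \KK\jxa$ by \cref{lem:basic_properties}).

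For the convergence $\dual{A_x y_{t_n}}{y_{t_n}} \to \dual{A_x y}{y}$, in case \ref{item:sufficient_abstract_1} I would simply pair the strong limit $A_x y_{t_n} \to A_x y$ in $Y$ with the bounded, weak-$\star$ convergent $y_{t_n}$. In case \ref{item:sufficient_abstract_2}, I would repeat the limit passage above with a recovery sequence for $z = y$ (which exists by strong second-order epi-differentiability in the direction $y \in \KK\jxa$): the rearranged form of \eqref{eq:VI_diffquot} then yields $\limsup_n ( \dual{A_x y_{t_n}}{y_{t_n}} + \tfrac12 D_n ) \le \dual{A_x y}{y} + \tfrac12 Q\jxa(y)$, where $D_n$ denotes the second-order difference quotient at $y_{t_n}$; since $\liminf_n \tfrac12 D_n \ge \tfrac12 Q\jxa(y) > -\infty$, the elementary inequality $\limsup a_n \le \limsup(a_n + b_n) - \liminf b_n$ gives $\limsup_n \dual{A_x y_{t_n}}{y_{t_n}} \le \dual{A_x y}{y}$, which combined with the reverse $\liminf$-bound yields the claim.

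It remains to treat the uniqueness case. If \eqref{eq:VI_derivative} admits at most one solution, then by the foregoing every weak-$\star$ accumulation point of the bounded family $\{y_t\}$ solves \eqref{eq:VI_derivative}, hence there is exactly one such point $y$, and $y$ solves \eqref{eq:VI_derivative}. A standard subsequence argument — each sequence $t_n \searrow 0$ has a weak-$\star$ convergent subsequence, whose limit solves \eqref{eq:VI_derivative} and is therefore $y$ — then shows $y_t \weaklystar y$ as $t \searrow 0$, and applying the previous paragraph to an arbitrary sequence $t_n \searrow 0$ gives $\dual{A_x y_t}{y_t} \to \dual{A_x y}{y}$. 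The step I expect to be the main obstacle is the bookkeeping in the limit passage, in particular isolating $\dual{A_x y_{t_n}}{y_{t_n}}$ in case \ref{item:sufficient_abstract_2}, where only weak convergence of $A_x y_{t_n}$ in $Y$ is available and the second-order difference quotients might a priori tend to $+\infty$.
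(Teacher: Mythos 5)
Your proposal is correct and follows essentially the same route as the paper: boundedness of $\{y_t\}$ via \eqref{eq:LipschitzEstimate} plus Banach--Alaoglu, insertion of a recovery sequence $z_n$ for $z \in \KK\jxa$ into \eqref{eq:VI_diffquot}, a limit passage pairing strong/weak-$\star$ (resp.\ weak/strong) convergence and using $\liminf_n \dual{A_x y_{t_n}}{y_{t_n}} \ge \dual{A_x y}{y}$ together with the defining $\liminf$-bound for $Q\jxa(y)$, followed by the test choice $z = y$ to upgrade to $\dual{A_x y_{t_n}}{y_{t_n}} \to \dual{A_x y}{y}$ and the standard subsequence argument for uniqueness. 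The only (immaterial) differences are that you organize the limit passage via $\liminf$-superadditivity where the paper uses a $\limsup$-chain, and that in case \ref{item:sufficient_abstract_1} you obtain the convergence of $\dual{A_x y_{t_n}}{y_{t_n}}$ directly from the complete continuity of $A_x$ rather than from the chain of inequalities.
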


\begin{proof}
	Since the family of difference quotients $\{y_t\}$ is bounded by \eqref{eq:LipschitzEstimate}, 
	the existence of a weak-$\star$ accumulation point is a direct consequence of the theorem of
	Banach-Alaoglu.
	Consider now an arbitrary but fixed $y \in X$ that satisfies $y_n := y_{t_n} \weaklystar y$ for some 
	$\{t_n\} \subset \R^+$ with $t_n \searrow 0$ and let  $z \in \KK\jxa$ be given.
	Then, the definitions of weak-$\star$ and strong second-order epi-differentiability imply that
	in both cases 
	\ref{item:sufficient_abstract_1}
	and
	\ref{item:sufficient_abstract_2}
	we can find a recovery sequence $\{z_n\}$ with
	\begin{equation*}
		z_n \weaklystar z,
		\quad
		\dual{A_x y_n}{z_n} \to \dual{A_x y}{z},
		\quad
		Q\jxa(z)
		=
		\lim_{n \to \infty} \frac{j(\bar x_0 + t_n \, z_n) - j(\bar x_0) - t_n \dual{a_0}{z_n}}{t_n^2/2}.
	\end{equation*}
	Using the sequence $z_n$ in \eqref{eq:VI_diffquot}, we find that
	\begin{align*}
		\bigdual{A_p q_n + A_x y_n}{z_n - y_n}
		&+
		\frac12 \, \frac{j(\bar x_0 + t_n \, z_n) - j(\bar x_0) - t_n \, \dual{a_0}{z_n}}{t_n^2 / 2}
		\\
		&-
		\frac12 \, \frac{j(\bar x_0 + t_n \, y_n) - j(\bar x_0) - t_n \, \dual{a_0}{y_n}}{t_n^2 / 2}
		+
		\hat r_n \, \norm{z_n - y_n}_X
		\ge0,
	\end{align*}
	where $q_n := q_{t_n}$ and where $\hat r_n$ is a remainder with  $\hat r_n \searrow 0$ for $n \to \infty$.
	Letting $n \to \infty$ in the above, it follows
	\begin{align*}
		&
		\bigdual{A_p q + A_x y}{z}
		- 
		\bigdual{A_p q}{y}
		+
		\frac12\,Q\jxa(z)
		\\
		&\qquad
		\ge
		\limsup_{n \to \infty}
		\left (
		\bigdual{ A_x y_n}{ y_n}
		+
		\frac12 \, \frac{j(\bar x_0 + t_n \, y_n) - j(\bar x_0) - t_n \, \dual{a_0}{y_n}}{t_n^2 / 2}
		\right)
		\\
		&\qquad
		\ge
		\limsup_{n \to \infty}
		\bigdual{  A_x y_n}{ y_n}
		+
		\frac12\,Q\jxa(y)
		\\
		&\qquad
		\ge
		\liminf_{n \to \infty}
		\bigdual{ A_x y_n}{ y_n}
		+
		\frac12\,Q\jxa(y)
		\ge
		\bigdual{ A_x y}{y}
		+
		\frac12\,Q\jxa(y).
	\end{align*}
	Hence, $y \in \KK\jxa$ and $y$ solves \eqref{eq:VI_derivative}.
	Moreover, by using the test function $z = y$ in the above chain of inequalities, we obtain 
	$\dual{A_x y_n}{y_n} \to \dual{A_x y}{y}$.
	This proves the first claim.

	Suppose now that \eqref{eq:VI_derivative} admits at most one solution. 
	Then, the boundedness of the family $\{y_t\}$ implies that for every sequence $\{t_n\} \subset \R^+$ with $t_n \searrow 0$
	a subsequence of $\{y_{t_n}\}$ converges weakly-$\star$.
	From the first part of the theorem and the fact that \eqref{eq:VI_derivative} can have at most one solution, we obtain
	that the weak-$\star$ limit point is unique.
	A standard argument now shows that
	the entire sequence $y_t$ has to be weakly-$\star$ convergent to  $y$ 
	with $\dual{A_x y_{t}}{y_{t}} \to \dual{A_x y}{y}$ for $t \searrow 0$.
	This completes the proof. 
\end{proof}

Note that, as a byproduct of our sensitivity analysis, we obtain that  \eqref{eq:VI_derivative} always admits
a solution $y \in X$ in the situation of \cref{thm:sufficient_abstract}.

\section{A Tangible Corollary and Some Helpful Results}
\label{sec:tangible_corollary}

To make the results of \cref{sec:abstract_analysis} more accessible, we state the following self-contained corollary of \cref{thm:sufficient_abstract} that covers the case where
the solution operator $S : P \rightrightarrows X$ satisfies a generalized local Lipschitz condition. 

\begin{corollary}[Directional Differentiability in the Case of Local Lipschitz Continuity]
\label{corollary:tangible}
Let $S : P \rightrightarrows X$ denote the (potentially set-valued) solution operator of the VI
\begin{equation*}
	\bar x \in X,\qquad 
	\dual{A(p, \bar x)}{x - \bar x} + j(x) - j(\bar x) \ge 0
	\qquad\forall x \in X,
\end{equation*}
where $X, A, j$ are assumed to satisfy the conditions in \cref{asm:data}. Denote by $B_r^Z(z)$ the closed ball in a normed space $Z$ with radius $r>0$ and midpoint $z \in Z$. 
Suppose that a $p_0 \in P$, an $\bar x_0 \in S(p_0)$ and an $R>0$ are given such that $S(p_0) \cap B_R^X(\bar x_0) = \{\bar x_0\}$, 
such that $A$ is Fréchet-differentiable in $(p_0, \bar x_0)$ with partial derivatives $A_p \in \LL(P, Y)$ and $A_x \in \LL(X, Y)$, 
and such that the solution map $S$ is locally  nonempty and upper Lipschitzian at $p_0$ in the sense that
\begin{equation*}
 \emptyset \ne S(p) \cap B_R^X(\bar x_0)  \subset B_{L t}^X(\bar x_0)\quad \forall p \in B_{t}^P(p_0)
\end{equation*}
for some $L>0$ and all small enough $t>0$. Suppose further that the VI
	\begin{equation}
	\label{eq:VI_derivative_again}
		y \in X,\qquad \dual{A_p q + A_x y}{z - y}
		+ \frac12 Q\jxa(z)
		- \frac12 Q\jxa(y)
		\ge
		0
		\qquad\forall z \in X
	\end{equation}
with $a_0 := -A(p_0, \bar x_0)$ admits at most one solution for every $q \in P$ and assume that one of the following conditions 
is satisfied.
	\begin{enumerate}
		\item
			$j$ is weakly-$\star$ twice epi-differentiable in $\bar x_0$ for $a_0$ and $A_x$ is weakly-$\star$ completely continuous in the sense that
			 $y_n \weaklystar y$ in $X$ implies $A_x y_n \to A_x y$ in $Y$.\label{item:epi_scenario1}
			
		\item\label{item:tangible_2}
			$j$ is strongly twice epi-differentiable in $\bar x_0$ for $a_0$ and $A_x$ is such that $y_n \weaklystar y$ in $X$ implies $A_x y_n \weakly A_x y$ in $Y$ 
			and $\liminf_{n \to \infty } \dual{A_x y_n}{y_n} \geq \dual{A_x y}{y}$. \label{item:epi_scenario2}
			
	\end{enumerate}
 Then, \eqref{eq:VI_derivative_again} is uniquely solvable for all $q \in P$ and $S$ is weakly-$\star$ 
Hadamard directionally differentiable in the sense that for every family of parameters $\{q_t\}_{0 < t < t_0} \subset P$ that 
satisfies $q_t \to q$ for $t \searrow 0$ with some $q \in P$  and every family of solutions 
$\{\bar x_t\}_{0 < t < t_0} \subset X$ that satisfies $\bar x_t \in S(p_0 + t q_t) \cap B_R^X(\bar x_0)$ for all $0 < t < t_0$, it holds
\begin{equation}
\label{eq:randomconvergence42}
\frac{\bar x_t - \bar x_0}{t} \weaklystar y\quad \text{and}\quad \left \langle A_x \left ( \frac{\bar x_t - \bar x_0}{t}\right ),  \frac{\bar x_t - \bar x_0}{t} \right \rangle \to \dual{A_x y}{y}
\end{equation}
for $t \searrow 0$,
where $y$ is the unique solution to \eqref{eq:VI_derivative_again}. If, moreover, $z \mapsto \dual{A_x z}{z}$ is a Legendre form 
in the sense of \cite[Lemma 5.1b)]{ChristofWachsmuth2017:1}, then the convergence of the difference
quotients is even strong. 
\end{corollary}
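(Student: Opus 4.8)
The plan is to reduce the corollary to \cref{thm:sufficient_abstract} by verifying that \cref{assumption:sensitivity} is in force. As a preliminary step we may assume w.l.o.g.\ that $p_0 = 0$: replacing $A$ with $\tilde A(p,x) := A(p_0 + p, x)$ merely shifts the argument of the solution map, leaves Fréchet differentiability in $(0,\bar x_0)$ and the partial derivatives $A_p, A_x$ unchanged, and transforms $a_0 = -A(p_0,\bar x_0)$ into $a_0 = -A(0,\bar x_0)$ and $S(p_0)\cap B_R^X(\bar x_0) = \{\bar x_0\}$ into $S(0)\cap B_R^X(\bar x_0) = \{\bar x_0\}$, while all remaining hypotheses are preserved.

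Next I would establish that \eqref{eq:VI_derivative_again} is solvable for every $q \in P$. Fix such a $q$, put $q_t \equiv q$, and note that for all sufficiently small $t > 0$ the point $tq$ lies in $B_{ct}^P(0)$ with $c := \max(\norm{q}_P,1)$, so the local nonemptiness of $S$ lets us choose $\bar x_t \in S(tq)\cap B_R^X(\bar x_0)$, and the upper Lipschitz estimate forces $\bar x_t \in B_{Lct}^X(\bar x_0)$, i.e.\ \eqref{eq:LipschitzEstimate} holds. Since the difference quotients $y_t := (\bar x_t - \bar x_0)/t$ and the (constant) parameters are bounded, Fréchet differentiability of $A$ in $(0,\bar x_0)$ yields the expansion \eqref{eq:taylor_A} with $\norm{r(t)}_Y / t \to 0$. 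Hence \cref{assumption:sensitivity} is satisfied, and the first part of \cref{thm:sufficient_abstract} shows that \eqref{eq:VI_derivative} $=$ \eqref{eq:VI_derivative_again} admits a solution; since it has at most one solution by hypothesis, it is uniquely solvable for every $q \in P$.

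Now let $\{q_t\}_{0<t<t_0}$ with $q_t \to q$ and $\{\bar x_t\}_{0<t<t_0}$ with $\bar x_t \in S(p_0 + t q_t)\cap B_R^X(\bar x_0)$ be given, set $y_t := (\bar x_t - \bar x_0)/t$, and recall $\bar x_0 \in S(0)$. Shrinking $t_0$ if necessary, the boundedness $C := \sup_{0<t<t_0}\norm{q_t}_P < \infty$ gives $t q_t \in B_{Ct}^P(0)$, so the upper Lipschitz property yields \eqref{eq:LipschitzEstimate} with constant $LC$, and Fréchet differentiability gives \eqref{eq:taylor_A}; thus \cref{assumption:sensitivity} holds. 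One of the two alternatives of \cref{thm:sufficient_abstract} is precisely one of the assumed scenarios \ref{item:epi_scenario1}, \ref{item:epi_scenario2}, and \eqref{eq:VI_derivative} coincides with the uniquely solvable \eqref{eq:VI_derivative_again}; hence \cref{thm:sufficient_abstract} provides the unique solution $y$ of \eqref{eq:VI_derivative_again} together with $y_t \weaklystar y$ and $\dual{A_x y_t}{y_t} \to \dual{A_x y}{y}$ as $t \searrow 0$, which is exactly \eqref{eq:randomconvergence42}.

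Finally, if $z \mapsto \dual{A_x z}{z}$ is a Legendre form, the weak-$\star$ convergence $y_t \weaklystar y$ together with $\dual{A_x y_t}{y_t} \to \dual{A_x y}{y}$ forces the strong convergence $y_t \to y$ — this is exactly the implication a Legendre form supplies. I expect no conceptual difficulty in this argument; the only points needing care are quantitative: aligning the ``small enough $t$'' range of the upper Lipschitz hypothesis, the boundedness of $\{q_t\}$, and the prescribed interval $0 < t < t_0$ by shrinking $t_0$ so that the inclusion $\bar x_t \in S(tq_t)$ demanded by \cref{thm:sufficient_abstract} holds on the whole (shrunk) interval, together with the harmless rescaling by $c = \max(\norm{q}_P,1)$ in the solvability argument.
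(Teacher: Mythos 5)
Your proposal is correct and follows exactly the paper's route: translate by $p_0$, verify \cref{assumption:sensitivity} from the upper Lipschitz hypothesis and the Fréchet differentiability of $A$, and invoke \cref{thm:sufficient_abstract} (whose byproduct gives solvability of \eqref{eq:VI_derivative_again}, hence unique solvability), with the Legendre-form property upgrading the convergence to strong. The paper's own proof is just a one-line reduction to the same theorem; your write-up merely spells out the verification in more detail.
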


\begin{proof}
If we start with a family of parameters $\{q_t\}_{0 < t < t_0} \subset P$ and a family of solutions $\{\bar x_t\}_{0 < t < t_0} \subset X$ 
as in the definition of the weak-$\star$ Hadamard 
directional differentiability, then we are precisely in the situation of \cref{assumption:sensitivity} 
(after translation by $p_0$) 
and \cref{thm:sufficient_abstract} immediately implies \eqref{eq:randomconvergence42}. To obtain the claim with the strong convergence, 
we just have to use the definition of the Legendre form. This completes the proof. 
\end{proof}

We remark that a special case of the above corollary may be found in \cite[Theorem~5.5]{BonnansShapiro2000}.

In practice, it is typically hard to check whether a given functional $j$ is twice epi-differentiable in a point $x \in  \dom(j)$ for some $g \in \partial j(x)$, cf., e.g., the calculations 
in \cite[Section 4]{ChristofMeyer2016} and \cite[Section 6.2]{ChristofWachsmuth2017:1}. The following lemma turns out to be helpful in this context not only in practical
applications but also for theoretical considerations.

\begin{lemma}[Criterion for Second-Order Epi-Differentiability]
	\label{lem:obtain_twice_epi}
	Let $x \in  \dom(j)$ and $g \in \partial j(x)$ be given.
	Suppose that there exist a set $Z \subset \KK\jxg$
	and a functional $Q : \KK\jxg \to [0,\infty)$
	such that 
	\begin{enumerate}
		\item
			for all $z \in \KK\jxg$ it holds $Q\jxg(z) \ge Q(z)$,
			\label{criterion:i}
		\item
			for all $z \in Z$
			and all $\{t_n\} \subset \R^+$ with $t_n \searrow 0$, there exists a sequence $\{z_n\}\subset X$ satisfying  $z_n \weaklystar z$, $\norm{z_n}_X \to \norm{z}_X$, 
			and \label{criterion:ii}
			\begin{equation*}
				Q(z)
				=
				\lim_{n \to \infty} \frac{j(x + t_n \, z_n) - j(x) - t_n \dual{g}{z_n}}{t_n^2/2}
				,
			\end{equation*}			
		\item
			for all $z \in \KK\jxg$ there exists a sequence $\{z_k\} \subset Z$ with $z_k \weaklystar z$, $\norm{z_k}_X \to \norm{z}_X$
			and
			$Q(z) \ge \liminf_{k \to \infty} Q(z_k)$.
			\label{criterion:iii}
	\end{enumerate}
	Then, $Q = Q\jxg$ and $j$ is strictly twice epi-differentiable in $x$ for $g$. 
	If, moreover, the sequences in \ref{criterion:ii} and \ref{criterion:iii} can be chosen to be strongly convergent, 
	then $j$ is even strongly twice epi-differentiable in $x$ for $g$. 
\end{lemma}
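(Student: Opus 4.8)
The plan is to prove both assertions simultaneously. For a fixed direction $z \in \KK\jxg$ and a fixed null sequence, I will produce \emph{one} sequence $\hat z_n \weaklystar z$ with $\norm{\hat z_n}_X \to \norm{z}_X$ along which the second-order difference quotients converge, and the limit will be forced to equal both $Q(z)$ and $Q\jxg(z)$; this yields at once $Q = Q\jxg$ on $\KK\jxg$ and the strict twice epi-differentiability of $j$ in $x$ for $g$. So fix $z \in \KK\jxg$ and $\{t_n\} \subset \R^+$ with $t_n \searrow 0$. By hypothesis \ref{criterion:iii} there is a sequence $\{z_k\} \subset Z$ with $z_k \weaklystar z$, $\norm{z_k}_X \to \norm{z}_X$ and $Q(z) \ge \liminf_{k \to \infty} Q(z_k)$; passing to the subsequence realizing this limit inferior, I may assume that $Q(z_k) \to \ell_0$ for some $\ell_0 \le Q(z)$. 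Since each $z_k \in Z$, hypothesis \ref{criterion:ii}, applied to this same sequence $\{t_n\}$, provides for every $k$ a sequence $\{z_{k,n}\}_n$ with $z_{k,n} \weaklystar z_k$, $\norm{z_{k,n}}_X \to \norm{z_k}_X$ and
\begin{equation*}
	\frac{j(x + t_n\,z_{k,n}) - j(x) - t_n\dual{g}{z_{k,n}}}{t_n^2/2} \;\longrightarrow\; Q(z_k) \qquad\text{as } n\to\infty .
\end{equation*}
(For $z \in Z$ one may take $z_k \equiv z$, so $Q\jxg(z) = Q(z)$ is immediate here; the substance is in the general case.)

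The second step is the diagonalization familiar from the proof of \cref{lem:more_properties_of_Q}\,\ref{item:more_properties_of_Q_2}. Assume first that $Y$ is separable with a countable dense subset $\{w_i\}_{i \in \N}$. Choose a strictly increasing sequence $\{N_k\}$ with $N_1 := 1$ such that
\begin{equation*}
	\sum_{i=1}^k \abs{\dual{w_i}{z_{k,n} - z_k}} + \bigabs{\norm{z_{k,n}}_X - \norm{z_k}_X} + \Bigabs{ Q(z_k) - \frac{j(x + t_n\,z_{k,n}) - j(x) - t_n\dual{g}{z_{k,n}}}{t_n^2/2} } \le \frac1k
\end{equation*}
for all $n \ge N_k$, put $k_n := \sup\{k \in \N \mid n \ge N_k\}$ and $\hat z_n := z_{k_n,n}$. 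Then $k_n \to \infty$ monotonically, $n \ge N_{k_n}$, and the estimate together with $z_{k_n} \weaklystar z$, $\norm{z_{k_n}}_X \to \norm{z}_X$ and $Q(z_{k_n}) \to \ell_0$ (and the boundedness of $\norm{z_{k_n}}_X$) yields $\hat z_n \weaklystar z$, $\norm{\hat z_n}_X \to \norm{z}_X$ and $(j(x + t_n\hat z_n) - j(x) - t_n\dual{g}{\hat z_n})/(t_n^2/2) \to \ell_0$. When $Y$ is reflexive but not separable, the reduction to the separable case proceeds as in \cite[Proof of Theorem~6.24]{Kuttler1997}.

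It remains to close the loop. Since $\hat z_n \weaklystar z$ and $t_n \searrow 0$, the pair $(\hat z_n, t_n)$ is admissible in \cref{def:weak_star_subderivative}, whence $Q\jxg(z) \le \ell_0$. Combining this with $\ell_0 \le Q(z)$ (from \ref{criterion:iii}) and $Q(z) \le Q\jxg(z)$ (from \ref{criterion:i}) forces $Q\jxg(z) = Q(z) = \ell_0$, so that $\hat z_n$ is in fact a strict recovery sequence for the direction $z$. Since $z \in \KK\jxg$ and $\{t_n\}$ were arbitrary — and since for $z \notin \KK\jxg$ recovery sequences are trivial, cf.\ \cref{remark:secondorderepidifferentiability}\,\ref{remark:secondorderepidifferentiability:iii} — we conclude that $Q = Q\jxg$ on $\KK\jxg$ and that $j$ is strictly twice epi-differentiable in $x$ for $g$. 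If the sequences in \ref{criterion:ii} and \ref{criterion:iii} may be taken strongly convergent, one additionally includes the summand $\norm{z_{k,n} - z_k}_X$ in the displayed estimate, so that $\hat z_n \to z$ strongly, upgrading the conclusion to strong twice epi-differentiability. I expect the delicate point to be the bookkeeping in the diagonal step — forcing one sequence $\hat z_n$ to converge to $z$ in the appropriate mode, to have $\norm{\hat z_n}_X \to \norm{z}_X$, and to carry the difference quotients down to $\ell_0$ — together with the observation that the construction a priori only delivers the possibly smaller value $\ell_0$, which is then squeezed back up to $Q(z)$ by the chain $Q(z) \le Q\jxg(z) \le \ell_0 \le Q(z)$.
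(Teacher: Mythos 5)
Your proof is correct and follows essentially the same route as the paper's: reduce to the set $Z$ via hypothesis (iii), use (ii) to get recovery sequences for the approximating $z_k$, diagonalize exactly as in the proof of \cref{lem:more_properties_of_Q}\,\ref{item:more_properties_of_Q_2}, and close the squeeze $Q\jxg(z) \le \ell_0 \le Q(z) \le Q\jxg(z)$. The only cosmetic difference is that you pass to the subsequence realizing the liminf in (iii) up front, which makes the limit $\ell_0$ of the diagonal difference quotients explicit; the paper carries the $\liminf_k Q(z_k)$ through the same chain of inequalities instead.
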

\begin{proof}
	We first prove the strict second-order epi-differentiability.
	From the properties of $Q$ and the definition of $Q\jxg$,
	we immediately obtain $Q = Q\jxg$ on $Z$.
	Assume now that a $z \in \KK\jxg$ and a sequence $\{t_n\} \subset \R^+$ with $t_n \searrow 0$ are given.
	Then, \ref{criterion:iii} implies that we can find a sequence 
	$\{z_k\} \subset Z$ with $\smash{z_k \weaklystar z}$, $\norm{z_k}_X \to \norm{z}_X$
	and
	$Q(z) \ge \liminf_{k \to \infty} Q(z_k)$.
	From \ref{criterion:ii}, we obtain further that for each  $z_k$
	there exists a sequence $\{z_{k,n}\}$ satisfying
	\begin{equation*}
		z_{k,n} \weaklystar z_k,\quad 
		\norm{z_{k,n}}_X \to \norm{z_k}_X,\quad\text{and}\quad
		 \frac{j(x + t_n \, z_{k,n}) - j(x) - t_n \dual{g}{z_{k,n}}}{t_n^2/2} \to Q(z_k)
	\end{equation*}
	for $n \to \infty$. Using exactly the same argumentation as in the proof of \cref{lem:more_properties_of_Q} \ref{item:more_properties_of_Q_2},
	we can now construct a sequence $\{\hat z_n\}$ with $\smash{\hat z_n \weaklystar z}$, $\norm{\hat z_n}_X \to \norm{z}_X$, and 
	\begin{equation*}
		 Q\jxg(z) \leq \liminf_{n \to \infty}\frac{j(x + t_n \, \hat z_{n}) - j(x) - t_n \dual{g}{\hat z_n}}{t_n^2/2} =  \liminf_{k \to \infty} Q(z_k) \leq Q(z) \leq Q\jxg(z),
	\end{equation*}
	where the first and the last estimate follow from \cref{def:weak_star_subderivative} and \ref{criterion:i}, respectively.  
	The above implies that $\{\hat z_n\}$ is a recovery sequence for $z$ as in the definition of the strict second-order epi-differentiability. 
	Since $z \in \KK\jxg$ was arbitrary, the first claim of the lemma now follows immediately, 
	cf.\  \cref{remark:secondorderepidifferentiability} \ref{remark:secondorderepidifferentiability:iii}. 

	To obtain the strong second-order epi-differentiability under the assumption of strong convergence in \ref{criterion:ii} and \ref{criterion:iii},
	we can proceed along exactly the same lines (just modify the selection argument in the proof of \cref{lem:more_properties_of_Q} \ref{item:more_properties_of_Q_2} accordingly).
\end{proof}

Using \cref{lem:obtain_twice_epi}, we obtain, e.g., the following result.

\begin{corollary}[Indicator Functions of Extended Polyhedric Sets]
\label{cor:extendedpolyhedric}
Let $K \subset X$ be a closed, convex, nonempty set, and denote by $\delta_K : X \to \{0, \infty\}$ the indicator function of $K$. Suppose that $X$ is reflexive, 
and assume that
an $x \in K$ and a $g \in \partial \delta_K (x)$ are given such that $K$ is extended polyhedric in $x$ for $g$ in the sense of 
\cite[Definition 3.52]{BonnansShapiro2000},
i.e., such that 
\begin{equation*}
\TT_K(x) \cap g^\perp = \mathrm{cl}\bigh(){ \left \{ z \in \TT_K(x)  \mid 0 \in \TT_K^{2}(x,z)  \right \}  \cap  g^\perp }
\end{equation*}
holds, where $\TT_K(x) := \mathrm{cl}(\R^+(K - x))$ and 
\begin{gather*}
\TT_K^{2}(x,z)
:=
\Bigh\{\}{
r \in X :  \dist\bigh(){ x + t \,z + {\textstyle\frac{1}{2}}\,t^2 \, r, K }
=
o(t^2) \text{ as } t \searrow 0
}
\end{gather*}
denote the tangent cone and the second-order tangent set at $x$ and $(x,z)$, respectively. Then, $\delta_K$ is strongly twice epi-differentiable in $x$ for $g$ and it holds
\begin{equation*}
\KK_{\delta_K}^{x,g}  = \TT_K(x) \cap g^\perp \quad \text{and}\quad  Q_{\delta_K}^{x,g}(z) = 0 \quad \forall z \in \KK_{\delta_K}^{x,g}.
\end{equation*}
\end{corollary}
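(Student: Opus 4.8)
The plan is to apply \cref{lem:obtain_twice_epi} with the functional $Q \equiv 0$ on the reduced critical cone and with
$Z := \{z \in \TT_K(x) \mid 0 \in \TT_K^{2}(x,z)\} \cap g^\perp$, and to combine this with a direct evaluation of the second subderivative. As preliminaries I would record two facts. First, since $g \in \partial \delta_K(x)$ is by definition equivalent to $\dual{g}{w - x} \le 0$ for all $w \in K$, the linear functional $\dual{g}{\cdot}$ is nonpositive on the convex cone $\R^+(K - x)$ and hence, by continuity, on its closure $\TT_K(x)$. Second, using $\delta_K(x) = 0$, for any $z \in X$, any $t_n \searrow 0$ and any $z_n \weaklystar z$ the difference quotient in \cref{def:weak_star_subderivative} equals $+\infty$ if $x + t_n z_n \notin K$ and equals $-2\dual{g}{z_n}/t_n \ge 0$ if $x + t_n z_n \in K$; in particular $Q_{\delta_K}^{x,g} \ge 0$ on all of $X$, which already gives condition~\ref{criterion:i} of \cref{lem:obtain_twice_epi}.

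Next I would prove the inclusion $\KK_{\delta_K}^{x,g} \subseteq \TT_K(x) \cap g^\perp$. If $z$ lies in the reduced critical cone, choose $t_n \searrow 0$ and $z_n \weaklystar z$ along which the difference quotient has a finite $\liminf$; since this quotient is $+\infty$ whenever $x + t_n z_n \notin K$, passing to a subsequence we may assume $w_n := x + t_n z_n \in K$ for all $n$ and $-2\dual{g}{z_n}/t_n$ bounded. Then $z_n = t_n^{-1}(w_n - x) \in \R^+(K - x) \subseteq \TT_K(x)$ and $\dual{g}{z_n} = O(t_n) \to 0$; since $\TT_K(x)$ is closed and convex, hence weakly-$\star$ closed by reflexivity, $z \in \TT_K(x)$, and passing to the limit in $\dual{g}{z_n}$ yields $\dual{g}{z} = 0$, so $z \in \TT_K(x) \cap g^\perp$.

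To verify the remaining hypotheses of \cref{lem:obtain_twice_epi} (with \emph{strongly} convergent sequences, so as to obtain the strong variant), let $z \in Z$ and $t_n \searrow 0$. The condition $0 \in \TT_K^{2}(x,z)$ furnishes $w_n \in K$ with $\norm{x + t_n z - w_n}_X = o(t_n^2)$; setting $z_n := t_n^{-1}(w_n - x)$ gives $z_n \to z$ strongly (hence $\norm{z_n}_X \to \norm{z}_X$), $x + t_n z_n = w_n \in K$, and $\dual{g}{z_n} = \dual{g}{z_n - z} = o(t_n)$ because $\dual{g}{z} = 0$, so the quotient $-2\dual{g}{z_n}/t_n \to 0 = Q(z)$. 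This is condition~\ref{criterion:ii}, and since the liminf of this recovery quotient is $0$ while $Q_{\delta_K}^{x,g} \ge 0$, it also shows $Q_{\delta_K}^{x,g}(z) = 0$, i.e.\ $Z \subseteq \KK_{\delta_K}^{x,g}$ as the lemma requires. For condition~\ref{criterion:iii}, any $z \in \KK_{\delta_K}^{x,g}$ lies in $\TT_K(x) \cap g^\perp = \mathrm{cl}(Z)$ by the previous paragraph and the extended polyhedricity of $K$, so there are $z_k \in Z$ with $z_k \to z$ strongly (hence $\norm{z_k}_X \to \norm{z}_X$), and $Q(z) = 0 = \liminf_k Q(z_k)$ trivially. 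Then \cref{lem:obtain_twice_epi} yields $Q_{\delta_K}^{x,g} = Q$ on $\KK_{\delta_K}^{x,g}$, i.e.\ $Q_{\delta_K}^{x,g}$ vanishes there, together with the strong twice epi-differentiability of $\delta_K$ in $x$ for $g$.

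It only remains to upgrade the inclusion of the second paragraph to $\TT_K(x) \cap g^\perp \subseteq \KK_{\delta_K}^{x,g}$. Given $z \in \TT_K(x) \cap g^\perp = \mathrm{cl}(Z)$, pick $z_k \in Z$ with $z_k \to z$; for a fixed $t_n \searrow 0$ each $z_k \in \KK_{\delta_K}^{x,g}$ has a strongly convergent recovery sequence by the strong twice epi-differentiability just established, and running the diagonal selection from the proof of \cref{lem:obtain_twice_epi} (cf.\ also \cref{lem:more_properties_of_Q}\,\ref{item:more_properties_of_Q_2}) produces a strongly convergent recovery sequence for $z$ with vanishing limit, so $z \in \KK_{\delta_K}^{x,g}$; hence $\KK_{\delta_K}^{x,g} = \TT_K(x) \cap g^\perp$. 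The only genuinely non-formal ingredient is this passage from the directions $z \in Z$, where second-order tangent information is available, to the whole set $\mathrm{cl}(Z)$ — and precisely this diagonalization is the one already carried out for \cref{lem:obtain_twice_epi}, so no new difficulty arises; everything else reduces to the definitions of the normal and second-order tangent sets and to the elementary bound $Q_{\delta_K}^{x,g} \ge 0$.
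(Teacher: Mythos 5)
Your proposal is correct and follows essentially the same route as the paper: apply \cref{lem:obtain_twice_epi} with $Q\equiv 0$ and $Z=\{z\in\TT_K(x)\mid 0\in\TT_K^2(x,z)\}\cap g^\perp$, use the second-order tangent condition to build strong recovery sequences with vanishing quotient, use Mazur to get $\KK_{\delta_K}^{x,g}\subseteq\TT_K(x)\cap g^\perp$, and use extended polyhedricity for the density needed in condition~\ref{criterion:iii}. The only cosmetic difference is at the very end: the paper closes the inclusion $\TT_K(x)\cap g^\perp\subseteq\KK_{\delta_K}^{x,g}$ by citing the weak-$\star$ lower semicontinuity of $Q_{\delta_K}^{x,g}$ from \cref{lem:more_properties_of_Q}~\ref{item:more_properties_of_Q_2}, whereas you re-run the underlying diagonal selection directly, which amounts to the same argument.
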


\begin{proof}
We use \cref{lem:obtain_twice_epi} to prove the claim: Set $Z := \left \{ z \in \TT_K(x)  \mid 0 \in \TT_K^{2}(x,z)  \right \}  \cap  g^\perp$. 
Then, the definition of $\TT_K^{2}(x,z)$ implies that  for every $z \in Z$ and every $\{t_n\} \subset \R^+$ with $t_n \searrow 0$ 
there exists a sequence $\{r_n\} \subset X$ with $x + t_n z +\frac12 t_n^2 r_n \in K$ and $r_n \to 0$. 
The latter yields (cf.\ \cref{def:weak_star_subderivative})
\begin{equation*}
0 \leq Q_{\delta_K}^{x,g}(z)  \leq 
\liminf_{n \to \infty}  \left \langle - g,      r_n \right \rangle =0,
\end{equation*}
i.e., $Q_{\delta_K}^{x,g}(z) = 0$ for all $z \in Z$ and $Z \subset \KK_{\delta_K}^{x,g}$. 
From the definition of $Q_{\delta_K}^{x,g}$ and the lemma of Mazur, 
we obtain further that  $\KK_{\delta_K}^{x,g} \subset \TT_K(x) \cap g^\perp $. 
This implies in tandem with the extended polyhedricity of $K$ in $x$ for $g$ that the set $Z$ is dense in $\KK_{\delta_K}^{x,g}$. 
Defining $Q : \KK_{\delta_K}^{x,g}   \to [0, \infty)$, $Q(z) := 0$ for all $z \in \KK_{\delta_K}^{x,g}$, we now arrive exactly at the situation of 
\cref{lem:obtain_twice_epi} (with strong convergence). 
This allows us to deduce that $Q  = Q_{\delta_K}^{x,g} \equiv 0$ holds on $\KK_{\delta_K}^{x,g}$ and that $\delta_K$ is strongly twice epi-differentiable in $x$ for $g$. 
From \cref{lem:more_properties_of_Q} \ref{item:more_properties_of_Q_2}, we now obtain that $ Q_{\delta_K}^{x,g} : X \to [0, \infty]$ is 
weakly-$\star$ lower semicontinuous. 
This yields in combination with the density of $Z$ in 
$\TT_K(x) \cap g^\perp$ and the inclusion $\KK_{\delta_K}^{x,g} \subseteq \TT_K(x) \cap g^\perp$ that $\KK_{\delta_K}^{x,g} = \TT_K(x) \cap g^\perp$ 
and completes the proof. 
\end{proof}

Note that, in the situation of \cref{cor:extendedpolyhedric}, it is always true that
\begin{equation*}
\R^+(K - x) \subset  \left \{ z \in \TT_K(x)  \mid 0 \in \TT_K^{2}(x,z)  \right \},
\end{equation*}
cf.\ the definition of the second-order tangent set $\TT_K^{2}(x,z)$. 
This implies that every set that is polyhedric in the sense of \cite{Haraux1977} is also extended polyhedric in the sense of \cite{BonnansShapiro2000}.
In particular, we may combine \cref{corollary:tangible} with \cref{cor:extendedpolyhedric} to obtain a generalization of the classical differentiability result of
\cite{Mignot1976}, cf.\ also the results in \cite[Example 4.6]{Do1992} in this context.

We point out that there exist closed convex sets that satisfy the condition of extended polyhedricity but violate that of polyhedricity.
An easy example is the set 
\begin{equation*}
	\{0\} \cup \conv\bigh\{\}{ (1/n, 1/n^4) \in \R^2 \mid n \in \Z } \subset \R^2
	.
\end{equation*}

\section{Elliptic Variational Inequalities in Hilbert Spaces}
\label{sec:elliptic}

Having studied the very general setting of \cref{sec:abstract_analysis}, we now turn our attention to elliptic variational inequalities 
in Hilbert spaces. What is remarkable about VIs of this type is that the second-order epi-differentiability 
of the functional $j$ is not only sufficient for the directional differentiability of the solution map $S$  but also necessary. 
More precisely, we have the following result.

\begin{theorem}[Directional Differentiability for Elliptic Variational Inequalities]
\label{th:elliptic}
Let $X, A, j$ be as in \cref{asm:data}. Suppose that $X$ is a Hilbert space and assume that a $p_0 \in P$ and an $R>0$ are given 
such that the VI 
\begin{equation}
\label{eq:VI_again33}
	\bar x \in X,\qquad 
	\dual{A(p, \bar x)}{x - \bar x} + j(x) - j(\bar x) \ge 0
	\qquad\forall x \in X
\end{equation}
admits a unique solution $S(p) \in X$ for all $p \in B_R^P(p_0)$ and such that there exist constants $c,C >0$ with
\begin{equation}
\label{eq:monotonicityassumption}
 c \|x_1 - x_2\|_X^2 \leq \left \langle A(p_0, x_1) - A(p_0, x_2), x_1 - x_2 \right \rangle\quad \forall x_1, x_2 \in  X
\end{equation}
and 
\begin{equation}
\label{eq:lipschitzassumption}
\|A(p, x) - A(p_0, x)\|_{Y} \leq C \|p - p_0\|_P\quad \forall x \in X\quad \forall p \in B_R^P(p_0).
\end{equation}
Write $\bar x_0 := S(p_0)$ and $a_0 :=  -A(p_0, \bar x_0)$, and suppose that $A$ is Fréchet differentiable in $(p_0, \bar x_0)$ with partial derivatives $A_p \in \LL(P, Y)$ and $A_x \in \LL(X, Y)$.
Assume that $A_p$ is surjective. Then, the following statements are equivalent:
\setenumerate{label=(\Roman*)}
	\begin{enumerate}
		\item
			The solution map $S : B_R^P(p_0) \to X$ is strongly Hadamard directionally differentiable in $p_0$ in all directions $q \in P$. 
			\label{item:ddiff}
		\item
			The functional $j$ is strongly twice epi-differentiable in $\bar x_0$ for $a_0$.
			\label{item:ediff}
	\end{enumerate}
\setenumerate{label=(\roman*)}
Moreover, if one of these conditions is satisfied, then the following assertions hold.
\begin{enumerate}

\item $\smash{Q\jxa: X \to [0,\infty] }$ is proper, weakly lower semicontinuous and positively homogeneous of degree two, 
and  $\smash{\KK\jxa}$ is a pointed cone. \label{item:consequence1}

\item The directional derivative $y := S'(p_0;q) \in X$ in $p_0$ in a direction $q$ is uniquely characterized by the variational inequality \label{item:consequence2}
	\begin{equation}
	\label{eq:diffVIagain}
		y \in X,\qquad 
		\dual{A_p q + A_x y}{z - y}
		+ \frac12 Q\jxa(z)
		- \frac12 Q\jxa(y)
		\ge
		0
		\qquad\forall z \in X.
	\end{equation}
	Moreover,
	\begin{equation*}
		Q\jxa(y)
		=
		\lim_{t \searrow 0} \frac{j(\bar x_0 + t \, y_t) - j(\bar x_0) - t \, \dual{a_0}{y_t}}{t^2/2}
		,
	\end{equation*}
	where $y_t := (S(p_0 + t \, q) - \bar x_0 ) / t$,
	i.e.,
	the difference quotients $y_t$ are a recovery sequence for $y$.

\item  For every $z \in \smash{\KK\jxa}$ there exists a sequence $\{y_k\} \subset S'(p_0; P)$ with \label{item:consequence3}
\begin{gather*}
y_k \to z\quad\text{and}\quad  Q\jxa(y_k) \nearrow  Q\jxa(z)
\end{gather*}
as $k \to \infty$. In particular, $S'(p_0; P)$ is a dense subset of  $\smash{\KK\jxa}$.
\end{enumerate}
\end{theorem}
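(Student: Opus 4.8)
The plan is to prove the equivalence \ref{item:ddiff}~$\Leftrightarrow$~\ref{item:ediff} first and then harvest \ref{item:consequence1}--\ref{item:consequence3} as byproducts of the two implications. The key structural observation is that the strong monotonicity \eqref{eq:monotonicityassumption} together with the Fréchet differentiability of $A$ forces $A_x$ to be coercive, i.e.\ $\dual{A_x z}{z} \ge c\norm{z}_X^2$ for all $z \in X$; in particular the bilinear form $z \mapsto \dual{A_x z}{z}$ is a Legendre form on the Hilbert space $X$, and \cref{assumption:sensitivity} is verified for \emph{every} choice of $q_t \to q$ and $\bar x_t := S(p_0 + t q_t)$ — the Lipschitz estimate \eqref{eq:LipschitzEstimate} comes from \eqref{eq:monotonicityassumption}--\eqref{eq:lipschitzassumption} by the standard Lipschitz-stability argument for strongly monotone VIs, and the Taylor expansion \eqref{eq:taylor_A} from Fréchet differentiability of $A$. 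Hence we are always in the setting of \cref{sec:abstract_analysis}, case~\ref{item:sufficient_abstract_2}, and moreover the VI \eqref{eq:VI_derivative} has at most one solution because coercivity of $A_x$ makes $z \mapsto \dual{A_x z}{z} + \tfrac12 Q\jxa(z)$ strictly convex-like: if $y_1, y_2$ both solve \eqref{eq:VI_derivative}, testing the inequality for $y_1$ with $z = y_2$ and vice versa and adding gives $\dual{A_x(y_1 - y_2)}{y_1 - y_2} \le 0$, whence $y_1 = y_2$.

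\textbf{Direction \ref{item:ediff}~$\Rightarrow$~\ref{item:ddiff}.} Assume $j$ is strongly twice epi-differentiable in $\bar x_0$ for $a_0$. For a fixed direction $q$ set $q_t := q$ and $\bar x_t := S(p_0 + t q)$. By the preceding paragraph we may invoke \cref{thm:sufficient_abstract}\,\ref{item:sufficient_abstract_2}: the difference quotients $y_t$ have a weak-$\star$ (here: weak) accumulation point, every such $y$ solves \eqref{eq:VI_derivative}, and $\dual{A_x y_{t_n}}{y_{t_n}} \to \dual{A_x y}{y}$ along the converging subsequence. Because \eqref{eq:VI_derivative} has at most one solution, the whole family $y_t$ converges weakly to the unique $y$, with $\dual{A_x y_t}{y_t} \to \dual{A_x y}{y}$. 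Since $z \mapsto \dual{A_x z}{z}$ is a Legendre form, weak convergence $y_t \weakly y$ plus convergence of the quadratic form values upgrades to strong convergence $y_t \to y$ in $X$. That is exactly strong Hadamard directional differentiability in direction $q$ with $S'(p_0; q) = y$, and the recovery-sequence identity in \ref{item:consequence2} is \eqref{eq:recovery_sequence} of \cref{thm:necessary_condition_1} (applicable now that strong convergence holds). This proves \ref{item:ddiff}, and along the way the characterization \eqref{eq:diffVIagain}, establishing \ref{item:consequence2}.

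\textbf{Direction \ref{item:ddiff}~$\Rightarrow$~\ref{item:ediff}.} This is where the surjectivity of $A_p$ enters and is the main obstacle. Suppose $S$ is strongly directionally differentiable at $p_0$. Given $z \in \KK\jxa$ and a sequence $t_n \searrow 0$, we must produce a recovery sequence $z_n \to z$ with $Q\jxa(z_n)$-values converging to $Q\jxa(z)$; by \cref{remark:secondorderepidifferentiability}\,\ref{remark:secondorderepidifferentiability:iii} it suffices to treat $z \in \KK\jxa$. The idea is: first show that for every $z$ in the \emph{range} $S'(p_0; P)$ of the derivative, the difference quotients of the corresponding direction furnish a strong recovery sequence (this is \eqref{eq:recovery_sequence} applied to that direction, which holds by \cref{thm:necessary_condition_1} since the derivative exists strongly). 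Concretely, for $q \in P$ with $y := S'(p_0;q)$ we get $y_t \to y$ and $Q\jxa(y) = \lim_{t}\bigl(j(\bar x_0 + t y_t) - j(\bar x_0) - t\dual{a_0}{y_t}\bigr)/(t^2/2)$, so $\{y_t\}$ (reparametrized to the grid $\{t_n\}$) is a strong recovery sequence for $y$. Thus $j$ is strongly twice epi-differentiable \emph{in every direction lying in $S'(p_0; P)$}. It remains to show $S'(p_0; P)$ is dense in $\KK\jxa$ in a sufficiently strong sense and that $Q\jxa$ restricted to $S'(p_0; P)$ controls $Q\jxa$ on all of $\KK\jxa$ — precisely the hypotheses \ref{criterion:ii}, \ref{criterion:iii} of \cref{lem:obtain_twice_epi} with $Z := S'(p_0; P) \cap \KK\jxa$ and $Q := Q\jxa|_{\KK\jxa}$; hypothesis \ref{criterion:i} is trivial. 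To get density: given $z \in \KK\jxa$, pick a recovery sequence $z_n \weaklystar z$ from \cref{def:weak_star_subderivative}; the inequality \eqref{eq:linearized-VI}-type testing shows $z$ solves \eqref{eq:VI_derivative} for a suitable right-hand side of the form $A_p q + A_x z$, and since $A_p$ is surjective we can realize any element of $Y$ as $A_p q$, in particular $-A_x z + (\text{anything making } z \text{ critical})$; more carefully, for $z \in \KK\jxa$ one checks that $z$ itself is the directional derivative $S'(p_0; q_z)$ for the direction $q_z$ chosen via surjectivity so that $A_p q_z = -A_x z - w$ where $w$ is any subgradient-type functional witnessing that $z$ solves \eqref{eq:VI_derivative} — so actually $\KK\jxa \subseteq S'(p_0; P)$ outright. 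If this inclusion holds the density is immediate and \cref{lem:obtain_twice_epi} applies with strong convergence, yielding strong twice epi-differentiability and hence \ref{item:ediff}; it also gives \ref{item:consequence3}. I expect the delicate point to be exactly this last claim: showing that every $z \in \KK\jxa$ arises as $S'(p_0; q)$ for some $q$, which requires combining the one-solution property of \eqref{eq:VI_derivative}, the positive homogeneity of $Q\jxa$, and surjectivity of $A_p$ to construct $q$; the monotone-operator-free argument here will need the finiteness $Q\jxa(z) < \infty$ and the identity \eqref{eq:identity_Q_y} to pin down the correct $q$.

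\textbf{The remaining consequences.} Item \ref{item:consequence1}: positive homogeneity of degree two and pointedness of $\KK\jxa$ are \cref{lem:basic_properties}; $Q\jxa \ge 0$ and properness likewise (as $a_0 \in \partial j(\bar x_0)$); weak lower semicontinuity is \cref{lem:more_properties_of_Q}\,\ref{item:more_properties_of_Q_2}, which applies because \ref{item:ediff} gives strict (indeed strong) twice epi-differentiability. Item \ref{item:consequence2} was obtained in the forward implication. Item \ref{item:consequence3} falls out of the density argument in the backward implication: the $\{y_k\} \subset S'(p_0; P)$ with $y_k \to z$ and $Q\jxa(y_k) \nearrow Q\jxa(z)$ are exactly the sequence $\{z_k\} \subset Z$ supplied by \cref{lem:obtain_twice_epi}\,\ref{criterion:iii}, combined with weak lower semicontinuity of $Q\jxa$ to force $\liminf Q\jxa(y_k) \ge Q\jxa(z)$ and hence the monotone convergence after passing to a subsequence; density of $S'(p_0; P)$ in $\KK\jxa$ is then a restatement.
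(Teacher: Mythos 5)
Your treatment of \ref{item:ediff}~$\Rightarrow$~\ref{item:ddiff} and of consequences \ref{item:consequence1}, \ref{item:consequence2} matches the paper: coercivity of $A_x$ from \eqref{eq:monotonicityassumption}, the Legendre-form upgrade from weak to strong convergence, uniqueness for \eqref{eq:diffVIagain} by the two-solution/addition trick, and the recovery-sequence identity from \cref{thm:necessary_condition_1}. You also correctly identify the right framework for the hard direction: \cref{lem:obtain_twice_epi} with $Z = S'(p_0;P)$, using the difference quotients as strong recovery sequences for each $y \in S'(p_0;P)$.

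The genuine gap is in your density argument for \ref{item:ddiff}~$\Rightarrow$~\ref{item:ediff}. You claim that every $z \in \KK\jxa$ is \emph{itself} a directional derivative, i.e.\ $\KK\jxa \subseteq S'(p_0;P)$, by choosing $q_z$ via surjectivity so that $A_p q_z = -A_x z - w$ where $w$ ``witnesses that $z$ solves \eqref{eq:VI_derivative}.'' Such a $w$ is an element of a subdifferential-like set of $\tfrac12 Q\jxa$ at $z$, and nothing guarantees it exists: at this stage of the proof $Q\jxa$ is not known to be convex, lower semicontinuous, or subdifferentiable anywhere, and in concrete instances the inclusion is strictly false --- e.g.\ in the elastoplasticity example (\cref{lem:subderivative_indicator}) elements of $S'(p_0;P)$ must satisfy $\lambda\,\DD\adjoint\DD y \in (S^2)\dualspace$ while $\KK\jxa$ only requires $\sqrt{\lambda}\,\DD T \in L^2$; this is precisely why the theorem asserts only \emph{density} in \ref{item:consequence3}. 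What is needed instead is, for each $\tilde z \in \KK\jxa$, a sequence $y_n \in S'(p_0;P)$ with $y_n \to \tilde z$ \emph{and} $Q\jxa(y_n) \le Q\jxa(\tilde z)$ so that hypothesis \ref{criterion:iii} of \cref{lem:obtain_twice_epi} holds. The paper obtains this by solving the scaled VIs with operator $(1+n\varepsilon)A_x$ and data $A_p q_n = -(1+n\varepsilon)A_x\tilde z$; testing with $z = \tilde z$ gives $c(1+n\varepsilon)\norm{\tilde z - y_n}_X^2 \le \tfrac12 Q\jxa(\tilde z) - \tfrac12 Q\jxa(y_n)$, which delivers both the strong convergence and the bound on the $Q$-values, and each $y_n$ lies in $S'(p_0;P)$ after absorbing $n\varepsilon A_x y_n$ into $A_p\tilde q_n$ by surjectivity. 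Even this requires a step your proposal does not anticipate: the unique solvability of the scaled VIs for all $q$ is itself nontrivial (no existence theorem applies to the unknown $Q\jxa$) and is established by induction on $n$, with the base case $n=0$ supplied by the assumed directional differentiability and the induction step by a Banach fixed-point contraction argument using the surjectivity of $A_p$.
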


\begin{proof}
We first demonstrate that  part  \ref{item:epi_scenario2} of \cref{corollary:tangible} is applicable.
Set $\bar x_p := S(p)$ for all $p \in B_R^P(p_0)$. Then, \eqref{eq:VI_again33}, \eqref{eq:monotonicityassumption} and \eqref{eq:lipschitzassumption} yield
\begin{equation*}
\begin{aligned}
c \|\bar x_p - \bar x_0 \|_X^2   & \leq \left \langle A(p_0, \bar x_p) - A(p_0, \bar x_0), \bar x_p - \bar x_0 \right \rangle
\\
&\leq \left \langle A(p, \bar x_p) - A(p_0, \bar x_p), \bar x_0 - \bar x_p \right \rangle
\leq C \|p - p_0\|_{P} \|\bar x_p - \bar x_0 \|_X 
\end{aligned}
\end{equation*}
for all $p \in B_R^P(p_0)$, and we obtain
\begin{equation*}
\|\bar x_p - \bar x_0 \|_X  \leq \frac{C}{c} \| p - p_0\|_P\quad \forall p \in B_R^P(p_0). 
\end{equation*}
This shows that the solution map $S : B_R^P(p_0) \to X$ is Lipschitz at $p_0$ (in the classical sense).
From \eqref{eq:monotonicityassumption} and the Fréchet differentiability of $A$ in $(p_0, \bar x_0)$, it follows further
\begin{equation} 
c \|z\|_X^2 \leq \lim_{t \searrow 0} \frac{\left \langle A(p_0, \bar x_0 + tz) - A(p_0, \bar x_0), z \right \rangle}{t} = \left \langle A_x z, z\right \rangle 
\label{eq:derivative_ellipticity}
\end{equation}
for all $z \in X$, i.e.,  the bilinear form $(z_1, z_2) \mapsto \left \langle A_x z_1, z_2\right \rangle$ is elliptic.
Note that this ellipticity implies in particular that the map $z \mapsto \left \langle A_x z, z\right \rangle$ is a Legendre form. 
Consider now the VI \eqref{eq:diffVIagain} and assume that there exists a $q \in P$
such that \eqref{eq:diffVIagain} admits two solutions $y_1$ and $y_2$.
Then, it necessarily holds
\begin{align*}
		\dual{A_p q + A_x y_1}{y_2 - y_1}
		+ \frac12 Q\jxa(y_2)
		- \frac12 Q\jxa(y_1)
		&
		\ge
		0
		\qquad\text{and}
		\\
		\dual{A_p q + A_x y_2}{y_1 - y_2}
		+ \frac12 Q\jxa(y_1)
		- \frac12 Q\jxa(y_2)
		&\ge
		0,
\end{align*}
and we obtain by addition
\begin{equation}
\label{eq:uniqueness_estimate}
0 \geq \dual{ A_x y_1 - A_x y_2}{y_1 - y_2} \geq c \|y_1 - y_2\|_X^2.
\end{equation}
This shows that \eqref{eq:diffVIagain} can have at most one solution. 
If we combine all of the above, we see that the assumptions
of  part \ref{item:epi_scenario2} of \cref{corollary:tangible} are indeed satisfied
under condition \ref{item:ediff}.
The implication  \ref{item:ediff} $\Rightarrow$ \ref{item:ddiff} now follows immediately. 

Next, we check that \ref{item:consequence1} and \ref{item:consequence2}
hold under condition \ref{item:ediff}.
First, we note that
\cref{lem:basic_properties,lem:more_properties_of_Q}
yield that
\ref{item:ediff} implies \ref{item:consequence1}.
Further,
\cref{corollary:tangible} and \cref{thm:necessary_condition_1}
show that
\ref{item:ediff} implies \ref{item:consequence2}.

It thus only remains to prove that \ref{item:ddiff} $\Rightarrow$ \ref{item:ediff} holds 
and that one of the conditions \ref{item:ddiff} and  \ref{item:ediff} entails \ref{item:consequence3}.

So let us assume that \ref{item:ddiff} is satisfied, i.e., suppose that 
the map $S : B_R^P(p_0) \to X$ is strongly Hadamard directionally differentiable in $p_0$ in all directions $q \in P$.
Then, it follows from \cref{thm:necessary_condition_1} 
that the directional derivative $y := S'(p_0; q)$ of $S$ in $p_0$ in a direction $q \in P$ solves \eqref{eq:diffVIagain}. 
Since $S$ is directionally differentiable in $p_0$ in all directions $q \in P$ and since \eqref{eq:diffVIagain} can have at most one solution,
the latter implies that \eqref{eq:diffVIagain} possesses a unique solution $y$ for all $q \in P$ and that, if $y \in X$ solves \eqref{eq:diffVIagain}
with parameter $q$, then $y$ is necessarily identical to the directional derivative $S'(p_0; q)$. 
Consider now for $n \in \mathbb{N}_0$ and
\begin{equation*}
\varepsilon := \frac{c}{2 \|A_x\|_{\LL(X, Y)}} \in (0, \infty)
\end{equation*}
the variational inequality 
\begin{equation}
\label{eq:fixpoint_VI}
		y \in X,\qquad
		\dual{A_p q + (1 + n \varepsilon)A_x y}{z - y}
		+ \frac12 Q\jxa(z)
		- \frac12 Q\jxa(y)
		\ge
		0
		\qquad\forall z \in X.
\end{equation}
We claim that \eqref{eq:fixpoint_VI} admits a unique solution $y \in X$ for all $q \in P$ and all $n \in \mathbb{N}_0$.
Note that this unique solvability is indeed nontrivial since we do not know anything
about the second subderivative $Q\jxa$ at this moment. To show that \eqref{eq:fixpoint_VI} has a unique solution for all $q \in P$, we use induction on $n$. 
Since  \eqref{eq:fixpoint_VI} with $n=0$ is precisely \eqref{eq:diffVIagain}, the induction basis is trivial, 
so let us assume that the unique solvability is proved for some $n \in \mathbb{N}_0$. 
From the surjectivity of $A_p$, we obtain that for every $u \in X$ and every $q \in P$, there exists 
a $\tilde q \in P$ such that $A_p \tilde q = A_p q + \varepsilon A_x u$. The latter implies in combination with the
induction hypothesis that the VI
\begin{equation}
\label{eq:fixpoint_VI_intermediate}
		y \in X,\qquad
		\dual{A_p q + \varepsilon A_x u+ (1 + n \varepsilon)A_x y}{z - y}
		+ \frac12 Q\jxa(z)
		- \frac12 Q\jxa(y)
		\ge
		0
		\qquad\forall z \in X
\end{equation}
admits a unique solution for all $q \in P$ and all $u \in X$.
Fix $q$ and denote the solution operator $X \ni u \mapsto y \in X$ of \eqref{eq:fixpoint_VI_intermediate} by $T$.
Then, it follows analogously to the proof of \eqref{eq:uniqueness_estimate} that $T$ is globally Lipschitz with 
\begin{equation*}
 \|T(u_1) - T(u_2)\|_X  \leq \frac{\varepsilon}{(1 + n \varepsilon) c} \|A_x u_1 - A_x u_2\|_{Y} \leq \frac{1}{2} \|u_1 - u_2\|_X,
\end{equation*}
where the last estimate follows from the definition of $\varepsilon$. 
The above shows that $T$ is a contraction and implies, in combination with Banach's fixed-point theorem, 
that there exists a unique $u \in X$ with $Tu = u$, i.e., with
\begin{equation*}
		u \in X,\qquad
		\dual{A_p q +  (1 + (n+1) \varepsilon)A_x u}{z - u}
		+ \frac12 Q\jxa(z)
		- \frac12 Q\jxa(u)
		\ge
		0
		\qquad\forall z \in X.
\end{equation*}
This completes the induction step.

We are now in the position to prove \ref{item:ediff}. 
Consider an arbitrary but fixed $\tilde z \in \smash{\KK\jxa}$, 
and choose a sequence $\{q_n\} \subset P$ with $A_p q_n = -(1 + n \varepsilon )A_x \tilde z  $ for all $n \in \mathbb{N}$ 
(possible due to surjectivity).
Denote by $y_n$, $n \in \mathbb{N}$,
the unique solution to 
\begin{equation}
\label{eq:randomVI3623426}
		y_n \in X,\qquad
		\dual{A_p q_n + (1 + n \varepsilon)A_x y_n}{z - y_n}
		+ \frac12 Q\jxa(z)
		- \frac12 Q\jxa(y_n)
		\ge
		0
		\qquad\forall z \in X.
\end{equation}
Then, the choice $z = \tilde z$, the definition of $q_n$ and \eqref{eq:derivative_ellipticity} yield
\begin{equation*}
\begin{aligned}
c  (1 + n \varepsilon ) \|\tilde z - y_n\|_X^2  \leq (1 + n \varepsilon )\dual{A_x \tilde z - A_x y_n}{ \tilde z - y_n} 
\leq \frac12 Q\jxa(\tilde z) - \frac12 Q\jxa(y_n),
\end{aligned}
\end{equation*}
and we may deduce that
\begin{equation*}
Q\jxa(y_n) \leq Q\jxa(\tilde z) \qquad \text{and}\qquad
\|\tilde z - y_n\|_X^2 \leq \frac{1}{2c  (1 + n \varepsilon )}  Q\jxa(\tilde z)
\end{equation*}
holds for all $n \in \mathbb{N}$. 
Note that the surjectivity of $A_p$ implies that for every $y_n$ there exists a
$\tilde q_n \in P$ with $A_p \tilde q_n = A_p q_n + n \varepsilon A_x y_n$, 
and that for each such $\tilde q_n$ it necessarily holds
$y_n = S'(p_0; \tilde q_n)$ by \eqref{eq:randomVI3623426} and
\cref{thm:necessary_condition_1}.
We may thus conclude that for every $\tilde z \in \smash{\KK\jxa}$ we can find a sequence $y_n$ with
\begin{gather}
\label{eq:properties_y_n}
y_n  \in S'(p_0; P), \qquad y_n \to \tilde z\qquad\text{and}\qquad  \limsup_{n \to \infty} Q\jxa(y_n) \leq  Q\jxa(\tilde z).
\end{gather}
Using that for each $y \in S'(p_0; P)$ there exists a recovery sequence as in the definition of the strong second-order epi-differentiability, 
see \eqref{eq:recovery_sequence},   
and applying \cref{lem:obtain_twice_epi} with $Q = Q\jxa$ and $Z = S'(p_0; P)$, it now follows straightforwardly that
$j$ is strongly twice epi-differentiable in $\bar x_0$ for $a_0$. This shows that \ref{item:ddiff} indeed implies \ref{item:ediff}. 

To finally prove \ref{item:consequence3}, we note 
 that the strong second-order epi-differentiability of $j$ in $\bar x_0$ for $a_0$ yields the weak lower semi-continuity of 
the functional
 $Q\jxa$, see \cref{lem:more_properties_of_Q}\ref{item:more_properties_of_Q_2}. 
This allows us to continue the last estimate in \eqref{eq:properties_y_n}
as follows
\begin{equation*}
Q\jxa(\tilde z) \geq \limsup_{n \to \infty} Q\jxa(y_n) \geq \liminf_{n \to \infty} Q\jxa(y_n) \geq Q\jxa(\tilde z),
\end{equation*}
i.e., $Q\jxa(y_n) \to Q\jxa(\tilde z)$ as $n \to \infty$. Since $Q\jxa(y_n) \leq Q\jxa(\tilde z)$ holds by the construction of $y_n$,
\ref{item:consequence3} follows immediately. This completes the proof. 
\end{proof}

Some remarks regarding \cref{th:elliptic} are in order.

\begin{remark}\hfill
\label{asm:Hilbert_data}
\begin{enumerate}
\item 
The VIs in \cref{th:elliptic} are, in fact, not elliptic variational inequalities in the classical sense since
we do not assume, e.g., that $j$ is convex and lower semicontinuous. 
The classical setting, i.e., the situation where $P = X\dualspace$ and 
where \eqref{eq:VI_again33}
takes the form 
\begin{equation*}
	\bar x \in X,\qquad 
	\dual{A(\bar x)}{x - \bar x} + j(x) - j(\bar x) \ge \dual{p}{ x - \bar x}
	\qquad\forall x \in X
\end{equation*}
with a strongly monotone and Fréchet differentiable operator $A$ and a convex, lower semicontinuous and proper functional $j$ (cf.\  \cite[Section 1.1]{Adly2017}),
is, of course, covered by  \cref{th:elliptic} as one may easily check. 
Note that, for a classical elliptic variational inequality, the uniqueness and existence of solutions $S(p)$, $p \in X\dualspace$,
follows immediately from the theory of pseudomonotone operators, cf.\ \cite[Section 1.7]{KikuchiOden1980}.

\item We point out that \cref{th:elliptic} significantly generalizes \cite[Theorem~4.3]{Do1992}, where the equivalence of
\ref{item:ddiff} and \ref{item:ediff} is proved under the assumption that $P=X$, 
that $\dual{A(p, x_1)}{x_2} = (x_1 - p, x_2)_X$ for all $x_1, x_2, p \in X$ (where $(.,.)_X$ denotes the inner product in $X$), and
that $j$ is convex, proper and lower semicontinuous (see also \cite[Proposition 6.3]{BorweinNoll1994} in this context). 
Note that we have proved \cref{th:elliptic} without ever
using the concept of protodifferentiability, and that \cref{th:elliptic}
also covers those cases where the VI \eqref{eq:VI_again33} cannot be identified with a minimization problem of the form \eqref{eq:minimization_problem}
and where, as a consequence, the method of proof in  \cite{Do1992} cannot be applied.

\item Although the proof of \cref{th:elliptic} does not need the Hilbert space structure of $X$,
it is not useful to assume that $X$ is only a Banach space in the situation of \cref{th:elliptic} since
the existence of a Fréchet differentiable map $A$ with the property \eqref{eq:monotonicityassumption}  already implies that
$X$ is Hilbertizable, see \eqref{eq:derivative_ellipticity}.

\end{enumerate}
\end{remark}

\section{Three Applications}
\label{sec:examples}
In what follows, we demonstrate by means of three tangible examples that 
the results in \cref{sec:abstract_analysis,sec:tangible_corollary,sec:elliptic} 
are not only interesting from a theoretical point of view but also suitable for practical applications
which are not covered by the existing literature.

\subsection{Static Elastoplasticity in Dual Formulation}
\label{subsec:plasti}
In this section,
we demonstrate that \cref{corollary:tangible}
enables us to differentiate the solution map
of static elastoplasticity.
In \cite{HerzogMeyerWachsmuth2010:2},
it was shown that this map is weakly directionally differentiable.
This result was sharpened in \cite[Theorem~3.8]{BetzMeyer2015}
to Bouligand differentiability under more restrictive regularity assumptions.
Our technique enables us to prove Hadamard directional differentiability
under the natural regularity of the problem.

We will work in a slightly abstract setting.
However, we roughly keep the notation of
\cite{HerzogMeyerWachsmuth2010:2,BetzMeyer2015}
to make it easier to transfer our results to
the precise setting of static elastoplasticity.

\begin{assumption}[Setting of Elastoplasticity]
	\label{asm:plasti}
	We assume that $V$ is a Hilbert space, that $\mu$
	is a $\sigma$-finite measure on a set $\Omega$, and that $m, n \in \mathbb{N}$.
	We further suppose that a bounded, linear, symmetric and coercive
	map $A : S^2 \to (S^2)\dualspace$, a bounded linear 
	map $B : S^2 \to V\dualspace$ and a linear operator 
	$\DD : \R^m \to \R^n$
	are given, where $S^2 := L^2(\mu)^m$. 
	Via 
	$\DD$
	we define
	the set
	$K := \{ \Sigma \in S^2 : \abs{\DD\Sigma}_{\R^n} \le 1\text{ a.e.\ in }\Omega\}$.
	Finally, we assume that the restriction of $B$ to the Hilbert space $H:=\{\Sigma \in S^2 :  \abs{\DD\Sigma}_{\R^n} = 0 \text{ a.e.\ in }\Omega\}$
	is surjective.
\end{assumption}
Although we do not need the product-type structure of $S^2$,
we keep this notation for consistency with the above references.
For the same reason, we 
use the symbol $A$ to denote the first operator appearing in \cref{asm:plasti}.
(This operator will only be a part of the nonlinearity in \eqref{eq:VI} and should 
not be confused with it, cf.\ \eqref{eq:operatorAelastoplasti} below).

By $\TT_K(\Sigma), \NN_K(\Sigma) \subset S^2$
we denote the tangent cone and the normal cone to $K$ at $\Sigma \in K$
in the sense of convex analysis, respectively.

Within the above framework, we consider
for a given datum $\ell \in V\dualspace$ the minimization problem
\begin{equation}
	\label{eq:plasti_1}
	\text{Minimize}\quad
	\frac12 \, \dual{A \, \Sigma}{\Sigma}
	\quad\text{such that}\quad
	B\Sigma = \ell
	\quad\text{and}\quad
	\Sigma \in K.
\end{equation}
First, we provide a result concerning the solvability of \eqref{eq:plasti_1}.
\begin{lemma}
\label{lemma:plastiproperties}
Problem \eqref{eq:plasti_1} admits a unique solution $\Sigma_\ell$ for every $\ell \in V\dualspace$.
Moreover, for every $\ell \in V\dualspace$ with associated solution $\Sigma_\ell$, there exists a unique multiplier
$u_\ell \in V$ with
\begin{equation}
	\label{eq:plasti_VI}
	\dual{A \Sigma_\ell + B\dualspace u_\ell}{T - \Sigma_\ell}
	- \dual{B \Sigma_\ell - \ell}{v - u_\ell}
	\ge
	0
	\qquad
	\forall (T,v) \in K \times V,
\end{equation}
and the solution map $V\dualspace \ni \ell \mapsto (\Sigma_\ell, u_\ell) \in S^2 \times V$ is globally Lipschitz continuous. 
\end{lemma}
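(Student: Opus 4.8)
The plan is to treat \eqref{eq:plasti_1} as a convex quadratic program with one affine equality constraint $B\Sigma = \ell$ and one convex-set constraint $\Sigma\in K$, and to read off \eqref{eq:plasti_VI} as its Karush--Kuhn--Tucker system. First I would settle existence and uniqueness of $\Sigma_\ell$: the feasible set $\{\Sigma\in S^2 : B\Sigma = \ell\}\cap K$ is closed, convex and nonempty (pick $\Sigma_0\in H$ with $B\Sigma_0=\ell$, which is possible since $B|_H$ is surjective; then $\abs{\DD\Sigma_0}_{\R^n}=0\le 1$ a.e., so $\Sigma_0\in K$), and since $A$ is symmetric, bounded and coercive, the objective is continuous, strictly convex and coercive on $S^2$, so the direct method of the calculus of variations gives a unique minimizer $\Sigma_\ell$.

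For the multiplier I would exploit the structural fact that $H$ lies in the lineality space of $K$: if $\Sigma\in K$ and $h\in H$, then $\abs{\DD(\Sigma\pm h)}_{\R^n}=\abs{\DD\Sigma}_{\R^n}\le 1$ a.e., so $\Sigma\pm h\in K$. Hence $H\subset K-\Sigma_\ell$, and therefore $B(K-\Sigma_\ell)\supset B(H)=V\dualspace$, which is precisely Robinson's constraint qualification for \eqref{eq:plasti_1} at $\Sigma_\ell$. The standard Lagrange multiplier theorem for convex problems (see, e.g., \cite{BonnansShapiro2000}) then yields a multiplier $u_\ell\in(V\dualspace)\dualspace=V$ with $B\Sigma_\ell=\ell$ and $-A\Sigma_\ell-B\dualspace u_\ell\in\NN_K(\Sigma_\ell)$; rewriting the inclusion as a variational inequality over $K$ (i.e.\ choosing $v=u_\ell$ in \eqref{eq:plasti_VI}) and appending the equality constraint (i.e.\ choosing $T=\Sigma_\ell$ in \eqref{eq:plasti_VI}) is precisely \eqref{eq:plasti_VI}. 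Uniqueness of $u_\ell$ I would obtain from the same lineality observation: testing $g\in\NN_K(\Sigma_\ell)$ with $T=\Sigma_\ell\pm h$, $h\in H$, gives $\dual{g}{h}=0$, hence $\NN_K(\Sigma_\ell)\subset H\anni$; so two multipliers $u_\ell,u_\ell'$ satisfy $B\dualspace(u_\ell-u_\ell')\in H\anni$, and since $Bh$ runs through all of $V\dualspace$ as $h$ runs through $H$, this forces $u_\ell=u_\ell'$.

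The Lipschitz continuity I would prove by the usual monotonicity trick. For $\ell_1,\ell_2\in V\dualspace$ with associated solutions $(\Sigma_i,u_i)$, testing \eqref{eq:plasti_VI} for $\ell_1$ with $(T,v)=(\Sigma_2,u_1)$ and for $\ell_2$ with $(T,v)=(\Sigma_1,u_2)$ and adding the two inequalities, the coercivity of $A$ with constant $\alpha>0$ yields $\alpha\norm{\Sigma_1-\Sigma_2}_{S^2}^2\le\dual{u_1-u_2}{\ell_2-\ell_1}\le\norm{u_1-u_2}_V\norm{\ell_1-\ell_2}_{V\dualspace}$. To close this I still need a bound on $\norm{u_1-u_2}_V$ in terms of $\norm{\Sigma_1-\Sigma_2}_{S^2}$: from $-A\Sigma_i-B\dualspace u_i\in\NN_K(\Sigma_i)\subset H\anni$ one gets $\dual{B\dualspace(u_1-u_2)}{h}=-\dual{A(\Sigma_1-\Sigma_2)}{h}$ for all $h\in H$, hence $\abs{\dual{B\dualspace(u_1-u_2)}{h}}\le\norm{A}\norm{\Sigma_1-\Sigma_2}_{S^2}\norm{h}_{S^2}$; combined with the inf--sup inequality $\beta\norm{u}_V\le\sup_{0\ne h\in H}\abs{\dual{B\dualspace u}{h}}/\norm{h}_{S^2}$ (valid with some $\beta>0$ because $B|_H$ is surjective, by the open mapping theorem) this gives $\norm{u_1-u_2}_V\le\beta^{-1}\norm{A}\norm{\Sigma_1-\Sigma_2}_{S^2}$. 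Substituting and cancelling $\norm{\Sigma_1-\Sigma_2}_{S^2}$ yields Lipschitz continuity of $\ell\mapsto\Sigma_\ell$ with constant $\norm{A}/(\alpha\beta)$, and then of $\ell\mapsto u_\ell$ with constant $\norm{A}^2/(\alpha\beta^2)$.

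The hardest part, I expect, is not a single step but the bookkeeping in the last paragraph: the a priori estimates for $\Sigma$ and for $u$ are coupled, so one has to first control $\norm{u_1-u_2}$ by $\norm{\Sigma_1-\Sigma_2}$ through the inf--sup condition and only afterwards close the loop. The other delicate point is the verification of the constraint qualification together with the inf--sup bound --- this is exactly where the innocuous-looking assumption from \cref{asm:plasti} that $B$ restricted to $H$ is surjective does all the work, simultaneously ensuring nonemptiness of the feasible set, existence and uniqueness of the multiplier, and the stability estimate.
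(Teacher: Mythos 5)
Your proof is correct, and the overall architecture (direct method for existence/uniqueness of $\Sigma_\ell$, a constraint qualification plus Lagrange multipliers for $u_\ell$, and the monotonicity trick for Lipschitz stability) is the same as the paper's; in particular, your verification of Robinson's CQ via the lineality observation $\Sigma_\ell + H \subseteq K$ just spells out why the Zowe--Kurcyusz condition invoked in the paper holds. The one place where you genuinely diverge is the stability estimate. The paper tests the VI for $\ell_1$ with $T = \Sigma_{\ell_2} + \tilde B(\ell_1 - \ell_2)$, where $\tilde B$ is a bounded right inverse of $B|_H$ (admissible because shifting by elements of $H$ does not change $\abs{\DD(\cdot)}$); this makes the $B\dualspace u$-terms cancel exactly, so the estimate $\norm{\Sigma_{\ell_1}-\Sigma_{\ell_2}}_{S^2} \le C\norm{\ell_1-\ell_2}_{V\dualspace}$ comes out decoupled from any control of $u$, and the identity $\dual{u_{\ell_1}-u_{\ell_2}}{\hat\ell} = \dual{A\Sigma_{\ell_2}-A\Sigma_{\ell_1}}{\tilde B\hat\ell}$ then delivers uniqueness and Lipschitz continuity of $u_\ell$ in one stroke. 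You instead test with the plain point $\Sigma_{\ell_2}$, which leaves the coupled bound $\alpha\norm{\Sigma_1-\Sigma_2}^2 \le \norm{u_1-u_2}_V\norm{\ell_1-\ell_2}_{V\dualspace}$, and you close the loop with the inf--sup inequality for $B\dualspace$ on $H$ (equivalent, via the open mapping/closed range theorem, to the existence of the paper's $\tilde B$) together with $\NN_K(\Sigma_\ell)\subset H\anni$. Both routes rest on exactly the same structural facts; the paper's choice of test function buys a slightly cleaner bookkeeping, while yours makes the role of the inf--sup condition for the saddle-point structure more explicit.
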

\begin{proof}
By our assumptions, the feasible set of \eqref{eq:plasti_1} is closed, convex and nonempty. 
Together with the continuity, radial unboundedness and strict convexity of the objective,
this yields the existence of a unique solution $\Sigma_\ell$ for all $\ell \in V \dualspace$. 
Further, the CQ
of Zowe and Kurcyusz
is satisfied by \eqref{eq:plasti_1}.
Thus, there exist $u_\ell \in V$
and $\Xi_\ell \in \NN_K(\Sigma_\ell)$
with
\begin{equation*}
	A \Sigma_\ell + B\adjoint u_\ell + \Xi_\ell = 0.
\end{equation*}
The above and the equality $B\Sigma_\ell = \ell$ immediately give \eqref{eq:plasti_VI}. 
Next, we prove the Lipschitz continuity of the solution map.
By using that $B : H \to V\dualspace$ is surjective and that $H$ is a Hilbert space,
we find that $B$ admits a bounded linear right inverse $\tilde B: V\dualspace \to H$, i.e., $B \tilde B = \mathrm{Id}_{V\dualspace}$.
Consider now two right-hand sides $\ell_1, \ell_2 \in V\dualspace$ with associated solutions $\Sigma_{\ell_1}, \Sigma_{\ell_2}$
and multipliers $u_{\ell_1}, u_{\ell_2}$. Then, we may choose the tuple $(T, v) := (\Sigma_{\ell_2} + \tilde B(\ell_1 - \ell_2), u_{\ell_1})$
in the VI for $\ell_1$ and the tuple $(T, v) := (\Sigma_{\ell_1} + \tilde B(\ell_2 - \ell_1), u_{\ell_2})$ in the VI for $\ell_2$ to obtain
\begin{gather*}
	\dual{A \Sigma_{\ell_1}}{\Sigma_{\ell_2} - \Sigma_{\ell_1} + \tilde B(\ell_1 - \ell_2) }
	\ge
	0
	\quad
	\text{and}
	\quad
	\dual{A \Sigma_{\ell_2}}{\Sigma_{\ell_1} - \Sigma_{\ell_2} + \tilde B(\ell_2 - \ell_1) }
	\ge
	0.
\end{gather*}
If we add the above, it follows immediately that the solution map to \eqref{eq:plasti_1} is  Lipschitz in the 
$\Sigma$-component. If, on the other hand, we choose tuples of the form $(\Sigma_{\ell_1} + \tilde B \hat\ell , u_{\ell_1})$
and $(\Sigma_{\ell_2} + \tilde B \hat\ell, u_{\ell_2})$, $\hat\ell \in V\dualspace$, in the VIs for 
$\ell_1$ and $\ell_2$, respectively, then we obtain 
\begin{gather*}
	\dual{A \Sigma_{\ell_1} + B\dualspace u_{\ell_1}}{\tilde B \hat\ell }
	\ge
	0
	\quad
	\text{and}
	\quad
	\dual{A \Sigma_{\ell_2} + B\dualspace u_{\ell_2}}{\tilde B \hat\ell }
	\ge
	0
	\qquad\forall \hat\ell \in V\dualspace
\end{gather*}
and, consequently,
\begin{equation}
\label{eq:diffbootstrapping}
\dual{u_{\ell_1} - u_{\ell_2}}{\hat\ell} = \dual{A \Sigma_{\ell_2}  - A \Sigma_{\ell_1}}{\tilde B \hat\ell }\qquad \forall \hat\ell \in V\dualspace.
\end{equation}
The above yields that the multiplier $u_\ell$ is unique for each $\ell$ and that the map $\ell \mapsto u_\ell$ is globally Lipschitz continuous, too. 
\end{proof}

We emphasize that the linear operator which defines the VI \eqref{eq:plasti_VI}
has saddle-point structure.
Thus, it is not coercive and the VI cannot be identified with a projection problem.
In particular, we cannot apply the classical results of, e.g.,  \cite{Do1992}
to obtain the directional differentiability of the solution operator.

As a preparation for our differentiability result,
we give an expression for the second subderivative
of the indicator function of $K$.
\begin{lemma}
	\label{lem:subderivative_indicator}
	Let $\Sigma \in K$ and $\Xi \in \NN_K(\Sigma)$ be given.
	Then, there exists a $\lambda \in L^2(\mu)$
	with $\lambda \ge 0$
	such that $\Xi = \lambda \, \DD\adjoint \DD \Sigma$.
	Moreover,
	the indicator function $\delta_K$
	is strongly twice epi-differentiable in $\Sigma$ for $\Xi$
	with
	\begin{equation*}
		Q_{\delta_K}^{\Sigma,\Xi}
		(T)
		=
		\int_\Omega
		\lambda \, \abs{\DD T}_{\R^n}^2
		\,
		\d\mu
		\in [0,\infty]
	\end{equation*}
	for all $T \in \TT_K(\Sigma) \cap \Xi\anni$
	and $Q_{\delta_K}^{\Sigma,\Xi}(T) = +\infty$ otherwise.
	In particular,
	\begin{equation*}
		\KK_{\delta_K}^{\Sigma, \Xi}
		=
		\Bigh\{\}{
			T \in \TT_K(\Sigma) \cap \Xi\anni
			\mid
			\int_\Omega
			\lambda \, \abs{\DD T}_{\R^n}^2
			\,
			\d\mu
			<
			\infty
		}.
	\end{equation*}
\end{lemma}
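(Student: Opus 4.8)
The plan is to argue almost entirely pointwise, exploiting that $K = \{\Sigma \in S^2 : \abs{\DD\Sigma(\omega)}_{\R^n}\le1 \text{ for }\mu\text{-a.e.\ }\omega\}$, and to reduce the statement about $Q_{\delta_K}^{\Sigma,\Xi}$ to \cref{lem:obtain_twice_epi}. Throughout I identify $\NN_K(\Sigma)$ with the subdifferential $\partial \delta_K(\Sigma)$ from \cref{lem:test_VI}, and note that in the Hilbert space $S^2$ the weak and weak-$\star$ topologies coincide.

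\emph{Representation of $\Xi$.} The convex normal cone of the Euclidean ball $\{v\in\R^n:\abs{v}\le1\}$ at a point $v$ is $\{0\}$ if $\abs{v}<1$ and $\R^+ v$ if $\abs{v}=1$, so the normal cone of $\{y\in\R^m:\abs{\DD y}_{\R^n}\le1\}$ at $\Sigma(\omega)$ is $\{0\}$ on $\{\abs{\DD\Sigma}<1\}$ and $\R^+\DD\adjoint\DD\Sigma(\omega)$ on $\{\abs{\DD\Sigma}=1\}$. A standard measurable-selection argument (test $\Xi\in\NN_K(\Sigma)$ against elements of $K$ which perturb $\Sigma$ pointwise on a measurable set and agree with $\Sigma$ off it) transfers this decoupling to $\NN_K(\Sigma)$ and yields a measurable $\lambda\ge0$, vanishing a.e.\ on $\{\abs{\DD\Sigma}<1\}$, with $\Xi=\lambda\,\DD\adjoint\DD\Sigma$. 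That $\lambda\in L^2(\mu)$ follows because $\DD\adjoint$ is injective on the finite-dimensional subspace $\DD(\R^m)\subset\R^n$ (if $\DD\adjoint\DD w=0$ then $\abs{\DD w}^2=\dual{w}{\DD\adjoint\DD w}=0$), hence bounded below by some $\gamma>0$ there, so $\abs{\DD\adjoint\DD\Sigma}\ge\gamma$ on $\{\abs{\DD\Sigma}=1\}$ and $\lambda\le\abs{\Xi}/\gamma\in L^2(\mu)$.

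\emph{Domain, complementarity, lower bound.} The same pointwise reasoning (now with the tangent cone of $\{y:\abs{\DD y}\le1\}$, which is $\R^m$ where $\abs{\DD\Sigma}<1$ and $\{d:\DD\Sigma\cdot\DD d\le0\}$ where $\abs{\DD\Sigma}=1$) gives $\TT_K(\Sigma)=\{T\in S^2:\DD\Sigma\cdot\DD T\le0\text{ a.e.\ on }\{\abs{\DD\Sigma}=1\}\}$, where $\cdot$ is the pointwise $\R^n$-inner product. If $z_n\weakly T$ and $\Sigma+t_nz_n\in K$, then $z_n\in t_n^{-1}(K-\Sigma)\subset\TT_K(\Sigma)$, so $T\in\TT_K(\Sigma)$ (which is convex and closed, hence weakly closed); moreover $\dual{\Xi}{z_n}\le0$ by $\Xi\in\NN_K(\Sigma)$, and if $\dual{\Xi}{T}<0$ the difference quotient $-2\,t_n^{-1}\dual{\Xi}{z_n}\to+\infty$. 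Hence $\KK_{\delta_K}^{\Sigma,\Xi}\subset\TT_K(\Sigma)\cap\Xi\anni$, and on this set the indicator term in the second-order difference quotient vanishes along admissible sequences. For $T\in\TT_K(\Sigma)\cap\Xi\anni$ the identity $0=\dual{\Xi}{T}=\int_\Omega\lambda\,\DD\Sigma\cdot\DD T\,\d\mu$ together with $\lambda\ge0$ and $\DD\Sigma\cdot\DD T\le0$ on $\{\abs{\DD\Sigma}=1\}\supset\{\lambda>0\}$ forces $\DD\Sigma\cdot\DD T=0$ a.e.\ on $\{\lambda>0\}$. Finally, for admissible $z_n\weakly T$, pointwise feasibility on $\{\lambda>0\}\subset\{\abs{\DD\Sigma}=1\}$ reads $2t_n\DD\Sigma\cdot\DD z_n+t_n^2\abs{\DD z_n}^2\le0$, so $-2\,t_n^{-1}\dual{\Xi}{z_n}=\int_{\{\lambda>0\}}\lambda\,(-2\,t_n^{-1}\DD\Sigma\cdot\DD z_n)\,\d\mu\ge\int_\Omega\lambda\,\abs{\DD z_n}^2\,\d\mu$; since $\DD z_n\weakly\DD T$ in $L^2(\mu)^n$ and $h\mapsto\int_\Omega\lambda\abs{h}^2\,\d\mu$ is convex and strongly sequentially lower semicontinuous on $L^2(\mu)^n$ (Fatou along an a.e.-convergent subsequence), it is weakly sequentially lower semicontinuous, and passing to the limit gives $Q_{\delta_K}^{\Sigma,\Xi}(T)\ge\int_\Omega\lambda\abs{\DD T}^2\,\d\mu$.

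\emph{Recovery sequences via \cref{lem:obtain_twice_epi}.} Put $Z:=\{T\in\TT_K(\Sigma)\cap\Xi\anni:\DD T\in L^\infty(\mu)\}$ and $Q(T):=\int_\Omega\lambda\abs{\DD T}^2\,\d\mu$; the previous step is hypothesis (i) of \cref{lem:obtain_twice_epi}. For hypothesis (ii), given $T\in Z$ with $\abs{\DD T}\le M$ and $t_n\searrow0$, define $z_n$ by pointwise Euclidean projection of $T(\omega)$ onto $\{y:\abs{\DD\Sigma(\omega)+t_n\DD y}_{\R^n}\le1\}$ on $\{\abs{\DD\Sigma}>1-\varepsilon_n\}$ with $\varepsilon_n\searrow0$ chosen so that $\varepsilon_n/t_n\to\infty$ (e.g.\ $\varepsilon_n=\sqrt{t_n}$), and $z_n:=T$ elsewhere. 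Then $\Sigma+t_nz_n\in K$, so the indicator term is zero; $z_n\to T$ strongly in $S^2$ since on $\{\abs{\DD\Sigma}=1\}$ the direction $T$ is at most $O(t_n^2)$-infeasible (as $\DD\Sigma\cdot\DD T\le0$ and $\DD T$ is bounded) so the projection displacement is $O(t_n)$ there, while on the transition set $\{1-\varepsilon_n<\abs{\DD\Sigma}<1\}$ the displacement is bounded and, as $\DD\Sigma\in L^2(\mu)^n$, that set has vanishing measure; and on $\{\lambda>0\}$ the difference quotient $-2\,t_n^{-1}\DD\Sigma\cdot\DD z_n$ converges pointwise to $\abs{\DD T}^2$ (it equals $\abs{\DD z_n}^2$ where the projected point hits the boundary, i.e.\ where $\DD T\ne0$, and both vanish where $\DD T=0$), so — $\DD z_n$ being uniformly bounded and $\{\lambda>0\}$ having finite measure — dominated convergence gives $-2\,t_n^{-1}\dual{\Xi}{z_n}\to Q(T)$. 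Hence $Z\subset\KK_{\delta_K}^{\Sigma,\Xi}$ and the $z_n$ are strong recovery sequences. For hypothesis (iii), given $T\in\KK_{\delta_K}^{\Sigma,\Xi}\subset\TT_K(\Sigma)\cap\Xi\anni$ set $T_k:=T\,\I_{\{\abs{T}\le k\}}$; the pointwise cone description gives $T_k\in\TT_K(\Sigma)$, the complementarity gives $\dual{\Xi}{T_k}=0$, one has $T_k\to T$ in $S^2$ with $\DD T_k\in L^\infty(\mu)$, and $Q(T_k)\nearrow Q(T)$ by monotone convergence. \cref{lem:obtain_twice_epi} (strong version) then yields strong twice epi-differentiability of $\delta_K$ in $\Sigma$ for $\Xi$ and $Q_{\delta_K}^{\Sigma,\Xi}=Q$ on $\KK_{\delta_K}^{\Sigma,\Xi}$. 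Combining $\KK_{\delta_K}^{\Sigma,\Xi}\subset\TT_K(\Sigma)\cap\Xi\anni$, $Z\subset\KK_{\delta_K}^{\Sigma,\Xi}$, the weak lower semicontinuity of $Q_{\delta_K}^{\Sigma,\Xi}$ from \cref{lem:more_properties_of_Q} \ref{item:more_properties_of_Q_2}, and the truncation argument, one gets $\KK_{\delta_K}^{\Sigma,\Xi}=\{T\in\TT_K(\Sigma)\cap\Xi\anni:\int_\Omega\lambda\abs{\DD T}^2\,\d\mu<\infty\}$, which finishes the proof.

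\emph{Main obstacle.} The delicate point is hypothesis (ii): constructing $z_n$ that is simultaneously pointwise feasible, strongly convergent to $T$ in $S^2$, and such that $-2\,t_n^{-1}\dual{\Xi}{z_n}$ converges to exactly $\int_\Omega\lambda\abs{\DD T}^2\,\d\mu$ requires careful bookkeeping near the boundary $\{\abs{\DD\Sigma}\approx1\}$, which is precisely why one first treats $T$ with $\DD T\in L^\infty(\mu)$ (there the projection displacement is uniformly $O(t_n)$ and $\DD z_n$ stays bounded) and then passes to the general case by truncation. A secondary technicality is the rigorous justification of the pointwise descriptions of $\NN_K(\Sigma)$ and $\TT_K(\Sigma)$ via measurable selection.
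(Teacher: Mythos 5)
Your proposal is correct, but it takes a genuinely different and much more self-contained route than the paper. The paper proves the representation $\Xi = \lambda\,\DD\adjoint\DD\Sigma$ directly (essentially as you do, via the pointwise normal cone and the lower bound $\abs{\DD\adjoint\DD z}_{\R^m}\ge c\,\abs{\DD z}_{\R^n}$), but then delegates both the formula for $Q_{\delta_K}^{\Sigma,\Xi}$ and the strong second-order epi-differentiability entirely to the literature, citing \cite[Section~5, Theorem~5.5]{Do1992} and \cite[Exercise~13.17]{RockafellarWets1998}. You instead verify the hypotheses of \cref{lem:obtain_twice_epi} from scratch: the lower bound via the pointwise feasibility inequality combined with weak sequential lower semicontinuity of $h \mapsto \int_\Omega \lambda\,\abs{h}^2\,\d\mu$, explicit recovery sequences by pointwise projection for directions with $\DD T \in L^\infty(\mu)$, and truncation for general directions. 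This buys a proof in the elementary, self-contained spirit the paper advertises, at the price of bookkeeping that the citation hides, and all the key steps check out. Two terse points deserve one more line each. First, the uniform $O(t_n)$ bound on the projection displacement on $\{\abs{\DD\Sigma}=1\}$ — which you need so that $\DD z_n$ stays uniformly bounded and $\lambda\,\abs{\DD z_n}^2$ is dominated by $\lambda\,(M+1)^2 \in L^1(\{\lambda > 0\})$ (the set $\{\lambda>0\}\subset\{\abs{\DD\Sigma}=1\}$ has finite measure since $\DD\Sigma \in L^2(\mu)^n$) — is not obtained by retracting towards the origin, which only yields a displacement of order $t_n(\abs{\Sigma(\omega)}+1)$ and hence a non-uniform pointwise bound. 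Instead take $y^\ast := T(\omega) - t_n\,\abs{\DD T(\omega)}^2\,\DD^{+}\DD\Sigma(\omega)$, where $\DD^{+}$ is a bounded right inverse of $\DD$ on its range; using $\abs{\DD\Sigma}=1$, $\DD\Sigma\cdot\DD T\le 0$ and $t_n\,\abs{\DD T}\le 1$ one checks $\abs{\DD\Sigma + t_n\,\DD y^\ast}\le 1$, so the displacement is at most $C M^2 t_n$ uniformly. Second, the nontrivial inclusion $\{T : \DD\Sigma\cdot\DD T\le 0 \text{ a.e.\ on } \{\abs{\DD\Sigma}=1\}\}\subseteq \TT_K(\Sigma)$ is not needed a priori: it follows a posteriori from your own recovery-sequence construction together with the truncation step, which exhibit every such $T$ as a strong limit of elements of $\R^+(K-\Sigma)$; it is worth stating explicitly that the steps are taken in this order. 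With these two points spelled out, the argument is complete and yields the same conclusion as the cited results.
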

\begin{proof}
	It is easy to check that
	$\Xi(x)$ is a nonnegative multiple of $\DD\adjoint\DD\Sigma(x)$
	for a.e.\ $x \in \Omega$ with $\abs{\DD\Sigma(x)}_{\R^n} = 1$,
	and zero otherwise.
	Moreover, it is easy to see that there exists a constant $c > 0$ with 
	$|\DD\dualspace \DD z|_{\R^m} \geq c | \DD z|_{\R^n}$
	for all $z \in \R^m$. 
	Combining these facts, we obtain that the function 
	\begin{equation*}
	\lambda(x)
	:=
	\begin{cases}
	\scalarprod{\Xi(x)}{\DD\adjoint\DD\Sigma(x)}_{\R^m} / \abs{\DD\adjoint\DD\Sigma(x)}_{\R^m}^2 & \text{ if }  \abs{\DD\Sigma(x)}_{\R^m} = 1
	\\
	0 & \text{ else }
	\end{cases}
	\end{equation*}
	satisfies $0 \leq \lambda \in L^2(\mu)$ and $\Xi = \lambda \, \DD\adjoint \DD \Sigma$ as claimed.
	The formula
	for the second subderivative
	follows from
	\cite[Section~5]{Do1992} and \cite[Exercise~13.17]{RockafellarWets1998}.
	In particular, \cite[Theorem~5.5]{Do1992}
	implies that $\delta_K$
	is strongly twice epi-differentiable.
\end{proof}

We are now in the position to prove the main result of this section.
\begin{theorem}
	\label{thm:plasti_diff}
	Let $\ell_0 \in V\dualspace$ be given.
	Denote by $\lambda_0 \in L^2(\mu)$ the function from \cref{lem:subderivative_indicator}
	such that
	\begin{equation*}
		A \Sigma_{\ell_0} + B\adjoint u_{\ell_0} + \lambda_0 \, \DD\adjoint \DD\Sigma_{\ell_0} = 0.
	\end{equation*}
	Then,
	the mapping $V\dualspace \ni \ell \mapsto (\Sigma_\ell, u_\ell) \in S^2 \times V$
	is strongly Hadamard directionally differentiable in $\ell_0$.
	Moreover, 
	the directional derivative $(\Sigma', u') \in \KK_{\delta_K}^{\Sigma_{\ell_0},\Xi_{\ell_0}} \times V$ in direction $\delta\ell \in V\dualspace$
	is given by the unique solution of
	\begin{equation}
		\label{eq:plasti_deriv_VI_2}
		\begin{aligned}
			\dual{A \Sigma' + B\adjoint u'}{T - \Sigma'}
			+
			\int_\Omega
			\lambda_0 \, \bigscalarprod{\DD\Sigma'}{\DD(T - \Sigma')}_{\R^n}
			\,
			\d\mu
			- \dual{B\Sigma' - \delta\ell}{v - u'}
			\ge
			0
			\quad
			&
			\\
			\forall (T,v) \in \KK_{\delta_K}^{\Sigma_{\ell_0},\Xi_{\ell_0}}\times V.
			&
		\end{aligned}
	\end{equation}
\end{theorem}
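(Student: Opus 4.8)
The plan is to bring the variational inequality \eqref{eq:plasti_VI} into the abstract form \eqref{eq:VI}, apply part \ref{item:epi_scenario2} of \cref{corollary:tangible}, and then upgrade the weak convergence that the corollary supplies to strong convergence by exploiting the saddle-point structure. I would set $X := S^2 \times V$ (a Hilbert space, so the setting of \cref{asm:data} holds with $Y := X\dualspace$), $P := V\dualspace$, and define
\begin{equation*}
	\AA\bigl(\ell,(\Sigma,u)\bigr) := \bigl(A\Sigma + B\adjoint u,\ \ell - B\Sigma\bigr) \in (S^2)\dualspace \times V\dualspace = X\dualspace
	\qquad\text{and}\qquad
	j(\Sigma, v) := \delta_K(\Sigma),
\end{equation*}
so that \eqref{eq:plasti_VI} is precisely \eqref{eq:VI} with operator $\AA$, functional $j$, and parameter $\ell$ (the indicator $j$ encodes the restriction $(T,v)\in K\times V$). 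The map $\AA$ is affine, hence Fréchet differentiable, with $\AA_\ell\,\delta\ell = (0,\delta\ell)$ and $\AA_x(\delta\Sigma,\delta u) = (A\delta\Sigma + B\adjoint\delta u,\ -B\delta\Sigma)$, and by \cref{lemma:plastiproperties} the solution map $S = (\Sigma_\cdot, u_\cdot)$ is single-valued and globally Lipschitz, so the local uniqueness and upper-Lipschitz hypotheses of \cref{corollary:tangible} hold at $\ell_0$ for any $R > 0$. Writing $\bar x_0 := (\Sigma_{\ell_0}, u_{\ell_0})$, the VI \eqref{eq:plasti_VI} together with $B\Sigma_{\ell_0} = \ell_0$ identifies $a_0 := -\AA(\ell_0,\bar x_0)$ with $(\Xi_{\ell_0}, 0)$, where $\Xi_{\ell_0} = \lambda_0\,\DD\adjoint\DD\Sigma_{\ell_0} \in \NN_K(\Sigma_{\ell_0})$, so $a_0 \in \partial j(\bar x_0)$. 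Since $j$ depends only on the $\Sigma$-component, \cref{def:weak_star_subderivative} yields $Q\jxa(\delta\Sigma,\delta u) = Q_{\delta_K}^{\Sigma_{\ell_0},\Xi_{\ell_0}}(\delta\Sigma)$, and appending a constant $u$-component to the recovery sequences furnished by \cref{lem:subderivative_indicator} shows that $j$ is strongly twice epi-differentiable in $\bar x_0$ for $a_0$ with $\KK\jxa = \KK_{\delta_K}^{\Sigma_{\ell_0},\Xi_{\ell_0}} \times V$.

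Next I would check the hypothesis on $\AA_x$ and identify the linearized VI. A short computation shows that the saddle-point cross terms cancel, so $\dual{\AA_x(\delta\Sigma,\delta u)}{(\delta\Sigma,\delta u)} = \dual{A\delta\Sigma}{\delta\Sigma}$; $\AA_x$ is bounded linear, hence maps weakly convergent sequences to weakly convergent ones, and since $A$ is symmetric and coercive we get $\liminf_n \dual{\AA_x y_n}{y_n} = \liminf_n \dual{A\delta\Sigma_n}{\delta\Sigma_n} \ge \dual{A\delta\Sigma}{\delta\Sigma} = \dual{\AA_x y}{y}$ for $y_n = (\delta\Sigma_n,\delta u_n) \weakly y = (\delta\Sigma,\delta u)$. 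Unpacking \eqref{eq:VI_derivative_again} with $q = \delta\ell$, $z = (T,v)$, $y = (\Sigma',u')$ and writing the second-subderivative term through the quadratic form $Q_{\delta_K}^{\Sigma_{\ell_0},\Xi_{\ell_0}}(T) = \int_\Omega \lambda_0\,\abs{\DD T}_{\R^n}^2\,\d\mu$ on the convex cone $\KK_{\delta_K}^{\Sigma_{\ell_0},\Xi_{\ell_0}} = \TT_K(\Sigma_{\ell_0}) \cap \Xi_{\ell_0}\anni$, a standard scaling/convexity argument (using $\frac12\abs{a}^2 - \frac12\abs{b}^2 = \scalarprod{b}{a-b} + \frac12\abs{a-b}^2$ and dominated convergence) shows that \eqref{eq:VI_derivative_again} and \eqref{eq:plasti_deriv_VI_2} are equivalent. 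For the unique solvability of \eqref{eq:plasti_deriv_VI_2} needed to invoke the corollary, I would subtract the inequalities for two solutions $(\Sigma_i',u_i')$ to get $\dual{A(\Sigma_1'-\Sigma_2')}{\Sigma_1'-\Sigma_2'} \le 0$, hence $\Sigma_1' = \Sigma_2'$ by coercivity, and then test \eqref{eq:plasti_deriv_VI_2} with $(\Sigma_i' \pm \tilde B\hat\ell,\ u_i')$ — admissible since $\tilde B\hat\ell \in H$, so $\DD(\tilde B\hat\ell) = 0$ and $\KK_{\delta_K}^{\Sigma_{\ell_0},\Xi_{\ell_0}}$ is invariant under adding it, $\tilde B$ being the bounded right inverse of $B|_H$ from \cref{lemma:plastiproperties} — to obtain $\dual{u_i'}{\hat\ell} = -\dual{A\Sigma_i'}{\tilde B\hat\ell}$ for all $\hat\ell \in V\dualspace$, whence $u_1' = u_2'$. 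At this point \cref{corollary:tangible} \ref{item:epi_scenario2} applies: it yields the unique solvability of \eqref{eq:plasti_deriv_VI_2} and, writing $\bar x_t = (\Sigma_t, u_t)$ for the solution at $\ell_0 + t q_t$ with $q_t \to \delta\ell$, the convergences $(\bar x_t - \bar x_0)/t \weakly y = (\Sigma',u')$ and $\dual{\AA_x\,(\bar x_t - \bar x_0)/t}{(\bar x_t - \bar x_0)/t} \to \dual{\AA_x y}{y}$, where $y$ solves \eqref{eq:plasti_deriv_VI_2}; note that $y \in \KK\jxa$, i.e.\ $(\Sigma',u') \in \KK_{\delta_K}^{\Sigma_{\ell_0},\Xi_{\ell_0}} \times V$.

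The main obstacle is the upgrade to \emph{strong} convergence, which is not immediate because $z \mapsto \dual{\AA_x z}{z}$ degenerates in the $u$-direction and is therefore not a Legendre form on $X$, so the corresponding clause of \cref{corollary:tangible} is unavailable; the strong convergence must be recovered componentwise. For the stress component, the energy convergence above reads $\dual{A\,(\Sigma_t - \Sigma_{\ell_0})/t}{(\Sigma_t - \Sigma_{\ell_0})/t} \to \dual{A\Sigma'}{\Sigma'}$, so, $A$ being symmetric and coercive, $\Sigma \mapsto \dual{A\Sigma}{\Sigma}^{1/2}$ is an equivalent Hilbert norm on $S^2$ and weak convergence together with convergence of the $A$-norms forces $(\Sigma_t - \Sigma_{\ell_0})/t \to \Sigma'$ strongly in $S^2$. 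For the multiplier component I would reuse the bootstrapping identity \eqref{eq:diffbootstrapping}, which gives $u_{\ell_0 + t q_t} - u_{\ell_0} = -\tilde B\adjoint A\,(\Sigma_{\ell_0 + t q_t} - \Sigma_{\ell_0})$ and hence $(u_t - u_{\ell_0})/t = -\tilde B\adjoint A\,(\Sigma_t - \Sigma_{\ell_0})/t$, while testing \eqref{eq:plasti_deriv_VI_2} as above yields $u' = -\tilde B\adjoint A\,\Sigma'$; since $\tilde B\adjoint A$ is a bounded operator $S^2 \to V$, the strong convergence of the stress difference quotients transfers to the multiplier difference quotients. Combining the two pieces gives $(\bar x_t - \bar x_0)/t \to y = (\Sigma',u')$ strongly in $S^2 \times V$, i.e.\ the strong Hadamard directional differentiability of $\ell \mapsto (\Sigma_\ell, u_\ell)$ in $\ell_0$, the directional derivative in a direction $\delta\ell$ being the unique solution $(\Sigma',u') \in \KK_{\delta_K}^{\Sigma_{\ell_0},\Xi_{\ell_0}} \times V$ of \eqref{eq:plasti_deriv_VI_2}, as asserted.
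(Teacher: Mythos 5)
Your proposal is correct and follows essentially the same route as the paper: recast \eqref{eq:plasti_VI} in the abstract framework with $j = \delta_{K\times V}$, verify hypothesis \ref{item:tangible_2} of \cref{corollary:tangible} via \cref{lem:subderivative_indicator} and the cancellation $\dual{\AA_x(T,v)}{(T,v)} = \dual{AT}{T}$, reuse the argument of \cref{lemma:plastiproperties} for uniqueness of the linearized VI, and then upgrade to strong convergence componentwise — the $\Sigma$-component from the energy convergence and the $u$-component from \eqref{eq:diffbootstrapping}. The only difference is that you spell out several steps the paper leaves implicit (the product-space epi-differentiability, the admissibility of the test functions $\Sigma_i'\pm\tilde B\hat\ell$, and the equivalence of \eqref{eq:plasti_deriv_VI} and \eqref{eq:plasti_deriv_VI_2}), all of which check out.
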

\begin{proof}
	In what follows, our aim is to apply 
	\cref{corollary:tangible}
	under its assumption \ref{item:tangible_2}.
	To this end, we note that, if we set $X := S^2 \times V$, $Y := S^2 \times V\dualspace$,
	$P := V\dualspace$ and
	\begin{equation}
	\label{eq:operatorAelastoplasti}
		\AA : P \times X \to Y,\qquad 
		\AA( \ell, (\Sigma, u) )
		:=
		(A \Sigma + B\adjoint u, -B \Sigma + \ell ),
	\end{equation}
	and if we define $j$ to be the indicator function of $K \times V$,
	then  \eqref{eq:plasti_VI} takes exactly the form \eqref{eq:VI}. 
	In this setting, the linearized VI \eqref{eq:VI_derivative_again} becomes
	\begin{equation}
		\label{eq:plasti_deriv_VI}
		\begin{aligned}
		\dual{A \Sigma' + B\adjoint u'}{T - \Sigma'}
		+
		\int_\Omega
		\frac{\lambda_0}{2} \, \bigh[]{ \abs{\DD T}_{\R^n}^2 - \abs{\DD \Sigma'}_{\R^n}^2 }
		\,
		\d\mu
		- \dual{B  \Sigma' - \delta\ell}{v - u'}
		\ge
		0
		\quad
		&\\
		\forall (T,v) \in \KK_{\delta_K}^{\Sigma_{\ell_0},\Xi_{\ell_0}} \times V.
		&
		\end{aligned}
	\end{equation}
	Using exactly the same arguments as in the proof of \cref{lemma:plastiproperties}, we obtain that 
	\eqref{eq:plasti_deriv_VI} can have at most one solution.
	Finally, \ref{item:tangible_2} in \cref{corollary:tangible} follows from
	\cref{lem:subderivative_indicator}
	and the fact that
	$(T,v) \mapsto \dual{\AA_x(T,v)}{(T,v)} = \dual{AT}{T}$
	is convex and continuous, thus weakly lower semicontinuous.
	\cref{corollary:tangible} now
	yields that the solution map is weakly 
	Hadamard directionally differentiable and that 
	the directional derivatives are uniquely characterized by \eqref{eq:plasti_deriv_VI}.
	Moreover, \eqref{eq:randomconvergence42}
	together with
	$\dual{\AA_x(T,v)}{(T,v)} = \dual{AT}{T}$
	implies that the difference quotients associated with the $S^2$-component
	converge strongly.
	The strong convergence of the difference quotients associated with the $V$-component now
	follows from \eqref{eq:diffbootstrapping}.
	Finally, the equivalence of \eqref{eq:plasti_deriv_VI} and \eqref{eq:plasti_deriv_VI_2}
	is easy to check.
\end{proof}

\subsection{Projection onto a Prox-Regular Set}
\label{subsec:prox_regular}
Next, we show that \cref{th:elliptic}
can also be used to study  the differentiability properties
of the projection onto a prox-regular set.
The notion of prox-regular sets generalizes the concept of convexity and has been introduced
many times with different names. Some unification was
performed in \cite{PoliquinRockafellarThibault2000,ColomboThibault2010}.
Throughout this section, $K$ denotes a  closed nonempty subset of a real Hilbert space $H$.

\pagebreak[3]

\begin{definition}[Prox-Regularity]
	\label{def:prox_reg}
~	\begin{enumerate}
		\item
			The set-valued projection of $y \in H$ onto $K$ is given by
			\begin{equation*}
				\pi_K(y)
				:=
				\Bigh\{\}{
					x \in K
					\mid
					\norm{x - y}_H = \inf_{x' \in K} \norm{x' - y}_H
				}.
			\end{equation*}
		\item
			For $x \in K$ the proximal normal cone is given by
			\begin{equation*}
				\NN_K^P(x)
				:=
				\bigh\{\}{
					v \in H
					\mid
					\exists \lambda \ge 0, y \in H:
					x \in \pi_K(y)
					\mspace{9mu}\text{and}\mspace{9mu}
					v = \lambda \, (y - x)
				}.
			\end{equation*}
		\item
			For $r > 0$, the set $K$ is called
			$r$-prox-regular, if
			\begin{equation}
				\label{eq:r_prox_reg}
				\frac12 \, \norm{v}_H \, \norm{x - \bar x}_H^2
				\ge
				r \,
				\scalarprod{v}{x - \bar x}_H
				\qquad
				\forall \bar x,x \in K, v \in \NN_K^P(\bar x).
			\end{equation}
	\end{enumerate}
\end{definition}
Note that the set $K$ is convex if and only if $K$ is $r$-prox-regular for all $r > 0$.
We give a geometric interpretation of \eqref{eq:r_prox_reg}.
For a given $\bar x \in K$ and a $v \in \NN_K^P(\bar x)$ with $\norm{v}_H < r$, this condition implies
\begin{equation}
	\label{eq:inequality_prox_regular}
	\begin{aligned}
		\bignorm{ x - (\bar x + v)}_H^2
		&=
		\Bigh(){\frac{\norm{v}_H} r + 1 - \frac{\norm{v}_H} r}\,\norm{ x - \bar x}_H^2 + \norm{v}_H^2 - 2 \, \scalarprod{v}{x - \bar x}_H
		\\
		&\ge
		\Bigh(){1 - \frac{\norm{v}_H} r}\,\norm{ x - \bar x}_H^2 + \norm{v}_H^2
		\ge
		\norm{v}_H^2\qquad \forall x \in K,
	\end{aligned}
\end{equation}
where the last inequality is strict for $x \neq \bar x$.
Hence, the intersection of the closed ball $B_{\norm{v}_H}^H(\bar x + v)$ with $K$
is precisely the point $\bar x$.
In particular, $\pi_K(\bar x + v) = \{\bar x\}$.

The above statement can be strengthened as follows,
see \cite[Theorem~4.1, Lemma~4.2]{PoliquinRockafellarThibault2000},
\cite[Theorem~0.16]{ColomboThibault2010}.
\begin{theorem}
	\label{thm:lipschitz_projection}
	Assume that the closed set $K \subset H$ is $r$-prox-regular for some $r > 0$.
	We fix $\rho \in (0,r)$ and consider the $\rho$-enlargement
	\begin{equation*}
		K_\rho
		:=
		\{ y \in H \mid \exists x \in K : \norm{x - y}_H < \rho\}.
	\end{equation*}
	Then, $\pi_K$ is single-valued on $K_\rho$
	and we define $\proj_K(y)$ to be the single element in $\pi_K(y)$
	for all $y \in K_\rho$.
	Moreover,
	$\proj_K$ is Lipschitz continuous on $K_\rho$
	with rank $r / (r - \rho)$.
\end{theorem}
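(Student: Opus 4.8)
The plan is to prove the statement in three steps, matching its three assertions: (i) $\pi_K(y)$ has at most one element for each $y\in K_\rho$; (ii) $\pi_K(y)$ is nonempty for each $y\in K_\rho$; and (iii) the resulting single-valued map $\proj_K$ is Lipschitz on $K_\rho$ with rank $r/(r-\rho)$. Steps (i) and (iii) are short and follow directly from \eqref{eq:r_prox_reg} and the already-derived consequence \eqref{eq:inequality_prox_regular}; step (ii) is the delicate one. For step (i): if $y\in K_\rho$ and $x\in\pi_K(y)$, then taking $\lambda=1$ in the definition of the proximal normal cone in \cref{def:prox_reg} shows that $v:=y-x$ lies in $\NN_K^P(x)$ with $\norm{v}_H=\dist(y,K)<\rho<r$, so \eqref{eq:inequality_prox_regular} applies and gives $\norm{x'-(x+v)}_H>\norm{v}_H$, i.e.\ $\norm{x'-y}_H>\dist(y,K)$, for every $x'\in K\setminus\{x\}$; hence $\pi_K(y)=\{x\}$.

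Next I would establish step (iii). Let $y_1,y_2\in K_\rho$ with $x_i:=\proj_K(y_i)$, and set $v_i:=y_i-x_i\in\NN_K^P(x_i)$, so that $\norm{v_i}_H=\dist(y_i,K)<\rho$. Applying \eqref{eq:r_prox_reg} twice — with $(\bar x,v,x)$ equal to $(x_1,v_1,x_2)$ and to $(x_2,v_2,x_1)$ — and adding the two inequalities, then using $v_i=y_i-x_i$ together with the identity $\scalarprod{(y_1-x_1)-(y_2-x_2)}{x_2-x_1}_H=\scalarprod{y_1-y_2}{x_2-x_1}_H+\norm{x_1-x_2}_H^2$, one obtains
\[
r\,\scalarprod{y_1-y_2}{x_1-x_2}_H\;\ge\;\Bigl(r-\tfrac12\bigl(\norm{v_1}_H+\norm{v_2}_H\bigr)\Bigr)\norm{x_1-x_2}_H^2\;\ge\;(r-\rho)\,\norm{x_1-x_2}_H^2,
\]
and the Cauchy--Schwarz inequality applied to the left-hand side then yields $\norm{x_1-x_2}_H\le\tfrac{r}{r-\rho}\norm{y_1-y_2}_H$, which is exactly the claimed Lipschitz rank. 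Note that this argument uses only that the projections exist, not that they exist throughout $K_\rho$.

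The hard part is step (ii): in an infinite-dimensional $H$, closedness of $K$ does not by itself guarantee nearest points. My plan here is to set $D:=\{y\in K_\rho:\pi_K(y)\neq\emptyset\}$, observe $K\subseteq D$, and prove $D=K_\rho$. By step (iii), $\proj_K$ is Lipschitz on $D$, and this already makes $D$ closed in $K_\rho$: if $y_k\in D$ with $y_k\to y\in K_\rho$, then $(\proj_K(y_k))$ is Cauchy, hence converges to some $x\in K$, and $\norm{x-y}_H=\lim_k\norm{\proj_K(y_k)-y_k}_H=\lim_k\dist(y_k,K)=\dist(y,K)$, so $x\in\pi_K(y)$. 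Since every $y\in K_\rho$ is joined to a point of $K$ by a segment lying in $K_\rho$ and $D$ is closed in $K_\rho$, it then remains only to show that $D$ is open in $K_\rho$; this is the step where $r$-prox-regularity is genuinely used, forcing any minimizing sequence of $x\mapsto\norm{x-y}_H$ over $K$, with $y$ close to a point $y_0\in D$, to localize near $\proj_K(y_0)$ and to be Cauchy there. For this last point I expect to follow the analysis of \cite{PoliquinRockafellarThibault2000,ColomboThibault2010}, where the nonemptiness of $\pi_K$ on the tube $K_\rho$ is obtained from the characterization of $r$-prox-regularity of $K$ through the $C^{1,1}$-regularity of the Moreau envelope of $\delta_K$, whose gradient directly supplies the (then automatically unique) proximal point.
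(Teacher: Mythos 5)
The paper does not actually prove \cref{thm:lipschitz_projection}: it is quoted from \cite[Theorem~4.1, Lemma~4.2]{PoliquinRockafellarThibault2000} and \cite[Theorem~0.16]{ColomboThibault2010}, so there is no in-paper argument to compare against. Judged on its own, your proposal is correct and self-contained for two of the three assertions. The uniqueness argument is exactly the observation the paper itself records in the display after \eqref{eq:inequality_prox_regular} (``In particular, $\pi_K(\bar x + v) = \{\bar x\}$''), made rigorous by noting that any $y \in K_\rho$ with $x \in \pi_K(y)$ is of the form $x + v$ with $v = y - x \in \NN_K^P(x)$ and $\norm{v}_H < \rho < r$. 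Your derivation of the Lipschitz rank $r/(r-\rho)$ from two applications of \eqref{eq:r_prox_reg}, the polarization identity and Cauchy--Schwarz is also correct --- the algebra and the constant check out --- and it is a genuine added value over the paper, which offers no proof at all.

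The one incomplete step is the existence of nearest points on all of $K_\rho$, your step (ii), which in infinite dimensions is where the real content of the theorem lies. Your closed-plus-open-plus-connected scheme is structurally sound: the relative closedness of $D$ in $K_\rho$ follows from your step (iii) exactly as you say (your Lipschitz estimate only needs the two projections to exist), and the segment argument correctly reduces everything to the relative openness of $D$. But that openness cannot be extracted from \eqref{eq:r_prox_reg} by the kind of direct manipulation used in steps (i) and (iii): the inequality only constrains vectors that are already proximal normals, so it says nothing about a minimizing sequence for $x \mapsto \norm{x - y}_H$ whose terms need not be projections of anything. One genuinely needs an additional tool here --- Ekeland's variational principle to manufacture nearby exact projections, or the $C^{1,1}$-regularity of the Moreau envelope as in the works you cite. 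Since you defer precisely this point to \cite{PoliquinRockafellarThibault2000,ColomboThibault2010}, i.e.\ to the same references the paper invokes for the entire theorem, your proposal is, relative to the paper, at least as complete; as a standalone proof, however, the existence assertion remains unproven.
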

For later use, we recast \eqref{eq:inequality_prox_regular}
in the setting of \cref{thm:lipschitz_projection}.
Let us fix $p \in K_\rho$, $\rho \in (0, r)$.
We set $\bar x = \proj_K(p)$ and $v = p - \bar x \in \NN_K^P(\bar x)$.
Applying \eqref{eq:inequality_prox_regular} yields
\begin{equation}
	\label{eq:inequality_prox_regular_again}
	\begin{aligned}
		\norm{x - p}_H^2
		&\ge
		\Bigh(){1 - \frac{\norm{p - \bar x}_H} r}\,\norm{ x - \bar x}_H^2 + \norm{p - \bar x}_H^2
		\\
		&\ge
		\Bigh(){1 - \frac\rho r}\,\norm{ x - \bar x}_H^2 + \norm{p - \bar x}_H^2
		\qquad \forall x \in K
		.
	\end{aligned}
\end{equation}
In what follows, we first link the second-order epi-differentiability of $\delta_K$
to the differentiability of an auxiliary function.
\begin{lemma}
	\label{lem:epi_diff_j}
	Assume that the closed set $K \subset H$ is $r$-prox-regular for some $r > 0$.
	We fix $p_0 \in K_r$
	and set $\bar x = \proj_K(p_0)$.
	If $\delta_K$ is strongly twice epi-differentiable
	in $\bar x$ for $p_0 - \bar x$,
	then
	the function $j := \frac12 \, \norm{\cdot}_H^2 + \delta_K$ is strongly twice epi-differentiable
	in $\bar x$ for $p_0$.
\end{lemma}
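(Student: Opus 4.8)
The plan is to reduce the whole claim to a single algebraic identity for the second-order difference quotients and then to invoke the hypothesis on $\delta_K$ directly. First I would record the elementary facts that enter: $\bar x = \proj_K(p_0) \in K = \dom(\delta_K) \subset \dom(j)$, so that $j(\bar x) = \frac12\norm{\bar x}_H^2$ and $\delta_K(\bar x) = 0$; that $\norm{p_0 - \bar x}_H = \dist(p_0,K) < r$; and that, since $X = H$ is a Hilbert space, the weak-$\star$ topology is just the weak topology, so that \cref{def:weak_star_subderivative,def:second_order_derivative} apply as stated. It is also convenient to note that, $\bar x$ being the metric projection of $p_0$ onto $K$, one has $p_0 \in \partial j(\bar x)$, hence the second-order difference quotients of $j$ at $\bar x$ for $p_0$ are nonnegative and $Q_j^{\bar x, p_0}$ takes values in $[0,\infty]$.

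The key observation is that, expanding $\frac12\norm{\bar x + t\,w}_H^2$ and using $p_0 = \bar x + (p_0 - \bar x)$, one obtains for every $w \in X$ and every $t > 0$
\begin{equation*}
	\frac{j(\bar x + t\,w) - j(\bar x) - t\,\scalarprod{p_0}{w}_H}{t^2/2}
	=
	\frac{\delta_K(\bar x + t\,w) - \delta_K(\bar x) - t\,\scalarprod{p_0 - \bar x}{w}_H}{t^2/2}
	+ \norm{w}_H^2 .
\end{equation*}
Everything else is soft manipulation of this identity.

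The next step is to establish the pointwise formula $Q_j^{\bar x, p_0}(z) = Q_{\delta_K}^{\bar x, p_0 - \bar x}(z) + \norm{z}_H^2$ for all $z \in X$. For the inequality ``$\ge$'', I would take arbitrary admissible sequences $t_n \searrow 0$ and $z_n \weakly z$, insert $w = z_n$, $t = t_n$ into the identity, and pass to $\liminf_{n \to \infty}$: using weak lower semicontinuity of the norm ($\liminf_n \norm{z_n}_H^2 \ge \norm{z}_H^2$), the defining property of the second subderivative ($\liminf_n$ of the $\delta_K$-quotient is $\ge Q_{\delta_K}^{\bar x, p_0 - \bar x}(z)$ by \cref{def:weak_star_subderivative}), and superadditivity of $\liminf$, the bound follows after passing to the infimum over all such $(t_n, z_n)$. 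For ``$\le$'' and the construction of recovery sequences simultaneously, I would fix $z \in X$ and an arbitrary sequence $t_n \searrow 0$ and use the hypothesis --- strong twice epi-differentiability of $\delta_K$ in $\bar x$ for $p_0 - \bar x$ --- to obtain $z_n \to z$ in $X$ with
\begin{equation*}
	\frac{\delta_K(\bar x + t_n z_n) - \delta_K(\bar x) - t_n\scalarprod{p_0 - \bar x}{z_n}_H}{t_n^2/2} \longrightarrow Q_{\delta_K}^{\bar x, p_0 - \bar x}(z)
\end{equation*}
in $[-\infty,\infty]$. Strong convergence gives $\norm{z_n}_H^2 \to \norm{z}_H^2$, so the identity shows that the corresponding $j$-quotients converge to $Q_{\delta_K}^{\bar x, p_0 - \bar x}(z) + \norm{z}_H^2$; combined with ``$\ge$'' and the infimum in the definition of $Q_j^{\bar x, p_0}(z)$, this pins down the formula and simultaneously exhibits $\{z_n\}$ as a recovery sequence for $j$ in $\bar x$ for $p_0$ in direction $z$ in the sense of \cref{def:second_order_derivative}. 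Since $z \in X$ and $\{t_n\}$ were arbitrary, $j$ is strongly twice epi-differentiable in $\bar x$ for $p_0$; as a byproduct one gets $\KK_j^{\bar x, p_0} = \KK_{\delta_K}^{\bar x, p_0 - \bar x}$.

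I do not expect a genuine obstacle. The only points requiring a little care are the bookkeeping when $Q_{\delta_K}^{\bar x, p_0 - \bar x}(z) = +\infty$ (the identity and both arguments above remain valid in $[-\infty,\infty]$, a sequence with $\liminf = +\infty$ being automatically convergent to $+\infty$), and the verification that the $\liminf$'s never take the indeterminate form ``$\infty + (-\infty)$'' --- which is guaranteed because $z_n \weakly z$ forces $\{z_n\}$ to be bounded and because the nonnegativity of the $j$-quotients (a consequence of $p_0 \in \partial j(\bar x)$) keeps the $\delta_K$-quotients bounded below along bounded sequences. Beyond supplying $\bar x \in K$ and $\norm{p_0 - \bar x}_H < r$, prox-regularity and the closedness of $K$ play no role in this argument: the content of the lemma is exactly the identity above together with the assumed strong epi-differentiability of $\delta_K$.
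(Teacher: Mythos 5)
Your proposal is correct and follows essentially the same route as the paper's proof: the same algebraic identity splitting the second-order difference quotient of $j$ into the $\delta_K$-quotient plus $\norm{z_n}_H^2$, the lower bound via weak lower semicontinuity of the norm and superadditivity of $\liminf$, and the upper bound together with the recovery sequence obtained by transporting the strongly convergent recovery sequence for $\delta_K$, yielding $Q_j^{\bar x,p_0} = \norm{\cdot}_H^2 + Q_{\delta_K}^{\bar x,p_0-\bar x}$. Your extra remarks (that $p_0 \in \partial j(\bar x)$ and that boundedness of weakly convergent sequences rules out indeterminate forms) are accurate but not needed beyond what the paper records.
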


\begin{proof}
	By invoking the definition of $Q_j^{\bar x,p_0}$, for every $z \in H$, we find
	\begin{align*}
		&Q_j^{\bar x,p_0}(z)
		\\
		&=
		\inf
		\biggh\{\}{
			\liminf_{n \to \infty} \frac{\delta_K(\bar x + t_n \, z_n)  - \delta_K(\bar x) 
			 - t_n \, \scalarprod{p_0 - \bar x}{z_n}_H}{t_n^2/2}  +   \norm{ z_n}_H^2
			\mid
			t_n \searrow 0,
			z_n \weakly z
		}
		\\
		&\ge  \norm{ z}_H^2 +
		\inf
		\biggh\{\}{
			\liminf_{n \to \infty} \frac{\delta_K(\bar x + t_n \, z_n)  - \delta_K(\bar x) 
			 - t_n \, \scalarprod{p_0 - \bar x}{z_n}_H}{t_n^2/2}    
			\mid
			t_n \searrow 0,
			z_n \weakly  z
		}
		\\
		&= 
		\norm{z}_H^2 + Q_{\delta_K}^{\bar x, p_0-\bar x}(z).
	\end{align*}
	On the other hand, by the strong second-order epi-differentiability of $\delta_K$ in $\bar x$ for $p_0 - \bar x$,
	given $t_n \searrow 0$, we find for every $z \in H$ a sequence $z_n$ with $z_n \to z$ and
	\begin{align*}
		\norm{z}_H^2 + Q_{\delta_K}^{\bar x,p_0-\bar x}(z)
		&=
		\norm{z}_H^2 + \lim_{n \to \infty} \frac{\delta_K(\bar x + t_n \, z_n)  - \delta_K(\bar x) 
			 - t_n \, \scalarprod{p_0 - \bar x}{z_n}_H}{t_n^2/2}
		\\
		&=
		\lim_{n \to \infty}\frac{j(\bar x + t_n \, z_n) - j(\bar x) - t_n \, \scalarprod{p_0}{z_n}_H}{t_n^2/2}
		\ge
		Q_j^{\bar x,p_0}(z).
	\end{align*}
	These two inequalities
	show that
	$j$ is strongly twice epi-differentiable at $\bar x$ for $p_0$
	with
	\begin{equation}
		\label{eq:epi_umrechnen}
		Q_j^{\bar x,p_0}(z)
		=
		\norm{z}_H^2 + Q_{\delta_K}^{\bar x,p_0-\bar x}(z)
		\qquad\forall z \in H.
	\end{equation}
\end{proof}

Now, we present the main theorem of this section.
\begin{theorem}
	\label{thm:diff_prox_reg}
	Assume that the closed set $K \subset H$ is $r$-prox-regular for some $r > 0$.
	We fix $p_0 \in K_r$
	and set $\bar x := \proj_K(p_0)$.
	Then
	the following assertions are equivalent.
	\begin{enumerate}
		\item
			\label{item:prox_reg_2}
			For all $\rho \in (\norm{p_0 - \bar x}_H , r)$,
			the function $j := \frac12 \, \norm{\cdot}_H^2 + \delta_K$ is strongly twice epi-differentiable
			in $\bar x$ for $p_0 + (1-\frac\rho r) (p_0-\bar x)$.
		\item
			\label{item:prox_reg_3}
			For some $\rho \in (\norm{p_0 - \bar x}_H , r)$,
			the function $j$ is strongly twice epi-differentiable
			in $\bar x$ for $p_0 + (1-\frac\rho r) (p_0-\bar x)$.
		\item
			\label{item:prox_reg_4}
			The projection $\proj_K$ is Hadamard directionally differentiable at $p_0$.
	\end{enumerate}
\end{theorem}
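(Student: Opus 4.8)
The plan is to realize $\proj_K$, localized near $p_0$, as the solution operator of an elliptic variational inequality covered by \cref{th:elliptic}, and then to translate the resulting second-order epi-differentiability condition. Write $\bar x := \proj_K(p_0)$ and $v := p_0 - \bar x \in \NN_K^P(\bar x)$; since $p_0 \in K_r$ we have $\norm{v}_H < r$, so fix any $\rho \in (\norm{v}_H, r)$. With $X := Y := P := H$, consider the VI~\eqref{eq:VI} with the data
\[
	j := \tfrac12\,\norm{\,\cdot\,}_H^2 + \delta_K,
	\qquad
	A(p,x) := \bigl(\tfrac r\rho - 1\bigr)\,x - \tfrac r\rho\,p .
\]
Then $j$ is proper, so \cref{asm:data} holds; $A$ is affine (hence Fréchet differentiable) with $A_x = (\tfrac r\rho - 1)\,\mathrm{Id}_H$ and $A_p = -\tfrac r\rho\,\mathrm{Id}_H$ surjective; \eqref{eq:monotonicityassumption} holds with $c = \tfrac r\rho - 1 > 0$; and \eqref{eq:lipschitzassumption} holds with $C = \tfrac r\rho$. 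An elementary rearrangement shows that, for $\bar x_p \in K$, this VI reads
\[
	\norm{x - p}_H^2 \;\ge\; \norm{\bar x_p - p}_H^2 + \bigl(1 - \tfrac\rho r\bigr)\norm{x - \bar x_p}_H^2 \qquad \forall x \in K .
\]
For $p$ close to $p_0$ one has $\norm{p - \proj_K(p)}_H \le \norm{p - \bar x}_H < \rho$, hence $p \in K_\rho$, and then~\eqref{eq:inequality_prox_regular_again} says precisely that $\bar x_p := \proj_K(p)$ satisfies the displayed inequality; conversely any solution $\bar x_p$ lies in $K$ and satisfies $\norm{x-p}_H \ge \norm{\bar x_p - p}_H$ for all $x \in K$, so $\bar x_p \in \pi_K(p)$ and therefore $\bar x_p = \proj_K(p)$ by \cref{thm:lipschitz_projection}. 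Consequently, on a suitable ball $B_R^P(p_0)$ the VI is uniquely solvable with solution operator $S = \proj_K$.

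Thus all hypotheses of \cref{th:elliptic} are met, with $\bar x_0 = \bar x$ and $a_0 = -A(p_0,\bar x) = \bar x + \tfrac r\rho\,v$, and the theorem yields that $\proj_K$ is Hadamard directionally differentiable at $p_0$ --- statement~\ref{item:prox_reg_4} --- if and only if $j = \tfrac12\norm{\,\cdot\,}_H^2 + \delta_K$ is strongly twice epi-differentiable in $\bar x$ for $\bar x + \tfrac r\rho\,v$. (Since $A_x = (\tfrac r\rho - 1)\mathrm{Id}_H$ is elliptic, the difference quotients converge in norm, so this is exactly the notion of Hadamard differentiability appearing in~\ref{item:prox_reg_4}; and using~\eqref{eq:r_prox_reg} together with $\norm{v}_H < \rho$ one checks that $\bar x + \tfrac r\rho v \in \partial j(\bar x)$, as it must be.)

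It remains to recognize this condition as~\ref{item:prox_reg_2}/\ref{item:prox_reg_3}, which is routine second-subderivative calculus. By the argument in the proof of \cref{lem:epi_diff_j} --- adding or subtracting the smooth quadratic $\tfrac12\norm{\,\cdot\,}_H^2$ in the second-order difference quotients, and using the unconditionally valid inequality $Q_{\frac12\norm{\cdot}_H^2 + \delta_K}^{\bar x,\,\bar x + g}(z) \ge \norm{z}_H^2 + Q_{\delta_K}^{\bar x,g}(z)$ --- the function $\tfrac12\norm{\,\cdot\,}_H^2 + \delta_K$ is strongly twice epi-differentiable in $\bar x$ for $\bar x + g$ if and only if $\delta_K$ is strongly twice epi-differentiable in $\bar x$ for $g$, for every $g \in H$. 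Moreover, since $g$ enters the second-order difference quotient of the indicator function $\delta_K$ only through the linear term $-t_n \scalarprod{g}{z_n}_H$, one has $Q_{\delta_K}^{\bar x,\,\mu g} = \mu\,Q_{\delta_K}^{\bar x,g}$ for all $\mu > 0$, so strong twice epi-differentiability of $\delta_K$ in $\bar x$ for $g$ depends only on the ray $\R^+ g$. Since $\bar x + \tfrac r\rho v = \bar x + \tfrac r\rho(p_0 - \bar x)$ with $\tfrac r\rho > 0$, and $p_0 + (1 - \tfrac\rho r)(p_0 - \bar x) = \bar x + (2 - \tfrac\rho r)(p_0 - \bar x)$ with $2 - \tfrac\rho r > 0$, it follows that statement~\ref{item:prox_reg_4}, statement~\ref{item:prox_reg_2} for any admissible $\rho$, and statement~\ref{item:prox_reg_3} for some admissible $\rho$, are all equivalent to the single condition that $\delta_K$ is strongly twice epi-differentiable in $\bar x$ for $p_0 - \bar x$. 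This proves \ref{item:prox_reg_2} $\Leftrightarrow$ \ref{item:prox_reg_3} $\Leftrightarrow$ \ref{item:prox_reg_4}.

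The step I expect to be the main obstacle is the construction of this auxiliary VI. Because $K$ is only prox-regular, the naive operator $A(p,x) = -p$ --- whose VI with $j = \tfrac12\norm{\,\cdot\,}_H^2 + \delta_K$ also has solution operator $\proj_K$ near $p_0$ --- is not admissible for \cref{th:elliptic}, since it is not strongly monotone. The fix is to multiply the operator by the factor $\tfrac r\rho$ with $\rho$ strictly between $\norm{p_0 - \bar x}_H$ and $r$: the slack $\norm{p - \proj_K(p)}_H < \rho < r$ in the quantitative prox-regularity inequality~\eqref{eq:inequality_prox_regular_again} is exactly what permits this rescaling without changing the solution operator, while turning $A(p_0,\cdot)$ into a strongly monotone map. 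Once this is set up, both the verification of the hypotheses of \cref{th:elliptic} and the second-subderivative bookkeeping are straightforward.
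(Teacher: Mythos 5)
Your construction of the auxiliary elliptic VI is sound and is essentially the paper's argument in disguise: the paper takes $A(p,x) = (1-\frac\rho r)(x-p)-p$, so that $a_0 = -A(p_0,\bar x) = p_0 + (1-\frac\rho r)(p_0-\bar x)$ comes out matching the statement verbatim, whereas your rescaled operator gives $a_0 = \bar x + \frac r\rho(p_0-\bar x)$; both choices verify the hypotheses of \cref{th:elliptic} and identify the local solution operator with $\proj_K$ via \eqref{eq:inequality_prox_regular_again}. The problem is your final ``routine second-subderivative calculus'' step. You claim that $\tfrac12\norm{\cdot}_H^2+\delta_K$ is strongly twice epi-differentiable in $\bar x$ for $\bar x+g$ \emph{if and only if} $\delta_K$ is strongly twice epi-differentiable in $\bar x$ for $g$, and you reduce all three assertions to the single condition that $\delta_K$ is strongly twice epi-differentiable in $\bar x$ for $p_0-\bar x$. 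Only the ``if'' direction of that biconditional is true (this is \cref{lem:epi_diff_j}); the ``only if'' direction fails in infinite dimensions, and the paper's own counterexample at the end of \cref{subsec:prox_regular} refutes it: for $K=\{x\in H \mid \norm{x}_H\ge 1\}$, $p_0=\tfrac12 e_1$, $\bar x=e_1$, the projection is Hadamard directionally differentiable at $p_0$ and $j$ is strongly twice epi-differentiable in $\bar x$ for $p_0$, yet $Q_{\delta_K}^{\bar x,p_0-\bar x}(e_2)=-\infty$, so \eqref{eq:epi_umrechnen} fails and $\delta_K$ is \emph{not} strongly twice epi-differentiable in $\bar x$ for $p_0-\bar x$ (nor for any positive multiple of it, since no strongly convergent sequence $z_n\to e_2$ with $\bar x+t_nz_n\in K$ can drive the quotient $-2t_n^{-1}\scalarprod{p_0-\bar x}{z_n}_H$ to $-\infty$). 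Hence your concluding chain of equivalences would force (iii) to be false in that example, which it is not; the condition you reduce everything to is strictly stronger than the theorem's assertions. The obstruction you failed to anticipate is exactly that weak-$\star$ infimizing sequences for $Q_{\delta_K}^{\bar x,g}$ need not converge in norm, so one cannot subtract the quadratic term from the second-order difference quotients.

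The repair is short and avoids the decomposition entirely: never pass to $\delta_K$. For a prescribed $\rho'\in(\norm{p_0-\bar x}_H,r)$ in the statement, choose $\rho := r/(2-\rho'/r)$; one checks $\rho\in(\norm{p_0-\bar x}_H,r)$ using $(r-\norm{p_0-\bar x}_H)^2>0$, and then your $a_0=\bar x+\frac r\rho(p_0-\bar x)$ equals $p_0+(1-\frac{\rho'}r)(p_0-\bar x)$. Thus, for every admissible $\rho'$, \cref{th:elliptic} gives the equivalence of \ref{item:prox_reg_4} with the strong second-order epi-differentiability of $j$ in $\bar x$ for $p_0+(1-\frac{\rho'}r)(p_0-\bar x)$; since \ref{item:prox_reg_4} does not depend on $\rho'$, the three assertions are equivalent. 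This is precisely how the paper argues (with its own parametrization of $A$, which makes even the reparametrization unnecessary).
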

\begin{proof}
	We show that for each fixed $\rho \in (\norm{p_0 - \bar x}_H, r)$,
	assertion
	\ref{item:prox_reg_4}
	is equivalent to the strong second-order epi-differentiability of $j$
	in $\bar x$ for $p_0 + (1-\frac\rho r) (p_0-\bar x)$.

	We are going to apply \cref{th:elliptic}
	with the setting $X = Y = P = H$,
	\begin{equation*}
		A(p,x) = \bigh(){1 - \frac\rho r}(x - p) - p,
		\quad
		j(x) = \frac12\,\norm{x}_H^2 + \delta_K(x)
		.
	\end{equation*}
	We first check that the VI \eqref{eq:VI_again33} 
	has a unique solution for all $p \in K_\rho$ and that this solution is exactly the projection of $p$ onto $K$.
	To this end, we set $\bar x_p := \proj_K(p)$, and note that \eqref{eq:r_prox_reg} and \eqref{eq:inequality_prox_regular_again}
	yield
	\begin{align*}
		\dual{A(p,\bar x_p)}{x - \bar x_p} + j(x) - j(\bar x_p)
		&=
		\bigh(){1 - \frac\rho r} \, \scalarprod{\bar x_p - p}{x - \bar x_p}_H
		+ \frac{\norm{x - p}_H^2}{2} - \frac{\norm{\bar x_p - p}_H^2}{2}
		\\
		&\ge
		- \bigh(){1 - \frac\rho r} \, \frac{\norm{\bar x_p - p}_H}{2 \, r} \norm{x - \bar x_p}_H^2
		+\bigh(){1 - \frac\rho r} \,  \frac{\norm{x - \bar x_p}_H^2}{2}
		\\
		&\ge 0\qquad \forall x \in K.
	\end{align*}
	By the ellipticity of $A$, the solution of \eqref{eq:VI_again33} is also unique.
	Hence, $\bar x_p = \proj_K(p)$ is the unique solution of \eqref{eq:VI_again33} with parameter $p$
	for all $p \in K_\rho$. 
	The equivalence of 
	\ref{item:prox_reg_3} and \ref{item:prox_reg_4}
	now follows immediately from \cref{th:elliptic}.
\end{proof}
From the last theorem, we easily get the following corollary,
which is proven in
\cite[Proposition~2.2]{Noll1995} in the case that $K$ is convex.
\begin{corollary}
	\label{cor:diff_segments}
	Assume that the closed set $K \subset H$ is $r$-prox-regular for some $r > 0$.
	We fix $p_0 \in K_r$
	and set $\bar x := \proj_K(p_0)$, $v = (p_0 - \bar x)/\norm{p_0 - \bar x}_H$.
	Then
	the following assertions are equivalent.
	\begin{enumerate}
		\item
			\label{item:segment_1}
			The projection $\proj_K$ is Hadamard differentiable
			in $\bar x + \rho \, v$
			for one $\rho \in (0,r)$.
		\item
			\label{item:segment_2}
			The projection $\proj_K$ is Hadamard differentiable
			in $\bar x + \rho \, v$
			for all $\rho \in (0,r)$.
		\item
			\label{item:segment_3}
			For one $\tilde\rho \in (0,r)$,
			the function $j$ is strongly twice epi-differentiable
			in $\bar x$ for $\bar x + \tilde\rho \, v$.
		\item
			\label{item:segment_4}
			For all $\tilde\rho \in (0,r)$,
			the function $j$ is strongly twice epi-differentiable
			in $\bar x$ for $\bar x + \tilde\rho \, v$.
	\end{enumerate}
\end{corollary}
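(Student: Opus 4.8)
The plan is to reduce everything to \cref{thm:diff_prox_reg} by sliding the base point along the ray $\rho \mapsto \bar x + \rho v$, $\rho \in (0,r)$. The preliminary observation needed is that $\proj_K(\bar x + \rho v) = \bar x$ for every such $\rho$: since $p_0 - \bar x \in \NN_K^P(\bar x)$ and $\NN_K^P(\bar x)$ is a cone, the vector $\rho v = \tfrac{\rho}{\norm{p_0-\bar x}_H}(p_0 - \bar x)$ again lies in $\NN_K^P(\bar x)$ and has norm $\rho < r$, so \eqref{eq:inequality_prox_regular} gives $\pi_K(\bar x + \rho v) = \{\bar x\}$; in particular $\bar x + \rho v \in K_r$ and $\proj_K(\bar x + \rho v) = \bar x$.

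Next I would fix $\rho \in (0,r)$ and apply \cref{thm:diff_prox_reg} with $p_0$ replaced by $\bar x + \rho v$, noting that the associated projection point is again $\bar x$ and that $\norm{(\bar x + \rho v) - \bar x}_H = \rho$. Writing the parameter in assertions \ref{item:prox_reg_2}/\ref{item:prox_reg_3} of that theorem as $\sigma \in (\rho, r)$, the covector for which epi-differentiability is tested there equals
\begin{equation*}
	(\bar x + \rho v) + \bigh(){1 - \tfrac\sigma r}\bigh(){(\bar x + \rho v) - \bar x}
	=
	\bar x + \rho\bigh(){2 - \tfrac\sigma r}\, v ,
\end{equation*}
and $\sigma \mapsto \rho(2 - \sigma/r)$ is a decreasing bijection from $(\rho, r)$ onto $I_\rho := \bigh(){\rho,\, 2\rho - \rho^2/r} \subseteq (0,r)$. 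Hence \cref{thm:diff_prox_reg} yields, for each $\rho \in (0,r)$,
\begin{equation*}
	\proj_K \text{ is Hadamard dir.\ diff.\ in } \bar x + \rho v
	\iff
	\bigh[]{j \text{ is strongly twice epi-diff.\ in } \bar x \text{ for } \bar x + \tilde\rho v\ \forall\, \tilde\rho \in I_\rho}
	\iff
	\bigh[]{\text{the same for some } \tilde\rho \in I_\rho} .
\end{equation*}
From this, the implications \ref{item:segment_4} $\Rightarrow$ \ref{item:segment_2} $\Rightarrow$ \ref{item:segment_1} $\Rightarrow$ \ref{item:segment_3} are immediate (for the last one one uses $I_{\rho_0} \subseteq (0,r)$).

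The only nontrivial step is \ref{item:segment_3} $\Rightarrow$ \ref{item:segment_4}, which I would settle by a connectedness argument. Set $P := \{\tilde\rho \in (0,r) \mid j \text{ is strongly twice epi-differentiable in } \bar x \text{ for } \bar x + \tilde\rho v\}$. The ``for some $\iff$ for all'' part of the equivalence above says that, for each $\rho$, either $I_\rho \subseteq P$ or $I_\rho \cap P = \emptyset$. Since the intervals $\{I_\rho\}_{\rho \in (0,r)}$ cover $(0,r)$ and consecutive ones overlap (the right endpoint $2\rho - \rho^2/r$ is continuous, increasing, and strictly exceeds $\rho$), this forces $P$ to be both open and closed in $(0,r)$; as $(0,r)$ is connected and $P \ne \emptyset$ by \ref{item:segment_3}, we conclude $P = (0,r)$, which is \ref{item:segment_4}. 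A secondary, purely terminological point is that Hadamard \emph{directional} differentiability of $\proj_K$ at the points $\bar x + \rho v$ is the notion meant in \ref{item:segment_1}/\ref{item:segment_2}, and it recovers \cite[Proposition~2.2]{Noll1995} in the convex case. I expect the main difficulty to be bookkeeping rather than anything deep: one must pin down exactly which covector $\bar x + \tilde\rho v$ is produced when \cref{thm:diff_prox_reg} is invoked at $\bar x + \rho v$, observe that a single base point reaches only the subinterval $I_\rho$, and hence realise that the propagation step along the ray is what upgrades ``for one'' to ``for all''.
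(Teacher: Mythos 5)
Your proposal is correct and follows essentially the same route as the paper: both reduce to \cref{thm:diff_prox_reg} applied at the shifted base point $\bar x + \rho v$, compute that the relevant covectors sweep out exactly the interval $I_\rho = \bigl(\rho, (2-\tfrac{\rho}{r})\rho\bigr)$, and then propagate the property along the ray by a connectedness argument. The paper phrases that last step as connecting points of $\{(\rho,\tilde\rho) \mid \rho < \tilde\rho < (2-\tfrac{\rho}{r})\rho\}$ by axis-parallel polygonal paths, which is the two-dimensional rendering of your ``$P$ is clopen in $(0,r)$'' argument; the content is identical.
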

\begin{proof}
	Let us fix $\rho \in (0,r)$
	and $\tilde\rho \in \bigh(){\rho, (2-\frac\rho r)\,\rho}$.
	Then,
	\cref{thm:diff_prox_reg} shows that
	\ref{item:segment_1}
	holds for $\rho$
	if and only if
	\ref{item:segment_3}
	holds for $\tilde\rho$.

	Now, any two points in the set
	$\bigh\{\}{(\rho, \tilde\rho) \in (0,r)^2 \mid \rho < \tilde\rho < (2-\frac\rho r)\,\rho}$
	can be connected by a finite polygonal path
	whose edges are parallel to the coordinate axes.
	This yields the claim.
\end{proof}

Before we conclude this section, 
we remark that the converse of \cref{lem:epi_diff_j}
can be easily established if the space $H$ is finite dimensional.
In infinite dimensions, however, this reverse implication does not hold
anymore in general. 
Consider, for example, the set
$K = \{x \in H \mid \norm{x}_H \ge 1\}$
in an infinite-dimensional Hilbert space $H$
with orthonormal system $\{e_i\}_{i\in\N}$.
It is easy to check that $K$ is $1$-prox-regular
and that the projection onto $K$ is directionally differentiable
on $H \setminus \{0\}$.
Define $p_0:= \frac12 e_1$ and $\bar x := e_1 = \proj_K(p_0)$.
Then \cref{cor:diff_segments} and standard arguments yield
that $j := \frac12 \norm{\cdot}_H^2 + \delta_K$
is strongly twice epi-differentiable in $\bar x$ for $p_0$
and that $p_0 \in \partial j(\bar x)$.
In particular, 
it holds $Q_j^{\bar x, p_0}(z) \ge 0$ for all $z \in H$ by 
 \cref{lem:basic_properties}. For a fixed $L > 2$, on the other hand, we have
$-L\,t_n\,e_1 + e_2 + L \, e_n \weakly e_2$,
$\bar x + t_n \, (-L\,t_n\,e_1 + e_2 + L \, e_n) \in K$ for $n > 2$
and
\begin{equation*}
	-\frac2{t_n} \scalarprod{p_0 - \bar x}{-L\,t_n\,e_1 + e_2 + L \, e_n}_H
	=
	-L.
\end{equation*}
The above implies $Q_{\delta_K}^{\bar x, p_0 - \bar x}(e_2) = -\infty$. This shows that 
\eqref{eq:epi_umrechnen} indeed cannot be satisfied.

\subsection{Bang-Bang Optimal Control Problems}
\label{subsec:bang_bang}
In what follows, we consider  optimal control problems of the form
\begin{equation}
	\label{eq:bang-bang}
	\begin{aligned}
		\text{Minimize} \quad & F(u) := \int_\Omega L( \cdot, G(u)) \, \d x \\
		\text{such that} \quad & -1 \le u \le 1 \text{ a.e.\ in } \Omega.
	\end{aligned}
\end{equation}
Here,
$\Omega \subset \R^d$ is a  bounded domain
and $G : L^2(\Omega) \to L^2(\Omega)$ is the control-to-state map.
The function $F : L^2(\Omega) \to \R$ is called the reduced objective functional.
We are interested in stability properties of a bang-bang solution
$\bar u$ of \eqref{eq:bang-bang},
i.e., a local solution of \eqref{eq:bang-bang} which satisfies $\bar u \in \{-1,1\}$ a.e.\ on $\Omega$.

In order to give a more tangible example,
we will mainly focus on the optimal control of a semilinear partial differential equation (PDE) in the case $d \in \{1,2,3\}$.
We emphasize that 
our arguments also apply to a broader class of problems,
see \cref{rem:bang_bang_more_applications} below.

So let us consider a bounded domain $\Omega \subset \R^d$, $d \in \{1,2,3\}$,
with a Lipschitz boundary.
We assume that the state $G(u)$ associated with the control 
$u \in \Uad  := \{ v \in L^\infty(\Omega) \mid -1 \le v \le 1 \text{ a.e.\ in } \Omega\}$ is defined to be the weak solution $y \in H_0^1(\Omega)$ of  the PDE
\begin{equation}
	\label{eq:pde}
	-\Delta y + f(\cdot, y) = u \text{ in }\Omega,
	\qquad
	y = 0 \text{ on } \partial\Omega.
\end{equation}
We further suppose that the functions  $f$ and $L$ appearing in \eqref{eq:bang-bang} and \eqref{eq:pde} satisfy
the following
common assumptions,
cf.\ \cite{Casas2012:1,CasasWachsmuthWachsmuth2016:1,NguyenWachsmuth2017:1}.

\begin{assumption}
	\label{asm:on_f_L}
The functions
$L, f : \Omega \times \R \to \R$
are Carathéodory mappings that are
$C^2$ w.r.t.\ their second argument.
Moreover, the following conditions hold true.
	\begin{enumerate}
		\item
			We have $L(\cdot, 0) \in L^1(\Omega)$.
			For each $M > 0$, there is a constant $C_{L,M}$ and a function $\psi_M \in L^2(\Omega)$ such that
			\begin{gather*}
				\Bigabs{ \frac{\partial L}{\partial y}(x,y) } \le \psi_M(x), \qquad
				\Bigabs{ \frac{\partial^2 L}{\partial y^2}(x,y) } \le C_{L,M},
				\\
				\Bigabs{ \frac{\partial^2 L}{\partial y^2}(x,y_1) - \frac{\partial^2 L}{\partial y^2}(x,y_2) } \le C_{L,M} \, \abs{y_1 - y_2}
			\end{gather*}
			hold for a.a.\ $x \in \Omega$ and all $\abs{y}, \abs{y_2}, \abs{y_2} \le M$.
		\item
			We have $f(\cdot, 0) \in L^2(\Omega)$ and $\frac{\partial f}{\partial y}(x,y) \ge 0$ for a.a.\ $x \in \Omega$ and all $y \in \R$.
			For each $M > 0$, there is a constant $C_{f,M}$ such that
			\begin{align*}
				\Bigabs{ \frac{\partial f}{\partial y}(x,y) } +
				\Bigabs{ \frac{\partial^2 f}{\partial y^2}(x,y) } &\le C_{f,M},
				&
				\Bigabs{ \frac{\partial^2 f}{\partial y^2}(x,y_1) - \frac{\partial^2 f}{\partial y^2}(x,y_2) } &\le C_{f,M} \, \abs{y_1 - y_2}
			\end{align*}
			hold for a.a.\ $x \in \Omega$ and all $\abs{y}, \abs{y_2}, \abs{y_2} \le M$.
	\end{enumerate}
\end{assumption}
We remark that it is possible to weaken the Lipschitz assumption on the second derivatives of $f$ and $L$ in the above 
to a kind of uniform continuity, see, e.g., \cite[Assumptions~(A1), (A2)]{NguyenWachsmuth2017:1}.
We will not pursue this approach here. 

Note that the conditions in \cref{asm:on_f_L} imply
the following differentiability results for
the control-to-state map $G$ and the reduced objective $F$.

\begin{lemma}
\label{lem:diff_bang_bang}
The control-to-state map $G : L^2(\Omega) \to L^2(\Omega)$ and the reduced objective $F : L^2(\Omega) \to \R$ are well-defined and 
twice continuously Fréchet differentiable.
For all \mbox{$u \in L^2(\Omega)$},
the first derivative of $F$ satisfies 
$F'(u) \in C_0(\Omega)$
and the second derivative $F''(u)$ can be extended from $L^2(\Omega)$
to a continuous bilinear form on $\MM(\Omega)$.
Moreover, it holds $F''(u)\mu \in C_0(\Omega)$ for all $\mu \in \MM(\Omega)$
and
$\mu_k \weaklystar \mu$ in $\MM(\Omega)$
implies $F''(u) \, \mu_k \to F''(u) \, \mu$ in $C_0(\Omega)$.
For all $u \in \Uad$, the derivative $G'(u) : L^2(\Omega) \to L^2(\Omega)$
can be extended to a mapping $G'(u) : \MM(\Omega) \to L^2(\Omega)$,
which maps weakly-$\star$ convergent sequences to strongly convergent sequences,
and there exists an adjoint $G'(u)\adjoint : L^2(\Omega) \to C_0(\Omega)$ satisfying
\begin{equation}
	\label{eq:adjoint_linearized}
	\scalarprod{p}{G'(u) \, \mu}_{L^2(\Omega)}
	=
	\dual{G'(u)\adjoint p}{\mu}
	\qquad\forall p \in L^2(\Omega), \mu \in \MM(\Omega).
\end{equation}

The mappings $F$ and $G$ are compact in the sense that
for
all small $p_1 \in L^2(\Omega)$
and
all sequences $\{u_k\} \subset \Uad$
with $u_k \weaklystar u$ in $L^\infty(\Omega)$,
we have
$F(u_k) \to F(u)$ in $\R$ and
$G(u_k + p_1) \weakly G(u + p_1)$ in $L^2(\Omega)$.

There is a $\delta > 0$ such that for some $C > 0$, the estimates
\begin{align}
	\label{eq:diff_bang_bang_1}
	\norm{ G'(u_1 + p_1)v - G'(u_2)v }_{L^2(\Omega)}
	&
	\le C \, \norm{u_1 + p_1 - u_2}_{L^2(\Omega)} \, \norm{v}_{L^1(\Omega)}
	\\
	\label{eq:diff_bang_bang_2}
	\norm{ G'(u_1)v }_{L^2(\Omega)}
	&\le C \, \norm{v}_{L^1(\Omega)}
	\\
	\label{eq:diff_bang_bang_4}
	\bigabs{ F''(u_1 + p_1) [v_1, v_2] }
	&\le
	C \, \norm{v_1}_{L^1(\Omega)} \, \norm{v_2}_{L^1(\Omega)}
\end{align}
hold
for all $u_1, u_2 \in \Uad$, $v,v_1,v_2 \in L^2(\Omega)$ and $p \in L^2(\Omega)$ with $\norm{p}_{L^2(\Omega)} \le \delta$.

For every $\varepsilon > 0$, there is $\delta > 0$ such that
\begin{equation}
	\label{eq:diff_bang_bang_3}
	\bigabs{ F''(u_1 + p_1) [v_1, v_2] - F''(u_2) [v_1, v_2] }
	\le
	\varepsilon \, \norm{v_1}_{L^1(\Omega)} \, \norm{v_2}_{L^1(\Omega)}
\end{equation}
for all $u_1, u_2 \in \Uad$, $v_1,v_2 \in L^2(\Omega)$, $p \in L^2(\Omega)$
with
$\norm{u_1 - u_2}_{L^2(\Omega)} \le \delta$ and
$\norm{p}_{L^2(\Omega)} \le \delta$.
\end{lemma}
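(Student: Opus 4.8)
The plan is to derive all the listed assertions from the standard elliptic theory for the semilinear equation \eqref{eq:pde} together with the structural bounds in \cref{asm:on_f_L}, the guiding principle being that everything reduces to $L^\infty$- and $C_0$-regularity of solutions of linear equations of the form $-\Delta w + c\,w = h$ with $0 \le c \in L^\infty(\Omega)$ and to transposition arguments. I would take for granted (cf.\ \cite{Casas2012:1,CasasWachsmuthWachsmuth2016:1,NguyenWachsmuth2017:1}) that $G : L^2(\Omega) \to L^2(\Omega)$ and $F : L^2(\Omega) \to \R$ are well-defined and twice continuously Fréchet differentiable; this follows from a Stampacchia $L^\infty$-bound for the state together with the implicit function theorem applied to the map $(y,u) \mapsto -\Delta y + f(\cdot,y) - u$, whose partial derivative $-\Delta + \partial_y f(\cdot,y)$ in $y$ is boundedly invertible $H_0^1(\Omega) \to H^{-1}(\Omega)$ because of the sign condition on $\partial_y f$. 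The derivatives are then represented through adjoint states: $G'(u)v = z$ is the weak solution of $-\Delta z + \partial_y f(\cdot, G(u))\,z = v$, and a short computation using the adjoint equation $-\Delta \varphi + \partial_y f(\cdot, G(u))\,\varphi = \partial_y L(\cdot, G(u))$ yields $F'(u) = G'(u)\adjoint\bigl(\partial_y L(\cdot, G(u))\bigr)$ and
\begin{align*}
  F''(u)[v_1,v_2]
  &=
  \int_\Omega \kappa_u \, \bigl(G'(u)v_1\bigr) \bigl(G'(u)v_2\bigr) \, \d x,
  \\
  \kappa_u
  &:=
  \partial_y^2 L(\cdot, G(u)) - \varphi \, \partial_y^2 f(\cdot, G(u)),
\end{align*}
where, by \cref{asm:on_f_L} and the $L^\infty$-bound on $\varphi$, the kernel $\kappa_u$ lies in $L^\infty(\Omega)$ with a norm that is bounded uniformly for $u \in \Uad$.

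The analytic core is the regularity of the adjoint operator. For $d \in \{1,2,3\}$ one has $L^2(\Omega) \hookrightarrow L^q(\Omega)$ with $q = 2 > d/2$, so Stampacchia's theorem and De Giorgi--Nash--Moser regularity (valid on Lipschitz domains) show that for $0 \le c \in L^\infty(\Omega)$ the solution operator $h \mapsto w$ of $-\Delta w + c\,w = h$, $w \in H_0^1(\Omega)$, maps $L^2(\Omega)$ continuously into $C^{0,\alpha}(\overline{\Omega}) \cap C_0(\Omega)$ for some $\alpha > 0$, hence compactly into $C_0(\Omega)$ by Arzelà--Ascoli; moreover the operator norm depends only on $\|c\|_{L^\infty(\Omega)}$, which for $c = \partial_y f(\cdot, G(u))$ is uniformly bounded over $u \in \Uad$ since $\|G(u)\|_{L^\infty(\Omega)}$ is. Consequently $G'(u)\adjoint : L^2(\Omega) \to C_0(\Omega)$ is bounded (and compact), which already gives $F'(u) \in C_0(\Omega)$ because $\partial_y L(\cdot, G(u)) \in L^2(\Omega)$ by \cref{asm:on_f_L}. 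Transposing, $G'(u) = (G'(u)\adjoint)\adjoint$ extends to a bounded operator $\MM(\Omega) \to L^2(\Omega)$ that is compact and weak-$\star$-to-weak sequentially continuous (test against $C_0(\Omega)$ using \eqref{eq:adjoint_linearized}); since weak-$\star$ convergent sequences in $\MM(\Omega)$ are bounded, a subsequence argument forces $G'(u)\mu_k \to G'(u)\mu$ strongly in $L^2(\Omega)$ whenever $\mu_k \weaklystar \mu$, and restricting to $L^1(\Omega) \hookrightarrow \MM(\Omega)$ yields \eqref{eq:diff_bang_bang_2}. From the factorisation $F''(u)\mu = G'(u)\adjoint\bigl(\kappa_u \, G'(u)\mu\bigr)$ with $\kappa_u \in L^\infty(\Omega)$ we obtain $F''(u)\mu \in C_0(\Omega)$, the bound \eqref{eq:diff_bang_bang_4}, and — using the strong $L^2$-convergence just established — the claim $F''(u)\mu_k \to F''(u)\mu$ in $C_0(\Omega)$ for $\mu_k \weaklystar \mu$; the bilinear form $F''(u)$ extends to $\MM(\Omega)$ by density and \eqref{eq:diff_bang_bang_4}.

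It remains to treat compactness of $F,G$ and the stability estimates. For $\{u_k\} \subset \Uad$ with $u_k \weaklystar u$ in $L^\infty(\Omega)$, the energy estimate and the Stampacchia bound show $y_k := G(u_k + p_1)$ is bounded in $H_0^1(\Omega) \cap L^\infty(\Omega)$ uniformly; extracting (Rellich) a subsequence with $y_k \to y_*$ in $L^2(\Omega)$ and passing to the limit in the weak form of \eqref{eq:pde} — using $f(\cdot, y_k) \to f(\cdot, y_*)$ in $L^2(\Omega)$ by continuity, the $L^\infty$-bound and dominated convergence — identifies $y_* = G(u + p_1)$, and uniqueness of the limit gives convergence of the whole sequence, whence $G(u_k + p_1) \weakly G(u + p_1)$ and, with the $L$-bounds of \cref{asm:on_f_L}, $F(u_k) \to F(u)$. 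For \eqref{eq:diff_bang_bang_1}, writing $z_i := G'(u_i')v$ with $u_1' := u_1 + p_1$, $u_2' := u_2$, $y_i := G(u_i')$, the difference solves $-\Delta(z_1 - z_2) + \partial_y f(\cdot,y_1)(z_1 - z_2) = \bigl(\partial_y f(\cdot, y_2) - \partial_y f(\cdot, y_1)\bigr)z_2$; since $\partial_y f$ is Lipschitz in $y$ (by the bound on $\partial_y^2 f$) and $\|y_1 - y_2\|_{L^\infty(\Omega)} \le C\,\|u_1' - u_2\|_{L^2(\Omega)}$ (Stampacchia applied to $y_1 - y_2$, whose semilinear term equals a nonnegative multiple of $y_1 - y_2$ by monotonicity), the right-hand side is controlled in $L^2(\Omega)$ by $C\,\|u_1' - u_2\|_{L^2(\Omega)}\|z_2\|_{L^2(\Omega)} \le C\,\|u_1' - u_2\|_{L^2(\Omega)}\|v\|_{L^1(\Omega)}$ via \eqref{eq:diff_bang_bang_2}, and $\|z_1 - z_2\|_{L^2(\Omega)} \le C\|z_1 - z_2\|_{H_0^1(\Omega)}$ finishes it. Finally \eqref{eq:diff_bang_bang_3} follows by splitting $F''(u_1')[v_1,v_2] - F''(u_2)[v_1,v_2]$ into a term with the kernel difference $\kappa_{u_1'} - \kappa_{u_2}$ — small in $L^\infty(\Omega)$ when $\|u_1' - u_2\|_{L^2(\Omega)}$ is small, since then $y_i$ and the adjoint states $\varphi_i$ are close in $L^\infty(\Omega)$ and $\partial_y^2 L$, $\partial_y^2 f$ are Lipschitz — times $\|G'(u_1')v_i\|_{L^2(\Omega)}$, plus a term estimated by \eqref{eq:diff_bang_bang_1}--\eqref{eq:diff_bang_bang_2}.

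I expect the main obstacle to be the uniform (over $u \in \Uad$) $L^\infty$- and $C_0$-regularity of the linearized and adjoint states on a domain with only Lipschitz boundary in dimension $d \le 3$, together with the transposition step that turns the $L^2 \to C_0$ mapping property of $G'(u)\adjoint$ into the $L^1 \to L^2$ boundedness and the compact $\MM(\Omega) \to L^2(\Omega)$ extension of $G'(u)$; once these are in place, the remaining assertions are obtained by the bookkeeping sketched above.
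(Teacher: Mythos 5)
Your proposal is correct and follows essentially the same route as the paper's proof: the adjoint-state representation of $F'$ and $F''$, elliptic regularity of the linearized and adjoint equations to obtain the $L^2(\Omega)\to C_0(\Omega)$ mapping property and, by transposition, the extension of $G'(u)$ to $\MM(\Omega)$ with the compactness/weak-$\star$ continuity properties, and the $L^1$-based stability estimates. The only notable difference is that the paper delegates \eqref{eq:diff_bang_bang_1}--\eqref{eq:diff_bang_bang_3} and the compactness of $F$ and $G$ to \cite{NguyenWachsmuth2017:1} and obtains \eqref{eq:diff_bang_bang_3} for general $v_1,v_2$ by polarization from the diagonal case proved there, whereas you re-derive these estimates directly from the kernel representation of $F''$.
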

\begin{proof}
The well-definedness and the differentiability of $G$ and $F$ follow  from standard arguments,
see, e.g., \cite[p.~2357]{Casas2012:1}.
In particular,
the derivatives of the reduced objective are given by
\begin{align}
	F'(u)v &= \int_\Omega \varphi_u v \, \d x \label{E2.5},
	\\
	F''(u)(v_1,v_2) &= \int_\Omega\Big[\frac{\partial^2L}{\partial
		y^2}(\cdot,G(u)) - \frac{\partial^2f}{\partial
	y^2}(\cdot,G(u))\,\varphi_u\Big]\,G'(u)v_1 \, G'(u)v_2 \, \d x,
	\label{E2.6}
\end{align}
where $\varphi_u \in H_0^1(\Omega) \cap C_0(\Omega)$ is the adjoint state associated with $u$,
	i.e., the weak solution of
	\begin{equation}
		\label{eq:adjoint_PDE}
		-\Delta\varphi_u +
		\frac{\partial f}{\partial y}(\cdot,y_u)\,\varphi_u  =
		\displaystyle\frac{\partial L}{\partial y}(\cdot,y_u)
		\text{ in } \Omega,
		\qquad
		\varphi_u = 0 \text{ on }\partial\Omega.
	\end{equation}
	This implies $F'(u) = \varphi_u \in C_0(\Omega)$.

	Next, we discuss the properties of $G'(u)$.
	For $v \in L^2(\Omega)$, the derivative
	$z_{u,v} := G'(u) \, v$ is the unique weak solution of the linearized equation
	\begin{equation}
		\label{eq:linearized_equation}
		-\Delta z_{u,v} +
		\frac{\partial f}{\partial y}(\cdot,y_u)\, z_{u,v}  =
		v
		\text{ in } \Omega,
		\qquad
		z_{u,v} = 0 \text{ on }\partial\Omega.
	\end{equation}
	It follows from classical arguments that this PDE with right-hand side $\mu \in \MM(\Omega)$ has a unique solution
	$z_{u,\mu} \in L^2(\Omega)$
	for every $\mu \in \MM(\Omega)$
	and that $\mu_n \weaklystar \mu$ in $\MM(\Omega)$
	implies $z_{u,\mu_n} \to z_{u,\mu}$ in $L^2(\Omega)$,
	see \cite[Section~2.5]{CasasWachsmuthWachsmuth2016:1}.
	Finally, $G'(u)\adjoint p \in C_0(\Omega)$ can be defined as the weak solution of \eqref{eq:linearized_equation}
	with right-hand side $p \in L^2(\Omega)$
	and the desired formula \eqref{eq:adjoint_linearized} follows easily.

	Now, from \eqref{eq:adjoint_linearized} and \eqref{E2.6} it can be seen that $F''(u)$ satisfies
	\begin{align*}
		F''(u)(v_1,v_2) &=
		\int_\Omega
		\zeta_{u,v_1} \, v_2 \, \d x,
	\end{align*}
	where
	\begin{equation*}
		\zeta_{u,v_1}
		=
		G'(u)\adjoint\biggh(){
			\Big[\frac{\partial^2L}{\partial y^2}(\cdot,G(u)) - \frac{\partial^2f}{\partial y^2}(\cdot,G(u))\,\varphi_u\Big]
			\,G'(u)v_1
		}.
	\end{equation*}
	Hence,
	$F''(u) \, \mu = \zeta_{u,\mu} \in C_0(\Omega)$ holds for all $\mu \in \MM(\Omega)$
	and
	$\mu_k \weaklystar \mu$
	implies $F''(u) \, \mu_k = \zeta_{u,\mu_k} \to \zeta_{u,\mu} = F''(u)\,\mu$ in $C_0(\Omega)$.

	The asserted compactness of $F$ and $G$ can be shown as in \cite[Theorem~4.1]{NguyenWachsmuth2017:1}.

	The estimates \eqref{eq:diff_bang_bang_1}--\eqref{eq:diff_bang_bang_2} are proven in \cite[Lemma~4.2]{NguyenWachsmuth2017:1}.
	Estimate \eqref{eq:diff_bang_bang_4} follows from \eqref{E2.6}, \eqref{eq:diff_bang_bang_2}
	and the fact that $G(u_1 + p_1)$ and $\varphi_{u_1+p_1}$ can be uniformly bounded in $L^\infty(\Omega)$,
	see \cite[Lemma~4.1]{NguyenWachsmuth2017:1}.
	In \cite[Lemma~4.3]{NguyenWachsmuth2017:1},
	\eqref{eq:diff_bang_bang_3} is shown for $v_1 = v_2$.
	The general case follows by polarization.
	Indeed, with $B := F''(u_1 + p_1) - F''(u_2)$, we obtain
	\begin{align*}
		\abs{B(v_1, v_2)}
		&=
		\frac14\bigabs{B(v_1 + v_2, v_1 + v_2) - B(v_1 - v_2, v_1 - v_2)}
		\\
		&\le \frac\varepsilon4 \bigh(){ \norm{v_1 + v_2}_{L^1(\Omega)}^2 + \norm{v_1 - v_2}_{L^1(\Omega)}^2}
		= \frac\varepsilon2 \bigh(){ \norm{v_1}_{L^1(\Omega)}^2 + \norm{v_2}_{L^1(\Omega)}^2 }.
	\end{align*}
	With scaling, we can resort to the case $\norm{v_1}_{L^1(\Omega)} = \norm{v_2}_{L^1(\Omega)} = 1$
	and this yields \eqref{eq:diff_bang_bang_3}.
\end{proof}
\begin{remark}
	\label{rem:bang_bang_more_applications}
	In the sequel, we will only work with the results of \cref{lem:diff_bang_bang}.
	This means that the following theory is applicable to all problems
	for which the same estimates are available.
	In particular, in all of the following results, \cref{asm:on_f_L}
	can be substituted by the assertions of \cref{lem:diff_bang_bang}.
\end{remark}

Next, we are going to apply the second-order theory of \cite{ChristofWachsmuth2017:1}
to the optimal control problem \eqref{eq:bang-bang}.
Therefore, let $\bar u \in \Uad$ be a stationary point of \eqref{eq:bang-bang},
i.e.,
$\bar \varphi := F'(\bar u) \in C_0(\Omega)$
satisfies
\begin{equation*}
	\scalarprod{\bar\varphi}{u - \bar u}_{L^2(\Omega)} \ge 0
	\qquad\forall u \in \Uad.
\end{equation*}
Note that, in the setting of the semilinear PDE \eqref{eq:pde}, $\bar\varphi$ is the solution of the adjoint equation
\eqref{eq:adjoint_PDE} with $y_u$ replaced by $\bar y = G(\bar u)$, see \eqref{E2.5}.

Let us introduce some notation in order to comply with the setting of
\cite{ChristofWachsmuth2017:1}
and of \cref{sec:abstract_analysis}.
We define the separable space
\begin{equation*}
	Y := C_0(\Omega) =  \mathrm{cl}_{\|.\|_\infty} \left ( C_c(\Omega) \right )
\end{equation*}
endowed with the usual supremum norm.
Its dual space can be identified with
\begin{equation*}
	X := \MM(\Omega)
\end{equation*}
which is the space of signed finite Radon measures on the domain $\Omega$
endowed with the norm $\|\mu\|_{\MM(\Omega)}:=|\mu|(\Omega)$, cf.\ \cite[Theorem 1.54]{Ambrosio}.
The space $L^1(\Omega)$ is identified with a closed subspace of $\MM(\Omega)$ via the isometric embedding $h \mapsto h \LL^d$, where $\LL^d$ is Lebesgue's measure.
In the same way,
$\Uad$ is considered as a subset of $\MM(\Omega)$.

\begin{assumption}[Assumptions for the Calculation of the Second Subderivative]
\label{assumption:bangbangsolution} 
We require
$\bar \varphi \in C_0(\Omega) \cap C^1(\Omega)$
and define
$\ZZ := \{z \in \Omega : \bar \varphi(z) = 0\}$.
We assume $\ZZ \subset \{z \in \Omega : \abs{\nabla \bar \varphi (z)} \neq 0 \}$.
Here and in the sequel, $\abs{\nabla \bar \varphi(z)}$
denotes the Euclidean norm of $\nabla \bar\varphi(z) \in \R^d$.
We further require
\begin{equation}
	\label{eq:measure_of_set}
	c
	:=
	\liminf_{s \searrow 0} \left ( \frac{s}{\LL^d(\{ |\bar\varphi| \leq s\})} \right )
	>
	0.
\end{equation}

\end{assumption}

Note that \eqref{eq:measure_of_set} is in particular satisfied (with $c \ge C^{-1}$) if
\begin{equation}
	\label{eq:measure_of_set_reloaded}
	\LL^d(\{\abs{\bar\varphi} \le s\}) \le C \, s
	\qquad\forall s > 0
\end{equation}
holds for some $C > 0$.
Such an assumption was previously used in, e.g., \cite{WachsmuthWachsmuth2011,DeckelnickHinze2012,CasasWachsmuthWachsmuth2016:1}.

We are now in the position to
study the second-order epi-differentiability of
the indicator function of $\Uad$.
In what follows, we denote by
$\HH^{d-1}$ the $(d-1)$-dimensional Hausdorff measure, which is scaled as in \cite[Definition 2.1]{EvansGariepy}.
\begin{theorem}
	\label{thm:curvature}
	Under \cref{assumption:bangbangsolution},
	the indicator function
	$j := \delta_{\Uad}$
	of $\Uad$
	is strictly twice epi-differentiable
	in $\bar u$ for $-\bar\varphi$
	with
	\begin{equation*}
		\KK\jxp = 
		\varh\{\}{
			g \mathcal{H}^{d-1}|_\ZZ \mid g \in L^1\left ( \ZZ,  \mathcal{H}^{d-1} \right) \cap   L^2 \left ( \ZZ, \abs{\nabla \bar \varphi} \mathcal{H}^{d-1} \right)
		}
	\end{equation*}
	and
	for every element $h = g \mathcal{H}^{d-1}|_\ZZ$ of the above set
	we have
	\begin{equation*}
		Q\jxp(h)
		=
		\frac{1}{2}  \int_{\ZZ}\  g^2 \abs{\nabla \bar \varphi } \mathrm{d}\mathcal{H}^{d-1}.
	\end{equation*}
\end{theorem}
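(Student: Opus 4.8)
The plan is to apply \cref{lem:obtain_twice_epi} with $x = \bar u$ and $g = -\bar\varphi$. As $\bar u$ is bang-bang and the stationarity of $\bar u$ is precisely the inclusion $-\bar\varphi \in \partial\delta_{\Uad}(\bar u)$, it yields $\bar u = -\sign(\bar\varphi)$ a.e.\ on $\Omega \setminus \ZZ$, and since \eqref{eq:measure_of_set} forces $\LL^d(\ZZ) = 0$, even $\bar u = -\sign(\bar\varphi)$ a.e.\ in $\Omega$. Consequently, a measure $z_n \in \MM(\Omega)$ satisfies $\bar u + t_n z_n \in \Uad$ if and only if $z_n$ is represented by an $L^\infty$-function with $z_n\,\sign(\bar\varphi) \in [0, 2/t_n]$ a.e., and in that case the second-order difference quotient of \cref{def:weak_star_subderivative} equals $\tfrac{2}{t_n}\int_\Omega \bar\varphi\, z_n\,\d x = \tfrac{2}{t_n}\int_\Omega |\bar\varphi|\,|z_n|\,\d x$, while for every other $z_n$ it equals $+\infty$. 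So the entire statement reduces to analysing this one family of constrained, nonnegatively weighted integrals. Throughout, I use that $\bar\varphi \in C^1(\Omega)$ together with $\nabla\bar\varphi \neq 0$ on $\ZZ$ makes $\ZZ$, via the implicit function theorem, a $C^1$-embedded hypersurface with locally finite $\mathcal{H}^{d-1}$-measure near which $|\nabla\bar\varphi|$ is locally bounded and bounded away from zero, so that on a collar of $\ZZ$ the coarea formula provides ``collar coordinates'' $x \leftrightarrow (x',\sigma)$ with $x' = \pi(x) \in \ZZ$ (the nearest-point projection), $\sigma = \bar\varphi(x)$, and $\d x = |\nabla\bar\varphi(x')|^{-1}(1 + o(1))\,\d\sigma\,\d\mathcal{H}^{d-1}(x')$ as $\sigma \to 0$.

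\textbf{The lower bound, which is the main difficulty.} The hard implication is: whenever $z_n \weaklystar z$, $t_n \searrow 0$ and $\Lambda := \liminf_n \tfrac{2}{t_n}\int_\Omega|\bar\varphi|\,|z_n|\,\d x < \infty$ (otherwise the difference quotient along this sequence is $+\infty$), then $z = g\,\mathcal{H}^{d-1}|_\ZZ$ for some $g \in L^1(\ZZ,\mathcal{H}^{d-1}) \cap L^2(\ZZ,|\nabla\bar\varphi|\mathcal{H}^{d-1})$ and $\Lambda \ge \tfrac12\int_\ZZ g^2\,|\nabla\bar\varphi|\,\d\mathcal{H}^{d-1}$. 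I pass to a subsequence realising $\Lambda$ along which all $z_n$ are feasible. Testing with $\phi \in C_0(\Omega)$ supported in $\{|\bar\varphi| \ge \varepsilon\}$ gives $|\langle z_n, \phi\rangle| \le \varepsilon^{-1}\|\phi\|_\infty\,\tfrac{t_n}{2}\bigl(\tfrac{2}{t_n}\int_\Omega|\bar\varphi|\,|z_n|\bigr) \to 0$, so $z$ is concentrated on $\ZZ$ and, moreover, the mass of $z_n$ on $\{|\bar\varphi| \ge \varepsilon\}$ is $O(t_n)$. Writing $z_n = z_n^+ - z_n^-$ with $z_n^\pm := \pm z_n\,\I_{\{\pm\bar\varphi > 0\}} \ge 0$ (disjoint supports, so the weighted integral splits and the weak-$\star$ limits give $z = z^+_\infty - z^-_\infty$), it suffices to treat $z_n^+$. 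Localising by a partition of unity subordinate to collar charts and fixing a small collar width $s > 0$, put $\mu_n(x') := \int_0^{s} z_n^+(x',\sigma)\,\d\sigma$; since $0 \le z_n^+ \le 2/t_n$ (hence $\mu_n \le 2s/t_n$), a bathtub/rearrangement estimate gives $\int_0^s \sigma\,z_n^+(x',\sigma)\,\d\sigma \ge \tfrac{t_n}{4}\mu_n(x')^2$, so that, up to a factor $1 + o_s(1)$ tending to $1$ as $s \searrow 0$ that absorbs the Jacobian and the collar truncation, $\tfrac{2}{t_n}\int_\Omega|\bar\varphi|\,z_n^+\,\d x \ge (1 + o_s(1))\,\tfrac12\int_\ZZ \bigl(\mu_n/|\nabla\bar\varphi|\bigr)^2\,|\nabla\bar\varphi|\,\d\mathcal{H}^{d-1}$; meanwhile $\int_\Omega z_n^+\phi\,\d x \to \langle z^+_\infty, \phi\rangle$, and $\int_\Omega z_n^+\phi\,\d x$ differs from $\int_\ZZ (\mu_n/|\nabla\bar\varphi|)\,\phi\,\d\mathcal{H}^{d-1}$ by an error that vanishes as $n \to \infty$ and then $s \searrow 0$. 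Hence $\mu_n/|\nabla\bar\varphi|$ is bounded in $L^2(\ZZ,|\nabla\bar\varphi|\mathcal{H}^{d-1})$ and, after passing to a subsequence, converges weakly there to a nonnegative function that, once $s$ is sent to $0$, is the $\mathcal{H}^{d-1}|_\ZZ$-density $g^+$ of $z^+_\infty$; weak lower semicontinuity of the $L^2(|\nabla\bar\varphi|\mathcal{H}^{d-1})$-norm then yields $\liminf_n\tfrac{2}{t_n}\int_\Omega|\bar\varphi|\,z_n^+ \ge \tfrac12\int_\ZZ (g^+)^2|\nabla\bar\varphi|\,\d\mathcal{H}^{d-1}$, and symmetrically for $z_n^-$. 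Reassembling the charts, adding the two contributions, and using that $g := g^+ - g^-$ with $g^\pm \ge 0$ forces $(g^+)^2 + (g^-)^2 \ge g^2$, proves the claim. In particular $\KK\jxp \subseteq \mathcal{C}$ and $Q\jxp \ge Q$ on $\mathcal{C}$ (condition (i) of \cref{lem:obtain_twice_epi}), where $\mathcal{C}$ is the cone in the statement and $Q(g\mathcal{H}^{d-1}|_\ZZ) := \tfrac12\int_\ZZ g^2|\nabla\bar\varphi|\,\d\mathcal{H}^{d-1}$.

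\textbf{Recovery sequences, density, conclusion.} For the matching upper bound I take $Z := \{g\,\mathcal{H}^{d-1}|_\ZZ \mid g \in C_c(\ZZ)\} \subseteq \mathcal{C}$ and, for $z = g\mathcal{H}^{d-1}|_\ZZ \in Z$ and any $t_n \searrow 0$, the ``saturated collar'' sequence $z_n := \tfrac{2}{t_n}\bigl(\I_{B_n^+} - \I_{B_n^-}\bigr)$, where $B_n^\pm := \{x : 0 < \pm\bar\varphi(x) < \tfrac{t_n}{2}|\nabla\bar\varphi(x)|\,(\pm g(\pi(x)))^+\}$. Then $\bar u + t_n z_n$ equals $\pm1$ on $B_n^\pm$ and $\bar u$ elsewhere, hence lies in $\Uad$, and the coarea computation above — now with collar thickness $\beta_n^\pm(x') = \tfrac{t_n}{2}|\nabla\bar\varphi(x')|\,g^\pm(x')$ — shows $z_n \weaklystar z$, $\|z_n\|_{\MM(\Omega)} \to \int_\ZZ|g|\,\d\mathcal{H}^{d-1} = \|z\|_{\MM(\Omega)}$, and $\tfrac{2}{t_n}\int_\Omega|\bar\varphi|\,|z_n|\,\d x = \tfrac{4}{t_n^2}\bigl(\int_{B_n^+}|\bar\varphi| + \int_{B_n^-}|\bar\varphi|\bigr) \to \tfrac12\int_\ZZ g^2|\nabla\bar\varphi|\,\d\mathcal{H}^{d-1} = Q(z)$, which is condition (ii) of \cref{lem:obtain_twice_epi} (and in particular shows $Z \subseteq \KK\jxp$, so $Q$ is indeed $[0,\infty)$-valued on $\KK\jxp$). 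Condition (iii) follows from the density of $C_c(\ZZ)$ in $L^1(\ZZ,\mathcal{H}^{d-1}) \cap L^2(\ZZ,|\nabla\bar\varphi|\mathcal{H}^{d-1})$: given $z = g\mathcal{H}^{d-1}|_\ZZ \in \mathcal{C}$, choose $g_k \in C_c(\ZZ)$ with $g_k \to g$ in both norms, so that $z_k := g_k\mathcal{H}^{d-1}|_\ZZ \in Z$ satisfies $z_k \weaklystar z$, $\|z_k\|_{\MM(\Omega)} \to \|z\|_{\MM(\Omega)}$ and $Q(z_k) \to Q(z)$. Then \cref{lem:obtain_twice_epi} gives that $\delta_{\Uad}$ is strictly twice epi-differentiable in $\bar u$ for $-\bar\varphi$ with $Q\jxp = Q$ on $\KK\jxp$; combined with the weak-$\star$ lower semicontinuity of $Q\jxp$ (\cref{lem:more_properties_of_Q}\,\ref{item:more_properties_of_Q_2}) and the density just shown, this forces $\KK\jxp = \mathcal{C}$, which is exactly the asserted formula. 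Strong — rather than strict — twice epi-differentiability cannot hold here, since every recovery sequence is absolutely continuous while its limit is carried by the $\LL^d$-null set $\ZZ$. The single genuine obstacle is the lower bound: extracting the precise surface density and the sharp quadratic constant from nothing but the boundedness of the constrained weighted integrals, which is where the coarea/collar and weak-$L^2$-compactness machinery is indispensable.
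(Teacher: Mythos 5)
Your proposal is correct in substance, but it takes a genuinely different route from the paper: the paper's own proof is essentially a reduction to the companion work \cite{ChristofWachsmuth2017:1} --- it observes that $Q\jxp$ is $+\infty$ off $\TT^\star_{\Uad}(\bar u)\cap\bar\varphi\anni$ and otherwise coincides with the ``curvature functional'' studied there, and then cites \cite[Theorem~6.11, Lemma~6.10]{ChristofWachsmuth2017:1} (combined with \cref{lem:obtain_twice_epi}) for the explicit formula, the domain, and the strict epi-differentiability. You instead prove everything directly, and your mechanism is the right one: the reduction of the difference quotient to $\tfrac{2}{t_n}\int|\bar\varphi|\,|z_n|$ over the constraint $0\le z_n\sign(\bar\varphi)\le 2/t_n$ is exactly correct, the bathtub/rearrangement bound $\int_0^s\sigma w\,\d\sigma\ge\mu^2/(2M)$ for $0\le w\le M$ is what produces the sharp constant $\tfrac12 g^2|\nabla\bar\varphi|$, the ``saturated collar'' sets $B_n^\pm$ are the natural recovery sequences and do satisfy $\|z_n\|_{\MM(\Omega)}\to\|z\|_{\MM(\Omega)}$ (so you correctly get \emph{strict} rather than strong epi-differentiability --- your remark that strong fails because recovery sequences are absolutely continuous while the limit lives on an $\LL^d$-null set is a nice bonus), and the closing step (density of $C_c(\ZZ)$ plus weak-$\star$ lower semicontinuity of $Q\jxp$ from \cref{lem:more_properties_of_Q}~\ref{item:more_properties_of_Q_2} to upgrade $\KK\jxp\supseteq Z$ to $\KK\jxp=\mathcal{C}$) is exactly how the domain identification has to go. What your version buys is transparency --- one sees where the factor $\tfrac12$ and the weight $|\nabla\bar\varphi|$ come from --- at the price of several technical debts that a full write-up must discharge and that the cited reference handles: local finiteness of $\mathcal{H}^{d-1}(\ZZ\cap K)$ and uniformity of the collar/coarea asymptotics on compact subsets; the fact that $\{|\bar\varphi|<\varepsilon\}$ is \emph{not} contained in a neighbourhood of $\ZZ$ because $\bar\varphi\in C_0(\Omega)$ vanishes towards $\partial\Omega$, so the mass of $z_n^\pm$ near the boundary must be discarded separately (harmless for the weak-$\star$ limit since test functions vanish there, and harmless for the lower bound since that contribution is nonnegative, but it needs to be said); and the rigorous identification of the fiberwise weak-$L^2$ limit of the masses $\mu_n$ with the $\mathcal{H}^{d-1}$-density of $z^\pm_\infty$, which requires the uniform-continuity/localization argument you only gesture at. None of these is a conceptual gap --- each is a routine (if tedious) verification --- so I regard the proposal as a correct, self-contained alternative to the paper's citation-based proof.
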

\begin{proof}
	In view of
	\begin{align*}
		Q\jxp(z)
		&=
		\inf
		\biggh\{\}{
			\liminf_{n \to \infty} \frac{j(\bar u + t_n \, z_n) - j(\bar u) + t_n \dual{\bar\varphi}{z_n}}{t_n^2/2}
			\mid
			t_n \searrow 0,
			z_n \weaklystar z
		}
		\\
		&=
		\inf
		\biggh\{\}{
			\liminf_{n \to \infty} \frac{\dual{\bar\varphi}{z_n}}{t_n/2}
			\mid
			t_n \searrow 0,
			z_n \weaklystar z,
			\bar u + t_n \, z_n \in \Uad
		}
		\\
		&=
		\inf
		\biggh\{\}{
			\liminf_{n \to \infty} \dual{\bar\varphi}{2 z / t_n + r_n}
			\mid
			t_n \searrow 0,
			r_n \weaklystar 0,
			\bar u + t_n \, z + \frac12 \, t_n^2 \, r_n \in \Uad
		},
	\end{align*}
	we find that $Q\jxp(z) = +\infty$ for $z \not\in \TT^\star_{\Uad}(\bar u) \cap \bar\varphi\anni$,
	where
	\begin{equation*}
		\TT_{\Uad}^{\star}(\bar u)
		:=
		\varh\{\}{ h \in \MM(\Omega) \mid \exists t_k \searrow 0, \exists u_k \in \Uad \text{ such that } \frac{u_k - \bar u}{t_k} \weaklystar h }.
	\end{equation*}
	Moreover, for all $z \in \TT^\star_{\Uad}(\bar u) \cap \bar\varphi\anni$
	the value
	$Q\jxp(z)$ coincides with the value of the curvature functional introduced in \cite{ChristofWachsmuth2017:1},
	see in particular the comment after \cite[Definition~3.1]{ChristofWachsmuth2017:1}.
	Now,
	\cite[Theorem~6.11]{ChristofWachsmuth2017:1}
	shows
	that the weak-$\star$ second subderivative of $j$
	together with its domain $\KK\jxp$
	is given as in the assertion of the theorem.

	The strict second-order epi-differentiability of $j$ in $(\bar u, -\bar\varphi)$
	follows
	from
	\cite[Lemma~6.10]{ChristofWachsmuth2017:1}
	combined with
	\cref{lem:obtain_twice_epi}, cf.\ the proof of \cite[Theorem~6.11]{ChristofWachsmuth2017:1}.
\end{proof}
For later use, we remark that the reduced critical cone
$\KK\jxp$ is a linear subspace of $\MM(\Omega)$.
Moreover, $Q\jxp$ is a quadratic functional on this subspace.
In particular, we can define the bilinear form associated with $Q\jxp$
via
\begin{equation*}
	\hat Q\jxp[h_1, h_2]
	=
	\frac{1}{2}  \int_{\ZZ}\  g_1 \, g_2 \abs{\nabla \bar \varphi } \mathrm{d}\mathcal{H}^{d-1}
	\in \R
\end{equation*}
for all
$h_1 = g_1 \mathcal{H}^{d-1}|_\ZZ \in \KK\jxp$
and
$h_2 = g_2 \mathcal{H}^{d-1}|_\ZZ \in \KK\jxp$.

Finally, in order to apply the second-order theory,
we have to verify \cite[Assumption~4.1]{ChristofWachsmuth2017:1}.
This amounts to the verification of
\begin{equation}
	\label{eq:hadamard_taylor_expansion}
	\lim_{k \to \infty}
	\frac{F(\bar u + t_k \, h_k) - F(\bar u) - t_k \, F'(\bar u) \, h_k - \frac12 \, t_k^2 \, F''(\bar u) \, h_k^2}{t_k^2}
	=
	0
\end{equation}
for all $\{h_k\} \subset L^\infty(\Omega)$,
$\{t_k\} \subset \mathbb{R}^+$ satisfying $t_k \searrow 0$,
$h_k \weaklystar h \in \MM(\Omega)$ and $\bar u + t_k h_k \in \Uad$.
In order to verify \eqref{eq:hadamard_taylor_expansion},
we use the Taylor expansion
\begin{align*}
	0
	&=
	F(\bar u + t_k \, h_k) - F(\bar u) - t_k \, F'(\bar u) \, h_k - \frac12 \, t_k^2 \, F''(u_k) \, h_k^2
	\\
	&=
	F(\bar u + t_k \, h_k) - F(\bar u) - t_k \, F'(\bar u) \, h_k
	- \frac12 \, t_k^2 \, F''(\bar u) \, h_k^2
	+ \frac12 \, t_k^2 \, \bigh[]{ F''(\bar u) \, h_k^2 -  F''(u_k) \, h_k^2}
\end{align*}
with $u_k = \bar u + \tau_k \, t_k \, h_k \in \Uad$ for some $\tau_k \in [0,1]$.
We recall that  $L^\infty(\Omega) \ni h_k \weaklystar h$ in $\MM(\Omega)$ implies that $h_k$ is bounded in $L^1(\Omega)$.
Using that $u_k - \bar u = \tau_k \, t_k \, h_k$
is a null sequence in $L^1(\Omega)$ and bounded in $L^\infty(\Omega)$,
we have $\norm{u_k - \bar u}_{L^2(\Omega)} \to 0$.
For $\varepsilon > 0$ by using \eqref{eq:diff_bang_bang_3}
we
obtain
\begin{equation*}
	\frac{\bigabs{
			F(\bar u + t_k \, h_k) - F(\bar u) - t_k \, F'(\bar u) \, h_k
			- \frac12 \, t_k^2 \, F''(\bar u) \, h_k^2
	}}{t_k^2}
	\le
	\varepsilon \, \norm{h_k}_{L^1(\Omega)}^2
\end{equation*}
for $k$ large enough.
Due to the boundedness of $h_k$ in $L^1(\Omega)$,
this implies \eqref{eq:hadamard_taylor_expansion}.

Now we can provide a second-order condition.
\begin{theorem}
	\label{thm:ssc}
	Under \cref{asm:on_f_L,assumption:bangbangsolution},
	the condition
	\begin{equation}
		\label{eq:explicitbangbang}
		F''(\bar u) h^2  + Q\jxp(h)
		>
		0
		\quad
		\forall  h \in  \KK\jxp \setminus \{0\}
	\end{equation}
	is equivalent to the quadratic growth condition
	\begin{equation}
	\label{eq:bangbanggrowth}
	F(u) \ge F(\bar u) + \frac{c}{2} \, \norm{ u - \bar u }_{L^1(\Omega)}^2\quad\forall u \in \Uad , \norm{u - \bar u}_{L^1(\Omega)} \le \varepsilon
	\end{equation}
	with constants $c>0$ and $\varepsilon > 0$.
	Further, \eqref{eq:bangbanggrowth}
	implies
	\begin{equation}
		\label{eq:explicitbangbang_coercive}
		F''(\bar u) h^2  + Q\jxp(h)
		\ge
		c \, \norm{h}^2_{\MM(\Omega)}
		\quad
		\forall  h \in  \KK\jxp
		.
	\end{equation}
\end{theorem}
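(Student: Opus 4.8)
The plan is to deduce all three assertions from the ingredients already in place --- the strict twice epi-differentiability of $j=\delta_{\Uad}$ in $\bar u$ for $-\bar\varphi$ together with the explicit formulas for $Q\jxp$ and $\KK\jxp$ from \cref{thm:curvature}, the Hadamard--Taylor expansion \eqref{eq:hadamard_taylor_expansion}, and the continuity and compactness properties of $F$, $F'$, $F''$ collected in \cref{lem:diff_bang_bang} --- following the no-gap second-order pattern of \cite{ChristofWachsmuth2017:1}. Since \eqref{eq:explicitbangbang_coercive} trivially implies \eqref{eq:explicitbangbang}, it suffices to prove (a) that \eqref{eq:bangbanggrowth} implies \eqref{eq:explicitbangbang_coercive} (which simultaneously gives the ``$\Leftarrow$''-part of the stated equivalence) and (b) that \eqref{eq:explicitbangbang} implies \eqref{eq:bangbanggrowth}. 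I would first record that the stationarity of $\bar u$ is equivalent to $-\bar\varphi\in\partial j(\bar u)$, so that $Q\jxp(0)=0$ by \cref{lem:basic_properties} and $\dual{\bar\varphi}{u-\bar u}\ge0$ for every $u\in\Uad$.

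For (a) I would fix $h\in\KK\jxp$ and a sequence $t_n\searrow0$, take the recovery sequence $z_n\weaklystar h$ with $\norm{z_n}_{\MM(\Omega)}\to\norm{h}_{\MM(\Omega)}$ furnished by the strict twice epi-differentiability of $j$ (\cref{thm:curvature,def:second_order_derivative}), so that $u_n:=\bar u+t_n z_n\in\Uad$ for all large $n$ and the second-order difference quotient of $j$ equals $2\,t_n^{-1}\dual{\bar\varphi}{z_n}\to Q\jxp(h)$. Noting that $\norm{u_n-\bar u}_{L^1(\Omega)}=t_n\norm{z_n}_{\MM(\Omega)}\to0$ via the isometric embedding $L^1(\Omega)\hookrightarrow\MM(\Omega)$, I would apply \eqref{eq:bangbanggrowth}, divide by $t_n^2/2$, and pass to the limit using \eqref{eq:hadamard_taylor_expansion}, the identity $F'(\bar u)=\bar\varphi$, and $F''(\bar u)z_n^2\to F''(\bar u)h^2$ (the latter because $F''(\bar u)z_n\to F''(\bar u)h$ in $C_0(\Omega)$ by \cref{lem:diff_bang_bang} while $z_n\weaklystar h$ in $\MM(\Omega)$). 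This yields $F''(\bar u)h^2+Q\jxp(h)\ge c\,\norm{h}_{\MM(\Omega)}^2$ with the same constant $c$ as in \eqref{eq:bangbanggrowth}.

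For (b) I would argue by contradiction. If \eqref{eq:bangbanggrowth} holds for no pair of constants, pick $u_k\in\Uad$ with $t_k:=\norm{u_k-\bar u}_{L^1(\Omega)}\searrow0$ and $F(u_k)-F(\bar u)<\tfrac1{2k}\,t_k^2$, set $z_k:=(u_k-\bar u)/t_k$ (so $\norm{z_k}_{\MM(\Omega)}=1$), and extract a weak-$\star$ convergent subsequence $z_k\weaklystar h$ by Banach--Alaoglu. Exactly as in (a), $2\,t_k^{-1}\dual{\bar\varphi}{z_k}$ is the second-order difference quotient of $j$ at $\bar u$ for $-\bar\varphi$ with data $(t_k,z_k)$, so $\liminf_k 2\,t_k^{-1}\dual{\bar\varphi}{z_k}\ge Q\jxp(h)$ by \cref{def:weak_star_subderivative}; combining this with \eqref{eq:hadamard_taylor_expansion}, $F'(\bar u)=\bar\varphi$, $F''(\bar u)z_k^2\to F''(\bar u)h^2$ and the choice of $u_k$ yields $\limsup_k 2\,t_k^{-1}\dual{\bar\varphi}{z_k}\le-F''(\bar u)h^2$, whence $h\in\KK\jxp$ and $F''(\bar u)h^2+Q\jxp(h)\le0$.

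The main obstacle --- and the only place where \eqref{eq:measure_of_set} is used --- is to exclude $h=0$. Here I would exploit the bang-bang structure: stationarity and $\bar u\in\{-1,1\}$ a.e.\ give $\dual{\bar\varphi}{u-\bar u}=\int_\Omega\abs{\bar\varphi}\,\abs{u-\bar u}\,\d x$, and a layer-cake estimate over the sublevel sets $\{\abs{\bar\varphi}\le s\}$, optimised in $s$ by means of \eqref{eq:measure_of_set}, produces a constant $c_1>0$ with $\dual{\bar\varphi}{u-\bar u}\ge c_1\,\norm{u-\bar u}_{L^1(\Omega)}^2$ for all $u\in\Uad$ close to $\bar u$ (cf.\ the arguments underlying the estimates in \cite{WachsmuthWachsmuth2011,CasasWachsmuthWachsmuth2016:1}). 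Then $2\,t_k^{-1}\dual{\bar\varphi}{z_k}=2\,t_k^{-2}\dual{\bar\varphi}{u_k-\bar u}\ge2c_1>0$ for large $k$, which together with the bound from (b) forces $-F''(\bar u)h^2\ge2c_1>0$ and hence $h\neq0$. Thus $h\in\KK\jxp\setminus\{0\}$ with $F''(\bar u)h^2+Q\jxp(h)\le0$, contradicting \eqref{eq:explicitbangbang}. I expect the layer-cake estimate and the careful handling of the $\liminf$/$\limsup$ inequalities squeezing $2\,t_k^{-1}\dual{\bar\varphi}{z_k}$ between $Q\jxp(h)$ and $-F''(\bar u)h^2$ to be the only genuinely nonroutine parts.
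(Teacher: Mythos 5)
Your argument is correct. The paper itself does not prove \cref{thm:ssc} from scratch: its entire proof is a citation of Theorems~4.3 and~6.12 of \cite{ChristofWachsmuth2017:1}, whose hypotheses were verified earlier in the section (the expansion \eqref{eq:hadamard_taylor_expansion}, \cref{thm:curvature}, and \cref{lem:diff_bang_bang}). What you have written is essentially a self-contained reconstruction of that cited proof, and it assembles exactly the right ingredients: the strictness of the epi-differentiability (so that $\norm{z_n}_{\MM(\Omega)}\to\norm{h}_{\MM(\Omega)}$ and the growth constant $c$ survives into \eqref{eq:explicitbangbang_coercive}), the complete continuity of $F''(\bar u)$ to pass $F''(\bar u)z_n^2\to F''(\bar u)h^2$ along weak-$\star$ convergent sequences, the $\liminf$ bound from \cref{def:weak_star_subderivative} versus the $\limsup$ bound from the Taylor expansion to squeeze $2t_k^{-1}\dual{\bar\varphi}{z_k}$ between $Q\jxp(h)$ and $-F''(\bar u)h^2$, and the first-order growth $\dual{\bar\varphi}{u-\bar u}\ge c_1\norm{u-\bar u}_{L^1(\Omega)}^2$ obtained from \eqref{eq:measure_of_set} (this is precisely Lemma~6.3 of \cite{ChristofWachsmuth2017:1}, which the paper also invokes in the proof of \cref{thm:lipschitz_solvability_KKT_bang_bang}) to rule out $h=0$. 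The only difference is presentational: the paper outsources the work, while you carry it out explicitly; your version has the merit of making visible exactly where strictness (as opposed to mere weak-$\star$ epi-differentiability) and the measure condition \eqref{eq:measure_of_set} enter.
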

\begin{proof}
	The result follows from
	\cite[Theorem~6.12]{ChristofWachsmuth2017:1}
	since all assumptions are verified in our situation,
	see also
	\cite[Theorem~4.3]{ChristofWachsmuth2017:1}.
\end{proof}
We mention that the appearance of $Q\jxp(h)$
in \eqref{eq:explicitbangbang} and \eqref{eq:explicitbangbang_coercive}
accounts for the curvature of the set $\Uad$
w.r.t.\ the weak-$\star$ topology of $\MM(\Omega)$.
It is surprising that $\Uad$ possesses curvature in this situation,
since it is well known that $\Uad$ is a polyhedric set in the
function spaces $L^p(\Omega)$, i.e., it does not possess any curvature in these stronger topologies.

In case that the growth condition \eqref{eq:bangbanggrowth}
is satisfied,
we expect that the solution $\bar u$ is stable
w.r.t.\ small perturbations of the objective $F$
and we are interested in its sensitivity properties.
To this end, we consider a class of perturbations as in
\cite{NguyenWachsmuth2017:1}.
We fix the perturbation space
\begin{equation*}
	P = L^2(\Omega) \times L^2(\Omega)
\end{equation*}
and for a perturbation $p = (p_1, p_2) \in P$,
we define the perturbed objective
\begin{equation*}
	J(p,u)
	=
	F(u + p_1) + \scalarprod{p_2}{G(u + p_1)}_{L^2(\Omega)}
	.
\end{equation*}
In what follows, we first
address the solvability of the perturbed problem and the stability of $\bar u$
w.r.t.\ perturbations.
\begin{theorem}
	\label{thm:lipschitz_solvability_bang_bang}
	Suppose that \cref{asm:on_f_L} and \eqref{eq:bangbanggrowth}
	hold with some $c, \varepsilon > 0$.
	Then, there is $\delta > 0$ such that for all $\norm{p}_P \le \delta$ the perturbed problem
	\begin{equation}
		\label{eq:perturbed}
		\text{Minimize } J(p, u) \text{ w.r.t.\ } u \in \Uad
	\end{equation}
	has a local solution $\bar u_p$ satisfying $\norm{\bar u_p  - \bar u}_{L^1(\Omega)} \le \varepsilon$
	and $J(p,\bar u_p) \le J(p, \bar u)$. Moreover, there exists a constant $L>0$ such that 
	for all $\bar u_p$ with the latter two properties, it holds
	\begin{equation*}
		\norm{ \bar u_p - \bar u}_{L^1(\Omega)}
		\le
		L \, \norm{p}_P.
	\end{equation*}
\end{theorem}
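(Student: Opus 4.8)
The plan is to combine the direct method of the calculus of variations with the quadratic growth condition \eqref{eq:bangbanggrowth} and the $L^1$--$L^2$ estimates collected in \cref{lem:diff_bang_bang}. First I would fix $\delta_0 > 0$ small enough that all assertions of \cref{lem:diff_bang_bang} are available for $\norm{p}_P \le \delta_0$, and for such $p$ consider the auxiliary problem of minimizing $u \mapsto J(p,u)$ over the set $\Uad \cap \{ u \mid \norm{u - \bar u}_{L^1(\Omega)} \le \varepsilon \}$. This set is nonempty (it contains $\bar u$), convex, bounded in $L^\infty(\Omega)$, and weakly-$\star$ sequentially closed there: $\Uad$ is itself weakly-$\star$ closed, and $u \mapsto \norm{u - \bar u}_{L^1(\Omega)}$ is weakly-$\star$ sequentially lower semicontinuous on bounded subsets of $L^\infty(\Omega)$ since it is the supremum of the weakly-$\star$ continuous functionals $u \mapsto \int_\Omega (u - \bar u)\,\phi\,\d x$ over $\phi \in C_c(\Omega)$ with $\norm{\phi}_{L^\infty(\Omega)} \le 1$. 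Hence this set is weakly-$\star$ sequentially compact by the theorem of Banach--Alaoglu, and since the compactness part of \cref{lem:diff_bang_bang} shows that $u \mapsto F(u + p_1)$ is weakly-$\star$ continuous on $\Uad$ and that $u_k \weaklystar u$ implies $G(u_k + p_1) \weakly G(u + p_1)$ in $L^2(\Omega)$, the functional $u \mapsto J(p,u)$ is weakly-$\star$ continuous there. Thus the auxiliary problem has a global minimizer $\bar u_p$, which by construction satisfies $\norm{\bar u_p - \bar u}_{L^1(\Omega)} \le \varepsilon$ and $J(p,\bar u_p) \le J(p,\bar u)$.

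Next I would establish the Lipschitz estimate for \emph{every} $\bar u_p \in \Uad$ enjoying these last two properties. Write $\omega := \norm{\bar u_p - \bar u}_{L^1(\Omega)} \le \varepsilon$; rearranging $J(p,\bar u_p) \le J(p,\bar u)$ gives
\[
F(\bar u_p + p_1) - F(\bar u + p_1) \le \scalarprod{p_2}{G(\bar u + p_1) - G(\bar u_p + p_1)}_{L^2(\Omega)} .
\]
Estimating the right-hand side by $\norm{p_2}_{L^2(\Omega)}\,\norm{G(\bar u + p_1) - G(\bar u_p + p_1)}_{L^2(\Omega)}$ and bounding the second factor, via the fundamental theorem of calculus along $t \mapsto \bar u + t(\bar u_p - \bar u) + p_1$ and the estimates \eqref{eq:diff_bang_bang_1}--\eqref{eq:diff_bang_bang_2} (applicable since $\bar u$, $\bar u_p$ and their convex combinations lie in $\Uad$), by $C\,\omega$, I obtain $F(\bar u_p + p_1) - F(\bar u + p_1) \le C\,\norm{p}_P\,\omega$. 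On the other hand, with $g(t) := F(\bar u + t(\bar u_p - \bar u) + p_1) - F(\bar u + t(\bar u_p - \bar u))$ one has $F(\bar u_p + p_1) - F(\bar u + p_1) = [F(\bar u_p) - F(\bar u)] + g(1) - g(0)$ and
\[
\biggabs{\int_0^1 \!\!\int_0^1 F''\bigl(\bar u + t(\bar u_p - \bar u) + s\,p_1\bigr)[p_1, \bar u_p - \bar u]\,\d s\,\d t} \le C\,\norm{p_1}_{L^1(\Omega)}\,\omega \le C\,\norm{p}_P\,\omega
\]
by \eqref{eq:diff_bang_bang_4}. Combining the two estimates yields $F(\bar u_p) - F(\bar u) \le C\,\norm{p}_P\,\omega$ with a constant $C$ independent of $p$ and $\bar u_p$. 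Since $\omega \le \varepsilon$, the growth condition \eqref{eq:bangbanggrowth} applies and gives $\tfrac{c}{2}\,\omega^2 \le C\,\norm{p}_P\,\omega$, whence $\norm{\bar u_p - \bar u}_{L^1(\Omega)} \le \frac{2C}{c}\,\norm{p}_P =: L\,\norm{p}_P$. This is precisely the ``moreover'' assertion.

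Finally I would shrink $\delta \in (0,\delta_0]$ so that $L\,\delta < \varepsilon$. Then the minimizer $\bar u_p$ from the first step satisfies $\norm{\bar u_p - \bar u}_{L^1(\Omega)} \le L\,\norm{p}_P < \varepsilon$, so the $L^1$-ball constraint in the auxiliary problem is strictly inactive at $\bar u_p$; consequently $\bar u_p$ minimizes $J(p,\cdot)$ over $\Uad$ in an $L^1(\Omega)$-neighbourhood (a fortiori in an $L^2(\Omega)$- or $L^\infty(\Omega)$-neighbourhood) of $\bar u_p$, i.e., $\bar u_p$ is a local solution of \eqref{eq:perturbed} with $\norm{\bar u_p - \bar u}_{L^1(\Omega)} \le \varepsilon$ and $J(p,\bar u_p) \le J(p,\bar u)$. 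The main work is the estimate in the second step: one has to transport the growth bound for $F$ around $\bar u$ through the shift $u \mapsto u + p_1$ and the extra linear term $\scalarprod{p_2}{G(\cdot + p_1)}_{L^2(\Omega)}$, which is exactly where the $L^1$--$L^2$ mapping properties \eqref{eq:diff_bang_bang_1}--\eqref{eq:diff_bang_bang_4} of $G$ and $F''$ are indispensable; the only other genuinely structural ingredient is the weak-$\star$ lower semicontinuity of $\norm{\cdot - \bar u}_{L^1(\Omega)}$ on $\Uad$ used to run the direct method.
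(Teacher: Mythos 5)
Your proposal is correct and follows essentially the same route as the paper: both arguments transport the quadratic growth condition \eqref{eq:bangbanggrowth} through the shift by $p_1$ and the extra term $\scalarprod{p_2}{G(\cdot+p_1)}_{L^2(\Omega)}$ using the $L^1$--$L^2$ estimates \eqref{eq:diff_bang_bang_1}--\eqref{eq:diff_bang_bang_4}, and obtain existence from weak-$\star$ compactness of the $\varepsilon$-ball in $\Uad$ together with the compactness of $F$ and $G$ from \cref{lem:obtain_twice_epi}'s companion \cref{lem:diff_bang_bang}. The only cosmetic differences are that you spell out the direct method explicitly and keep the perturbation estimate linear in $\omega=\norm{\bar u_p-\bar u}_{L^1(\Omega)}$ (yielding $L\sim C/c$ directly), whereas the paper establishes the uniform coercivity inequality $J(p,u)-J(p,\bar u)\ge \tfrac c4\norm{u-\bar u}_{L^1(\Omega)}^2 - C\norm{p}_P^2$ via Young's inequality and reads off both existence and the bound $L=\sqrt{4C/c}$ from it.
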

\begin{proof}
	Let us take some $\delta \le 1$
	and $p \in P$ with $\norm{p}_P \le \delta$.
	Using a Taylor expansion, we find
	\begin{equation*}
		J(p,u)
		=
		F(u + p_1)
		+
		\scalarprod{p_2}{G(u + p_1)}
		=
		F(u) + F'(u + \tau \, p_1) \, p_1
		+
		\scalarprod{p_2}{G(u + p_1)}
	\end{equation*}
	for all $u \in \Uad$ with a $\tau \in [0,1]$.
	We use an analogous expansion for $J(p, \bar u)$.
	Together with \eqref{eq:bangbanggrowth}, this yields
	\begin{align*}
		J(p, u) - J(p, \bar u)
		&=
		F(u) + F'(u + \tau \, p_1) \, p_1
		-
		F(\bar u) - F'(\bar u + \bar\tau \, p_1) \, p_1
		\\
		&\qquad
		+
		\scalarprod{p_2}{G(u + p_1) - G(\bar u + p_1)}
		\\
		&\ge
		\frac{c}{2} \, \norm{u - \bar u}_{L^1(\Omega)}^2
		-
		\bigabs{
		F'(u + \tau \, p_1) \, p_1
		- F'(\bar u + \bar\tau \, p_1) \, p_1
		}
		\\
		&\qquad
		-
		\norm{p_2}_{L^2(\Omega)} \, \norm{G(u + p_1) - G(\bar u + p_1)}_{L^2(\Omega)}
	\end{align*}
	for all $u \in \Uad$ with $\norm{u - \bar u}_{L^1(\Omega)} \le \varepsilon$.
	Now, we further suppose that $\delta$ is small enough such that \eqref{eq:diff_bang_bang_2} applies.
	This inequality yields
	\begin{equation*}
		\norm{G(u + p_1) - G(\bar u + p_1)}_{L^2(\Omega)}
		\le
		\norm{G'(\lambda \, u + (1-\lambda) \, \bar u + p_1)(u - \bar u)}_{L^2(\Omega)}
		\le
		C \, \norm{u - \bar u}_{L^1(\Omega)}
	\end{equation*}
	with some $\lambda \in [0,1]$.
	Using \eqref{eq:diff_bang_bang_4}, the term involving $F'$ gives
	\begin{align*}
		\bigabs{
		F'(u + \tau \, p_1) \, p_1
		- F'(\bar u + \bar\tau \, p_1) \, p_1
		}
		&=
		\bigabs{
			F''( \hat u) [ u - \bar u + (\tau - \bar\tau) \, p_1, p_1]
		}
		\\
		& \le
		C \, \norm{u - \bar u + (\tau - \bar\tau) \, p_1}_{L^1(\Omega)} \, \norm{p_1}_{L^2(\Omega)}.
	\end{align*}
	Together with the above estimates and Young's inequality
	we arrive at
	\begin{equation*}
		J(p,u) - J(p,\bar u)
		\ge
		\frac{c}{4}\,\norm{u - \bar u}_{L^1(\Omega)}^2
		-
		C \, \norm{p}_P^2.
	\end{equation*}
	Now, we further suppose that
	$\delta \le \varepsilon \sqrt{c / (4C)}$.
	For all $u \in \Uad$
	with $\norm{u - \bar u}_{L^1(\Omega)} \in (\sqrt{(4C)/c} \, \norm{p}, \varepsilon]$
	the above calculation shows
	$J(p,u) > J(p,\bar u)$.
	Hence, we can utilize the compactness of $F$ and $G$, see \cref{lem:diff_bang_bang},
	and a standard argument shows that the perturbed problem
	has at least one solution $\bar u_p$ in the $\varepsilon$-ball centered at $\bar u$.
	Moreover, all these solutions satisfy the desired inequalities with
	$L = \sqrt{(4C)/c}$.
\end{proof}
Similarly, we can provide a stability result for stationary points.
\begin{theorem}
	\label{thm:lipschitz_solvability_KKT_bang_bang}
	Suppose that {\renewcommand\crefpairconjunction{, }\cref{asm:on_f_L,assumption:bangbangsolution}} and \eqref{eq:bangbanggrowth}
	hold with some $c, \varepsilon > 0$.
	Then, there exist $\delta,\eta,L > 0$
	such that for all $\norm{p}_P \le \delta$ and all stationary points $\bar u_p$ for the perturbed problem
	\begin{equation*}
		\text{Minimize } J(p, u) \text{ w.r.t.\ } u \in \Uad
	\end{equation*}
	with 
	$\norm{\bar u_p - \bar u}_{L^1(\Omega)} \le \eta$
	we have
	\begin{equation*}
		\norm{ \bar u_p - \bar u}_{L^1(\Omega)}
		\le
		L \, \norm{p}_P.
	\end{equation*}
\end{theorem}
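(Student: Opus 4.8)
The plan is to play the stationarity condition for the perturbed point $\bar u_p$ against the stationarity condition for $\bar u$, and to close the resulting inequality by combining a first-order estimate that is built into \cref{assumption:bangbangsolution} with the second-order information contained in \eqref{eq:bangbanggrowth}.

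First I would set $\delta u := \bar u_p - \bar u$ and recall $\bar\varphi := F'(\bar u) \in C_0(\Omega)$. Stationarity of $\bar u$, i.e.\ $\scalarprod{\bar\varphi}{u - \bar u}_{L^2(\Omega)} \ge 0$ for all $u \in \Uad$, forces $\bar u = -1$ on $\{\bar\varphi > 0\}$ and $\bar u = 1$ on $\{\bar\varphi < 0\}$, so that $\bar\varphi\,(u-\bar u) = \abs{\bar\varphi}\,\abs{u-\bar u}$ holds pointwise a.e.\ and hence $\scalarprod{\bar\varphi}{u-\bar u}_{L^2(\Omega)} = \int_\Omega \abs{\bar\varphi}\,\abs{u-\bar u}\,\d x$ for every $u \in \Uad$. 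Splitting $\Omega$ at a level $s > 0$ into $\{\abs{\bar\varphi}\le s\}$ and $\{\abs{\bar\varphi}>s\}$, using $\abs{u-\bar u}\le 2$ together with the bound $\LL^d(\{\abs{\bar\varphi}\le s\}) \le 2 s/c$ coming from \eqref{eq:measure_of_set} for small $s$, and optimizing over $s$, I obtain constants $\kappa, \eta_1 > 0$ with
\begin{equation*}
	\scalarprod{\bar\varphi}{u - \bar u}_{L^2(\Omega)} \ge \kappa \, \norm{u - \bar u}_{L^1(\Omega)}^2 \qquad \forall u \in \Uad \text{ with } \norm{u - \bar u}_{L^1(\Omega)} \le \eta_1 .
\end{equation*}
Choosing $\eta \le \eta_1$, this applies in particular to $u = \bar u_p$.

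Next I would write the stationarity of $\bar u_p$ for the perturbed problem as $\scalarprod{\partial_u J(p,\bar u_p)}{u - \bar u_p}_{L^2(\Omega)} \ge 0$ for all $u \in \Uad$, with $\partial_u J(p,\bar u_p) = F'(\bar u_p + p_1) + G'(\bar u_p + p_1)\adjoint p_2$, and test it at $u = \bar u$. Estimating the $p_2$-term by $C\norm{p}_P\norm{\delta u}_{L^1(\Omega)}$ via \eqref{eq:diff_bang_bang_2}, expanding $F'(\bar u_p + p_1)$ around $\bar u$ along $\delta u + p_1$ with the $p_1$-contribution bounded by $C\norm{p}_P\norm{\delta u}_{L^1(\Omega)}$ via \eqref{eq:diff_bang_bang_4}, and replacing the second derivatives along the segment by $F''(\bar u)$ up to an error $\varepsilon_1\norm{\delta u}_{L^1(\Omega)}^2$ with the help of \eqref{eq:diff_bang_bang_3} (legitimate because $\norm{\delta u + p_1}_{L^2(\Omega)}$ is small once $\eta, \delta$ are small, since $\delta u$ is bounded in $L^\infty(\Omega)$), I arrive at
\begin{equation*}
	\scalarprod{\bar\varphi}{\delta u}_{L^2(\Omega)} + F''(\bar u)[\delta u, \delta u] \le \varepsilon_1 \, \norm{\delta u}_{L^1(\Omega)}^2 + C \, \norm{p}_P \, \norm{\delta u}_{L^1(\Omega)} .
\end{equation*}
On the other hand, combining \eqref{eq:bangbanggrowth} with the Hadamard Taylor expansion \eqref{eq:hadamard_taylor_expansion} (or again with \eqref{eq:diff_bang_bang_3}) gives, for $\norm{\delta u}_{L^1(\Omega)}$ small, the lower bound $\scalarprod{\bar\varphi}{\delta u}_{L^2(\Omega)} + \tfrac12 F''(\bar u)[\delta u, \delta u] \ge \tfrac c4\norm{\delta u}_{L^1(\Omega)}^2$. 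Subtracting the two relations, inserting the result back and finally absorbing the remaining negative part of $F''(\bar u)[\delta u,\delta u]$ by invoking the coercivity \eqref{eq:explicitbangbang_coercive} of $F''(\bar u) + Q\jxp$ on the reduced critical cone — applied to the weak-$\star$ accumulation points of $\delta u/\norm{\delta u}_{L^1(\Omega)}$ exactly as in the proof of \cref{thm:ssc}, i.e.\ through the curvature analysis of \cite{ChristofWachsmuth2017:1} — I am left with an estimate of the form $\theta\,\norm{\delta u}_{L^1(\Omega)}^2 \le C\,\norm{p}_P\,\norm{\delta u}_{L^1(\Omega)}$ with a constant $\theta > 0$ as soon as $\eta$ and $\delta$ are small enough. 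Cancelling one factor $\norm{\delta u}_{L^1(\Omega)}$ yields the claimed Lipschitz estimate with $L := C/\theta$.

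The main obstacle is the last step: as a bilinear form on $L^1(\Omega)$-neighbourhoods of the origin, $F''(\bar u)$ is generally indefinite, so the negative contribution $F''(\bar u)[\delta u,\delta u]$ cannot be dominated by $\scalarprod{\bar\varphi}{\delta u}_{L^2(\Omega)}$ or by $\norm{\delta u}_{L^1(\Omega)}^2$ by elementary means. Controlling it needs precisely the second-order machinery that underlies \cref{thm:ssc} — the curvature functional $Q\jxp$ of \cite{ChristofWachsmuth2017:1} together with the structural assumption \eqref{eq:measure_of_set} — which is the reason this statement comes after \cref{thm:ssc}. Once this is in place the argument is a routine variant of the proof of \cref{thm:lipschitz_solvability_bang_bang}; alternatively, and more economically, one may invoke the corresponding stability result for stationary points of \cite{NguyenWachsmuth2017:1}, whose hypotheses are all verified here by \cref{lem:diff_bang_bang,thm:ssc}.
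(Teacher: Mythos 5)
Your first two building blocks are sound and match ingredients of the paper's proof: the lower bound $\scalarprod{\bar\varphi}{u-\bar u}_{L^2(\Omega)} \ge \kappa\,\norm{u-\bar u}_{L^1(\Omega)}^2$ derived from \eqref{eq:measure_of_set} is exactly the first-order estimate the paper imports from \cite[Lemma~6.3]{ChristofWachsmuth2017:1}, and testing the perturbed stationarity at $u=\bar u$ with the estimates of \cref{lem:diff_bang_bang} correctly yields your inequality (call it (A)) $\scalarprod{\bar\varphi}{\delta u}_{L^2(\Omega)} + F''(\bar u)[\delta u,\delta u] \le \varepsilon_1\norm{\delta u}_{L^1(\Omega)}^2 + C\norm{p}_P\norm{\delta u}_{L^1(\Omega)}$. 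The gap is in the step you yourself flag as the obstacle, and it is not closed by what you propose. Testing the perturbed VI \emph{only} at $u=\bar u$ is genuinely insufficient: after normalizing by $t:=\norm{\delta u}_{L^1(\Omega)}$ and passing to a weak-$\star$ accumulation point $\mu$ of $\delta u/t$, inequality (A) yields only $\tfrac12 Q\jxp(\mu) + F''(\bar u)[\mu,\mu] \le 0$ (the factor $\tfrac12$ on the curvature term is unavoidable because $\scalarprod{\bar\varphi}{\delta u}/(t^2/2)$ is a second-order difference quotient of $\delta_{\Uad}$). Combined with the coercivity \eqref{eq:explicitbangbang_coercive}, $Q\jxp(\mu) + F''(\bar u)[\mu,\mu] \ge c\norm{\mu}_{\MM(\Omega)}^2$, this gives merely $Q\jxp(\mu) \ge 2c\norm{\mu}_{\MM(\Omega)}^2$ and does \emph{not} force $\mu=0$, so the negative part of $F''(\bar u)[\delta u,\delta u]$ is not absorbed. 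The paper's proof has to test the perturbed stationarity at $\bar u + t_k\hat\mu_k$ where $\hat\mu_k$ is a recovery sequence for an \emph{arbitrary} $\hat\mu \in \KK\jxp$, obtain the full linearized inequality $\tfrac12 Q\jxp(\mu) - \tfrac12 Q\jxp(\hat\mu) + F''(\bar u)[\mu-\hat\mu,\mu]\le 0$, and then exploit that $\KK\jxp$ is a linear subspace (choosing $\hat\mu = s\mu$ with $s$ on both sides of $1$) to upgrade this to $Q\jxp(\mu)+F''(\bar u)[\mu,\mu]\le 0$, which together with \eqref{eq:explicitbangbang} gives $\mu=0$; only then does your estimate (B) finish the argument, since $F''(\bar u)[\delta u/t,\delta u/t]\to 0$ by complete continuity and (A) then reads $\kappa \le \varepsilon_1 + C/k$.

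Two further points. First, your claimed endpoint, a direct quantitative bound $\theta\norm{\delta u}_{L^1(\Omega)}^2 \le C\norm{p}_P\norm{\delta u}_{L^1(\Omega)}$ with a uniform $\theta>0$, cannot be extracted from coercivity applied to weak-$\star$ accumulation points: the limit $\mu$ may well be zero (and indeed must be shown to be zero), in which case the coercivity is vacuous; the argument is inherently a contradiction argument in which the case $\mu=0$ is handled by the first-order bound, not by \eqref{eq:explicitbangbang_coercive}. Second, the suggested shortcut of invoking the stability result of \cite{NguyenWachsmuth2017:1} does not prove the theorem as stated: as remarked after \cref{thm:lipschitz_solvability_bang_bang}, the constraint qualification used there \emph{implies} the growth condition \eqref{eq:bangbanggrowth} but is strictly stronger, so that route would establish the conclusion only under a more restrictive hypothesis.
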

\begin{proof}
	We proceed by contradiction.
	This yields sequences
	$p_k \to 0$ in $P$,
	$\bar u_k \to \bar u$ in $L^1(\Omega)$
	such that
	$\bar u_k$ is a stationary point for the perturbed problem with $p = p_k$
	and
	$\norm{\bar u_k - \bar u}_{L^1(\Omega)} \ge k \, \norm{p_k}_P$.
	We set $t_k := \norm{\bar u_k - \bar u}_{L^1(\Omega)}$
	and w.l.o.g.\ $v_k := (\bar u_k - \bar u) / t_k \weaklystar \mu$ in $\MM(\Omega)$.
	For arbitrary $\hat\mu \in \KK\jxp$, let $\hat\mu_k$ be a recovery sequence,
	i.e., $\bar u + t_k \, \hat\mu_k \in \Uad$, $\hat\mu_k \weaklystar \hat\mu$ in $\MM(\Omega)$
	and $Q\jxp(\hat\mu) = \lim_{k\to\infty} \frac{\dual{\bar\varphi}{\hat\mu_k}}{t_k/2}$.
	Since $\bar u_k$ is stationary for $p_k$,
	we have
	\begin{equation*}
		\dual{J_u(p_k, \bar u_k)}{\bar u + t_k \, \hat\mu_k - \bar u_k} \ge 0.
	\end{equation*}
	Using \eqref{eq:diff_bang_bang_1} and \eqref{eq:diff_bang_bang_3},
	a tedious computation shows
	$\norm{J_u(p_k, \bar u_k) - J_u(0, \bar u_k)}_{C_0(\Omega)} \le C \, \norm{p_k}_P$
	for some $C > 0$ and all $k$ large enough.
	Note that the same estimate was derived in \cite[proof of Theorem~4.5]{NguyenWachsmuth2017:1} in the setting of the semilinear PDE.
	Using $J_u(0,\bar u_k) = F'(\bar u_k)$
	together with a Taylor expansion of $F'$
	and \eqref{eq:diff_bang_bang_3},
	this leads to
	\begin{equation*}
		0
		\le
		\dual{F'(\bar u)}{\bar u + t_k \, \hat \mu_k - \bar u_k}
		+
		F''(\bar u)[\bar u + t_k \, \hat \mu_k - \bar u_k, \bar u_k - \bar u]
		+
		\varepsilon_k \, t_k^2
		+
		\frac Ck \, t_k^2,
	\end{equation*}
	where $\varepsilon_k \to 0$.
	Dividing by $t_k^2$ yields
	\begin{equation}
		\label{eq:in_the_proof}
		\frac12 \, \frac{\dual{F'(\bar u)}{\frac{\bar u_k - \bar u}{t_k}}}{t_k/2}
		-
		\frac12 \, \frac{\dual{F'(\bar u)}{\hat\mu_k}}{t_k/2}
		+
		F''(\bar u) \, \biggh[]{\frac{\bar u_k - \bar u}{t_k} - \hat\mu_k, \frac{\bar u_k - \bar u}{t_k}}
		\le
		\varepsilon_k
		+
		\frac Ck.
	\end{equation}
	By passing to the limit $k \to \infty$, we obtain
	\begin{equation*}
		\frac12 \, Q\jxp(\mu) - \frac12 \, Q\jxp(\hat\mu)+ F''(\bar u) \, [\mu-\hat\mu,\mu] \le 0
		\qquad\forall \hat\mu \in \KK\jxp.
	\end{equation*}
	Since $\KK\jxp$ is a subspace,
	we can choose $\hat\mu = s \, \mu$ for $s \in (0,1) \cup (1,2)$.
	Dividing the above inequality by $s-1$ and passing to the limits $s \nearrow 1$ and $s \searrow 1$,
	this shows
	\begin{equation*}
		Q\jxp(\mu) + F''(\bar u) \, \mu^2 \le 0.
	\end{equation*}
	Now, \eqref{eq:explicitbangbang} implies $\mu = 0$.
	From \cite[Lemma~6.3]{ChristofWachsmuth2017:1}, we know that \eqref{eq:measure_of_set}
	implies
	\begin{equation*}
		\Bigdual{F'(\bar u)}{\frac{\bar u_k - \bar u}{t_k}}
		=
		\frac{\dual{F'(\bar u)}{\bar u_k - \bar u}}{t_k}
		\ge
		c \, \frac{\norm{\bar u_k - \bar u}^2}{t_k}
		=
		c \, t_k
	\end{equation*}
	for some $c > 0$.
	Using this information, we reconsider
	\eqref{eq:in_the_proof} with $\hat\mu_k \equiv 0$
	and
	together with $F''(\bar u)[(\bar u_k - \bar u)/t_k]^2 \to F''(\bar u) 0^2 = 0$
	this yields a contradiction.
\end{proof}
Note that, if we set $p = 0$, then the above theorem yields that $\bar u$ is the unique stationary point
for the unperturbed problem in $\Uad \cap B_\eta^{L^1(\Omega)}(\bar u)$.

We compare \cref{thm:lipschitz_solvability_bang_bang}
with \cite[Theorem~4.5]{NguyenWachsmuth2017:1}.
Therein, the authors used a relaxed version of the measure assumption \eqref{eq:measure_of_set_reloaded},
in which the right-hand side $C\,s$ is replaced by $C\,s^{\text{\ae}}$
for some $\text{\ae} > 0$.
However, their CQ \cite[Eq.~(3.22)]{NguyenWachsmuth2017:1}
(in conjunction with $\text{\ae} = 1$)
implies our growth condition,
see \cite[Theorem~3.1]{NguyenWachsmuth2017:1}.

The next theorem is the main theorem of this section.
\begin{theorem}
	\label{thm:sensitivity_bang_bang}
	Let \cref{asm:on_f_L,assumption:bangbangsolution} be satisfied.
	Suppose that \eqref{eq:bangbanggrowth} holds for some $c, \varepsilon > 0$.
	Let $q_t \to q$ in $P$ be given.
	For $t > 0$ small enough,
	denote by $\bar u_t$ a stationary point of \eqref{eq:perturbed} with perturbation $p = t \, q_t$
	satisfying $\bar u_t \to \bar u$ in $L^1(\Omega)$.
	Then, there is a measure $\mu \in \KK\jxp$ such that
	$(\bar u_t - \bar u)/t \weaklystar \mu$
	in $\MM(\Omega)$ as $t \searrow 0$.
	Moreover, $\mu$ is given by the unique solution in $\KK\jxp$ of
	\begin{equation}
		\label{eq:bang_bang_sensitivity_equation}
		\dual{J_{up}(0, \bar u) p + J_{uu}(0,\bar u) \, \mu}{\zeta}
		+
		\hat Q\jxp[\mu, \zeta]
		=0
		\qquad\forall \zeta \in \KK\jxp,
	\end{equation}
	and the mapping $p \mapsto \mu$ is linear.
\end{theorem}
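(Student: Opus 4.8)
The plan is to cast \eqref{eq:perturbed} into the abstract framework of \cref{sec:abstract_analysis} and to invoke \cref{thm:sufficient_abstract}. I would take $X := \MM(\Omega)$, $Y := C_0(\Omega)$, $P := L^2(\Omega)\times L^2(\Omega)$, $j := \delta_{\Uad}$, and
\[
	A(p, u) := J_u(p, u) = F'(u + p_1) + G'(u + p_1)\adjoint p_2 \in C_0(\Omega) = Y .
\]
Then \cref{asm:data} is satisfied, since $\MM(\Omega)$ is the dual of the separable space $C_0(\Omega)$, and for $u \in \Uad$ the stationarity condition $\dual{J_u(p,u)}{v - u} \ge 0$ for all $v \in \Uad$ is precisely the VI \eqref{eq:VI} with this data and parameter $p$. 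It therefore suffices to verify that the families $\{t q_t\}$ and $\{\bar u_t\}$ (with $\bar u_0 := \bar u$) satisfy \cref{assumption:sensitivity} with $p_0 = 0$ and that the linearized VI \eqref{eq:VI_derivative} is uniquely solvable. Item \ref{assumption:sensitivity:i} is assumed; item \ref{assumption:sensitivity:ii} holds because $\bar u \in S(0)$, $\bar u_t \in S(t q_t)$ by construction, and — since $\bar u_t \to \bar u$ in $L^1(\Omega)$ — \cref{thm:lipschitz_solvability_KKT_bang_bang} gives $\norm{\bar u_t - \bar u}_{L^1(\Omega)} \le L\,t\,\norm{q_t}_P \le L' t$ for all small $t$; as the $\MM(\Omega)$-distance of two elements of $\Uad$ equals their $L^1(\Omega)$-distance, \eqref{eq:LipschitzEstimate} follows.

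The verification of \cref{assumption:sensitivity} \ref{assumption:sensitivity:iii} is the technical heart, and I expect it to be the main obstacle. Here one sets $A_p := J_{up}(0, \bar u)$, so that $A_p q = F''(\bar u) q_1 + G'(\bar u)\adjoint q_2$, and $A_x := J_{uu}(0, \bar u) = F''(\bar u) \in \LL(\MM(\Omega), C_0(\Omega))$. The expansion \eqref{eq:taylor_A} must be established along $t \mapsto (t q_t, \bar u + t y_t) = (t q_t, \bar u_t) \in P \times \Uad$. The decisive point is that, although the difference quotients $y_t = (\bar u_t - \bar u)/t$ are in general unbounded in $L^2(\Omega)$, the base-point increments $\bar u_t - \bar u$ tend to $0$ in $L^2(\Omega)$: since $\bar u_t - \bar u \in \Uad - \Uad \subset L^\infty(\Omega)$ with $\norm{\bar u_t - \bar u}_{L^\infty(\Omega)} \le 2$, we have $\norm{\bar u_t - \bar u}_{L^2(\Omega)}^2 \le 2\,\norm{\bar u_t - \bar u}_{L^1(\Omega)} \to 0$, while $\norm{\bar u_t - \bar u}_{L^1(\Omega)}$ is of order $t$ by \eqref{eq:LipschitzEstimate}. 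Writing
\[
	A(t q_t, \bar u_t) - A(0, \bar u)
	=
	\bigh[]{F'(\bar u_t + t q_{t,1}) - F'(\bar u_t)}
	+ \bigh[]{F'(\bar u_t) - F'(\bar u)}
	+ t \, G'(\bar u_t + t q_{t,1})\adjoint q_{t,2}
\]
and inserting first-order Taylor expansions of $F'$ at $\bar u$, the remainders involve differences of $F''$ and of $G'(\cdot)\adjoint$ at base points converging to $\bar u$ in $L^2(\Omega)$; by the estimates \eqref{eq:diff_bang_bang_1} and \eqref{eq:diff_bang_bang_3} of \cref{lem:diff_bang_bang} (which bound such differences by the $L^1(\Omega)$-norms of their arguments, the smallness being controlled by the $L^2(\Omega)$-distance of the base points) together with the continuity of $u \mapsto G'(u)\adjoint$, one arrives at $A(t q_t, \bar u_t) = A(0,\bar u) + t\,A_x y_t + t\,A_p q_t + r(t)$ with $\norm{r(t)}_{C_0(\Omega)}/t \to 0$, using that $\norm{q_{t,1}}_{L^1(\Omega)}$ stays bounded and $\norm{\bar u_t - \bar u}_{L^1(\Omega)}$ is of order $t$.

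Next I would apply \cref{thm:sufficient_abstract} under its first condition: by \cref{thm:curvature}, $j = \delta_{\Uad}$ is strictly — hence weakly-$\star$ — twice epi-differentiable in $\bar u$ for $-\bar\varphi$, and by \cref{lem:diff_bang_bang} the operator $A_x = F''(\bar u)$ is weakly-$\star$ completely continuous ($\mu_k \weaklystar \mu$ in $\MM(\Omega)$ implies $F''(\bar u)\mu_k \to F''(\bar u)\mu$ in $C_0(\Omega)$). It remains to check that \eqref{eq:VI_derivative} has at most one solution. Since $\KK\jxp$ is a linear subspace of $\MM(\Omega)$ and $Q\jxp$ restricts to the nonnegative quadratic form $\hat Q\jxp[\cdot,\cdot]$ on it (see the remark following \cref{thm:curvature}), testing \eqref{eq:VI_derivative} with $z = y + s\,\zeta$, $\zeta \in \KK\jxp$, and letting $s \to 0^{\pm}$ shows that $y \in X$ solves \eqref{eq:VI_derivative} if and only if $y \in \KK\jxp$ and
\[
	\dual{A_p q + A_x y}{\zeta} + \hat Q\jxp[y, \zeta] = 0 \qquad \forall \zeta \in \KK\jxp ,
\]
which is exactly \eqref{eq:bang_bang_sensitivity_equation}. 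If $y_1, y_2 \in \KK\jxp$ both satisfy this equation, subtracting and testing with $\zeta = y_1 - y_2 \in \KK\jxp$ gives $F''(\bar u)(y_1 - y_2)^2 + Q\jxp(y_1 - y_2) = 0$, hence $y_1 = y_2$ by the coercivity estimate \eqref{eq:explicitbangbang_coercive}, which holds by \cref{thm:ssc} since \eqref{eq:bangbanggrowth} is assumed.

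Consequently, \cref{thm:sufficient_abstract} yields a unique $\mu \in \MM(\Omega)$ with $(\bar u_t - \bar u)/t \weaklystar \mu$ as $t \searrow 0$, and $\mu$ solves \eqref{eq:VI_derivative}; testing the latter with $z = 0$ and using $Q\jxp(0) = 0$ (valid by \cref{lem:basic_properties}, since $-\bar\varphi \in \partial\delta_{\Uad}(\bar u)$ by stationarity of $\bar u$) forces $\mu \in \KK\jxp$, so by the equivalence established above $\mu$ is the unique solution of \eqref{eq:bang_bang_sensitivity_equation}. Finally, the map sending a direction $q$ to the associated $\mu$ is linear: the left-hand side of \eqref{eq:bang_bang_sensitivity_equation} is linear in $(q, \mu)$, so for directions $q, q'$ with solutions $\mu, \mu'$ and scalars $\alpha,\beta$ the element $\alpha\,\mu + \beta\,\mu' \in \KK\jxp$ solves \eqref{eq:bang_bang_sensitivity_equation} for the direction $\alpha\,q + \beta\,q'$, and uniqueness gives $\mu(\alpha\,q + \beta\,q') = \alpha\,\mu(q) + \beta\,\mu(q')$.
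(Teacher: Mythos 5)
The proposal is correct and follows essentially the same route as the paper: it casts the stationarity conditions as the VI \eqref{eq:VI} with $A = J_u$, $j = \delta_{\Uad}$ in $X = \MM(\Omega)$, verifies \cref{assumption:sensitivity} via \cref{thm:lipschitz_solvability_KKT_bang_bang} and the estimates of \cref{lem:diff_bang_bang}, invokes \cref{thm:sufficient_abstract}\ref{item:sufficient_abstract_1} using \cref{thm:curvature}, and identifies \eqref{eq:VI_derivative} with \eqref{eq:bang_bang_sensitivity_equation} by the same $z = \mu \pm s\,\zeta$ argument, with uniqueness from the second-order condition. The only (harmless) additions are the explicit linearity argument at the end and a slightly rearranged decomposition of the residual $r(t)$.
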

\begin{proof}
	In a first step,
	we use the first-order necessary conditions
	to identify the optimizers $\bar u$ and $\bar u_t$ with
	a solution of a variational inequality.
	To this end,
	we use the definitions
	\begin{align*}
		A(p, u) := J_u(p, u),
		\qquad
		j  := \delta_{\Uad}.
	\end{align*}
	In particular, for $h \in \MM(\Omega)$ we have
	\begin{equation*}
		A(p, u) \, h
		=
		J_u(p, u) \, h
		=
		F'(u + p_1) \, h
		+
		\scalarprod{p_2}{G'(u + p_1) \, h}_{L^2(\Omega)}
		.
	\end{equation*}
	Now, the VI \eqref{eq:VI} is equivalent to
	\begin{equation*}
		\bar u_p \in \Uad,
		\qquad
		\dual{J_u(p,\bar u_p)}{v - \bar u_p} \ge 0
		\quad
		\forall v \in \Uad.
	\end{equation*}
	That is, every stationary point of the perturbed problem \eqref{eq:perturbed}
	is a solution of this VI.

	Let us check that the assumptions of \cref{thm:sufficient_abstract} are satisfied.
	The standing \cref{asm:data} is fulfilled by the above choices of
	$X = \MM(\Omega)$, $Y = C_0(\Omega)$, $P = L^2(\Omega)^2$,
	and $A : P \times X \to Y$.
	
	Part~\ref{assumption:sensitivity:i} of \cref{assumption:sensitivity} 
	is exactly our requirement $q_t \to q$ in $P$.
	Since $\bar u_t$ is a local solution of \eqref{eq:perturbed},
	it is a solution of the VI \eqref{eq:VI} with parameter $p = t \, q_t$
	and the Lipschitz estimate \eqref{eq:LipschitzEstimate}
	in
	\cref{assumption:sensitivity}~\ref{assumption:sensitivity:ii}
	follows from \cref{thm:lipschitz_solvability_KKT_bang_bang}.

	Finally, it remains to check
	the differentiability assumption
	\cref{assumption:sensitivity}~\ref{assumption:sensitivity:iii}.
	To this end,
	we define
	\begin{align*}
		a_0 &:= -J_u(0, \bar u), &
		\dual{A_p p}{h} &:= F''(\bar u)[h, p_1] + \scalarprod{p_2}{G'(\bar u) \, h}_{L^2(\Omega)}, \\
		&&
		\dual{A_x v}{h} &:= F''(\bar u)[h, v]
	\end{align*}
	for all $v, h \in \MM(\Omega)$ and $p \in P$.
	Note that these mappings satisfy the mapping properties of
	\cref{assumption:sensitivity}~\ref{assumption:sensitivity:iii},
	see \cref{lem:diff_bang_bang}.
	Moreover, we define the difference quotient
	\begin{equation*}
		v_t
		:=
		\frac{\bar u - \bar u_t}{t}
		\in
		L^\infty(\Omega)
	\end{equation*}
	which is bounded in $L^1(\Omega)$.
	Now, the residual $r(t) \in Y$ from \eqref{eq:taylor_A}
	is given by
	\begin{align*}
		\dual{r(t)}{h} &:=
		A(t \, q_t, \bar u + t \, v_t) \, h
		-
		A(0, \bar u) \, h
		-
		t \, \dual{A_p q_t}{h}
		-
		t \, \dual{A_x v_t}{h}
		\\&
		=
		F'(\bar u + t \, (q_{t,1} + v_t)) \, h - F'(\bar u) \, h
		- t \, F''(\bar u) [h, q_{t,1} + v_t]
		\\&\qquad
		+ \bigscalarprod{t \, q_{t,2}}{G'(\bar u + t \, (q_{t,1} + v_t)) \, h}_{L^2(\Omega)}
		- t \, \bigscalarprod{q_{t,2}}{G'(\bar u) \, h}_{L^2(\Omega)}
		,
	\end{align*}
	where $h \in X$.
	In order to obtain an estimate of
	$\norm{r(t)}_Y$, it is sufficient to test $r(t) \in Y = C_0(\Omega)$
	with functions $h \in L^1(\Omega)$.
	Using a Taylor expansion of $F$, we find
	\begin{align*}
		t^{-1} \, \dual{r(t)}{h}
		&=
		F''\bigh(){ \bar u + \tau_t \, t \, (q_{t,1} + v_t)}[h, q_{t,1} + v_t]
		-
		F''(\bar u)[h, q_{t,1} + v_t]
		\\&\qquad
		+
		\bigscalarprod{q_{t,2}}{G'(\bar u + t \, (q_{t,1} + v_t)) \, h - G'(\bar u) \, h}_{L^2(\Omega)}
		\\
		&=
		\Bigh[]{F''\bigh(){ \bar u + \tau_t \, t \, (q_{t,1} + v_t)} - F''(\bar u)}[h, q_{t,1} + v_t]
		\\&\qquad
		+
		\bigscalarprod{q_{t,2}}{G'(\bar u + t \, (q_{t,1} + v_t)) \, h - G'(\bar u) \, h}_{L^2(\Omega)},
	\end{align*}
	where $\tau_t \in [0,1]$.
	Now, we are going to use the estimates \eqref{eq:diff_bang_bang_1} and \eqref{eq:diff_bang_bang_3}.
	Therefore,
	we utilize
	$\bar u + \tau_t \, t \, (q_{t,1} + v_t) \to \bar u$ in $L^2(\Omega)$.
	In particular, for every $\varepsilon > 0$,
	there is $\hat t > 0$,
	such that we can apply \eqref{eq:diff_bang_bang_3} with
	$u_1 = \bar u + \tau_t \, t \, v_t$,
	$u_2 = \bar u$,
	and $p_1 = \tau_t \, t \, q_{t,1}$.
	This leads to the estimate
	\begin{equation*}
		t^{-1} \, \bigabs{ \dual{r(t)}{h} }
		\le
		\varepsilon \, \norm{h}_{L^1(\Omega)} \, \norm{q_{t,1} + v_t}_{L^1(\Omega)}
		+
		C \, \norm{q_{t,2}}_{L^2(\Omega)} \, \norm{\tau_t \, t \, (q_{t,1} + v_t)}_{L^2(\Omega)} \, \norm{h}_{L^1(\Omega)}
		.
	\end{equation*}
	Taking the supremum w.r.t.\ all $h \in L^1(\Omega)$
	with $\norm{h}_{L^1(\Omega)} \le 1$
	leads to the desired estimate
	$\norm{r(t)}_Y = \oo(t)$.
	Hence,
	\cref{assumption:sensitivity}
	holds.

	Next, we check that
	the requirement \ref{item:sufficient_abstract_1}
	of \cref{thm:sufficient_abstract} is satisfied.
	The \mbox{weak-$\star$} second-order epi-differentiability of $j = \delta_{\Uad}$
	was proved in \cref{thm:curvature}.
	The compactness assumption on $A_x = F''(\bar u)$
	was shown in \cref{lem:diff_bang_bang}.

	Finally, we study the linearized VI \eqref{eq:VI_derivative}.
	In our setting, it reads
	\begin{equation}
		\label{eq:linearized_VI_bang_bang}
		\dual{J_{up}(0, \bar u) p + J_{uu}(0,\bar u) \, \mu}{\zeta - \mu}
		+
		\frac12 Q\jxp(\zeta)
		-
		\frac12 Q\jxp(\mu)
		\ge
		0
		\qquad\forall \zeta \in \KK\jxp,
	\end{equation}
	where $J_{up}$, $J_{uu}$ denote the second partial derivatives of $J$.
	Thus, it remains to show that this linearized VI
	has at most one solution
	and that this solution is given by the solution of \eqref{eq:bang_bang_sensitivity_equation}.
	The condition \eqref{eq:explicitbangbang} implies that \eqref{eq:bang_bang_sensitivity_equation}
	possesses at most one solution.
	Hence, it is sufficient to prove that every solution $\mu \in \KK\jxp$
	of \eqref{eq:linearized_VI_bang_bang} also solves \eqref{eq:bang_bang_sensitivity_equation}.
	By using $\hat\zeta = \mu \pm t \, \zeta$
	and
	\begin{equation*}
		\frac12 \, Q\jxp\bigh(){\mu \pm t \, \zeta} - \frac12 \, Q\jxp(\mu)
		=
		\pm t \, \hat Q\jxp[\mu, \zeta] + \frac{t^2}{2} \, Q\jxp(\zeta),
	\end{equation*}
	this follows immediately.
	Applying \cref{thm:sufficient_abstract} finishes the proof.
\end{proof}

Note that a sequence $\bar u_t$ of stationary points with the properties in \cref{thm:sensitivity_bang_bang} can always be found by \cref{thm:lipschitz_solvability_bang_bang}.

We remark that differentiability results
for bang-bang optimal control problems governed by
ordinary differential equations can be found frequently in the literature.
See, e.g., \cite{Felgenhauer2010} and \cite{KimMaurer2003} for some examples. 
The result in \cref{thm:sensitivity_bang_bang}, however,
seems to be new and is, at least in the authors' opinion,
quite remarkable as it allows to precisely track how the 
sensitivity of a bang-bang solution $\bar u$ 
is related to the curvature properties of the set $\Uad  \subset \MM(\Omega)$
and the no-gap optimality condition in \cref{thm:ssc}.

\section{Concluding Remarks}
\label{sec:conclusion}

As we have demonstrated in \cref{sec:examples,sec:elliptic,sec:tangible_corollary},
our approach to the sensitivity analysis of variational 
inequalities 
allows not only to recover and extend known results
as those of \cite{Mignot1976,Haraux1977} and \cite{Do1992},
cf.\ \cref{cor:extendedpolyhedric} and \cref{th:elliptic},
but also to tackle problems that 
are beyond the scope of the classical theory, cf.\ the examples 
in \cref{subsec:bang_bang,subsec:prox_regular,subsec:plasti}. We hope that, because of the generality of 
our theorems and the self-containedness and elementary nature
of our proofs, our results will prove helpful 
to all those who are interested in the differentiability properties 
of solution operators to VIs of the first and the second kind
and the optimal control of variational inequalities. 

\ifbiber
	\renewcommand*{\bibfont}{\small}
	\printbibliography
\else
	\bibliographystyle{plainnat}
	\bibliography{references}
\fi
\end{document}